\numberwithin{equation}{section}
\chardef\bslash=`\\ % p. 424, TeXbook
\def\verbatim{\interlinepenalty\@M \@verbatim
  \leftskip\@totalleftmargin\advance\leftskip2pc
  \frenchspacing\@vobeyspaces \@xverbatim}
\newtheorem{theorem}{Theorem}[section]
\newtheorem{corollary}[theorem]{Corollary}
\newtheorem{lemma}[theorem]{Lemma}
\newtheorem{proposition}[theorem]{Proposition}
\theoremstyle{definition}
\newtheorem{definition}[theorem]{Definition}
\newtheorem{remark}[theorem]{Remark}
\newcounter{picture}
\DeclareMathOperator{\conv}{conv}
\newcommand{\CC}{{\mathbb C}}
\newcommand{\KK}{{\mathbb K}}
\newcommand{\NN}{{\mathbb N}}
\newcommand{\QQ}{{\mathbb Q}}
\newcommand{\TT}{{\mathbb T}}
\newcommand{\ZZ}{{\mathbb Z}}
\newcommand{\cA}{{\mathcal A}}
\newcommand{\cB}{{\mathcal B}}
\newcommand{\cC}{{\mathcal C}}
\newcommand{\cE}{{\mathcal E}}
\newcommand{\cF}{{\mathcal F}}
\newcommand{\cH}{{\mathcal H}}
\newcommand{\cK}{{\mathcal K}}
\newcommand{\cO}{{\mathcal O}}
\newcommand{\D}{{\Delta}}
\newcommand{\dd}{{{\delta}'}}
\newcommand{\G}{{\Gamma}}
\newcommand{\Om}{{\Omega}}
\newcommand{\g}{{\gamma}}
\newcommand{\s}{{\sigma}}
\newcommand{\fa}{{\mathfrak a}}
\newcommand{\fp}{{\mathfrak p}}
\newcommand{\fs}{{\mathfrak s}}
\newcommand{\ft}{{\mathfrak t}}
\newcommand{\fG}{{\mathfrak G}}
\newcommand{\fP}{{\mathfrak P}}
\newcommand{\fS}{{\mathfrak S}}
\newcommand{\fT}{{\mathfrak T}}
\newcommand{\fW}{{\mathfrak W}}
\newcommand{\ofW}{{\overline{\mathfrak W}}}
\newcommand{\oa}{{\overline{\alpha}}}
\newcommand{\ou}{{\overline u}}
\newcommand{\ov}{{\overline v}}
\newcommand{\ow}{{\overline w}}
\newcommand{\oW}{{\overline W}}
\newcommand{\zomatrix}{{$\{0,1\}$-matrix}}
\newcommand{\zomatrices}{{$\{0,1\}$-matrices}}
\newcommand{\Ind}{{\bf 1}} %indicator function
\newcommand{\PGL}{{\text{\rm{PGL}}}}
\newcommand{\SL}{{\text{\rm{SL}}}}
\newcommand{\wt}{\widetilde}
\begin{document}

\title[]{Affine buildings, tiling systems and higher rank Cuntz-Krieger algebras}

\date{February 2, 1999}
\author{Guyan Robertson }
\address{Mathematics Department, University of Newcastle, Callaghan, NSW
2308, Australia}
\email{guyan@maths.newcastle.edu.au}
\author{Tim Steger}
\address{Istituto Di Matematica e Fisica, Universit\`a degli Studi di
Sassari, Via Vienna 2, 07100 Sassari, Italia}
\email{steger@ssmain.uniss.it}
\subjclass{Primary 46L35; secondary 46L55, 22D25, 51E24.}
\keywords{$C^*$-algebra, subshift of finite type, affine building}
\thanks{This research was supported by the Australian Research Council.}
\thanks{ \hfill Typeset by  \AmS-\LaTeX}

\begin{abstract}
To an $r$-dimensional subshift of finite type satisfying certain special properties
we associate a $C^*$-algebra $\cA$. This algebra
is a higher rank version of a Cuntz-Krieger algebra. In particular, it
is simple, purely infinite and nuclear. We study an example: if $\G$ is a group
acting freely on the vertices of an $\wt A_2$ building, with finitely many orbits, and
if $\Omega$ is the boundary of that building, then $C(\Om)\rtimes \G$ is the algebra
associated to a certain two dimensional subshift.
\end{abstract}

\maketitle
\section*{Introduction}

This paper falls into two parts. The self-contained first part develops
the theory of a class of $C^*$-algebras which are higher rank generalizations of the  Cuntz-Krieger
algebras \cite{ck,c,c'}. We start with a set of $r$-dimensional words, based on an alphabet $A$,
we define transition matrices $M_j$ in each of $r$ directions, satisfying certain
conditions (H0)-(H3). The $C^*$-algebra $\cA$ is then the unique $C^*$-algebra
generated by a family of partial isometries $s_{u,v}$
indexed by compatible $r$-dimensional words $u,v$ and satisfying
relations (\ref{rel1*}) below. If $r=1$ then $\cA$ is a Cuntz--Krieger algebra.
We prove that the algebra $\cA$ is simple, purely infinite and stably isomorphic to
the crossed product of an AF-algebra by a $\ZZ^r$-action.

The last part of the paper (Section~\ref{boundary-algebra}) studies in detail one particularly
interesting example. This example was the authors' motivation for introducing these algebras.
Let $\cB$ be an affine building of type~$\wt A_2$.
Let $\G$ be a group of type rotating automorphisms of $\cB$ which acts freely on the vertex set
with finitely many orbits.  There is a natural action of $\G$ on the boundary $\Omega$
of $\cB$, and we can form the universal crossed product algebra $C(\Om)\rtimes \G$.
This algebra is isomorphic to an algebra of the form $\cA$ obtained by the preceding construction.
In the case where $\G$ also acts transitively on the vertices of $\cB$
the algebra $C(\Om)\rtimes \G$ was previously studied in \cite{rs}, where simplicity was proved.
In a sequel to this paper \cite{rs'} we explicitly compute the K-theory of some of these algebras.

In \cite{s} \textsc{J. Spielberg} treated an analogous example in rank $1$: $\G$ is a free group,
the building is a tree, and $C(\Om)\rtimes \G$ is isomorphic to an ordinary Cuntz-Krieger algebra.
In fact \cite{s} deals more generally with the case where $\G$ is a free product of cyclic groups.
The generalizations in this paper are motivated by Spielberg's work.

We now introduce some basic notation and terminology.
Let $\ZZ_+$ denote the set of nonnegative integers.
Let $[m,n]$ denote $\{m,m+1, \dots , n\}$, where $m \le n$ are integers. If $m,n \in \ZZ^r$,
say that $m \le n$ if $m_j \le n_j$ for $1 \le j \le r$, and when $m \le n$, let
$[m,n] = [m_1,n_1] \times \dots \times [m_r,n_r]$. In $\ZZ^r$, let $0$ denote the zero vector
and let $e_j$ denote the $j^{th}$ standard unit basis vector.  We fix a finite set $A$ (an ``alphabet'').

A \emph{$\{0,1\}$-matrix} is a matrix with entries in $\{0,1\}$.
Choose nonzero  $\{0,1\}$-matrices  $M_1, M_2,\dots, M_r$ and denote their elements by
 $M_j(b,a) \in \{0,1\}$ for $a,b \in A$.
If $m,n \in \ZZ^r$ with $m \le n$, let
$$W_{[m,n]} = \{ w: [m,n] \to A ;\ M_j(w(l+e_j),w(l)) = 1\ \text{whenever}\ l,l+e_j\in [m,n] \}.$$
Put $W_m=W_{[0,m]}$ if $m \ge 0$.
Say that an element $w \in W_m$ has \emph{ shape} $m$, and write $\s(w) = m$.
Thus $W_m$ is the set of words of shape $m$, and we identify $A$ with $W_0$ in the
natural way.
Define the initial and final maps $o: W_m \to A$ and $t: W_m \to A$ by
$o(w) = w(0)$ and $t(w) = w(m)$.
Fix a nonempty finite or countable set $D$ (whose elements are ``decorations''),
and a map $\delta : D \to A$.
Let $\overline W_m = \{ (d,w) \in D \times W_m ;\ o(w) = \delta (d) \}$, the set of
``decorated words'' of shape $m$, and identify
$D$ with $\overline W_0$ via the map $d \mapsto (d,\delta(d))$.
Let $W = \bigcup_m W_m$ and  $\overline W = \bigcup_m \overline W_m$, the sets of all
words and all decorated words respectively.
Define $o: \overline W_m \to D$ and $t: \overline W_m \to A$ by
$o(d,w) = d$ and $t(d,w) = t(w)$. Likewise extend the definition of shape to $\overline W$ by setting
$\s((d,w))=\s(w)$.

Given $j \le k\le l \le m$ and a function $w :[j,m] \to A$, define
$w \vert _{[k,l]} \in W_{l-k}$ by $w \vert _{[k,l]} = w'$ where
$w'(i) = w(i+k)$ for $0 \le i \le l-k$.
If $\overline w = (d,w) \in \overline W_m$, define
\begin{align*}
\overline w \vert_{[k,l]}& = w\vert_{[k,l]} \in  W_{l-k} \ \text{if}\ k \ne 0, \\
\text{and} \qquad
\ow \vert_{[0,l]}& = (d, w\vert_{[0,l]}) \in  \overline W_l.
\end{align*}
If $w\in W_l$ and $k\in \ZZ^r$, define $\tau_kw:[k,k+l] \to A$ by
$(\tau_kw)(k+j)=w(j)$. If $w \in W_l$ where $l \ge 0$ and if $p \ne 0$, say that
$w$ is {\em $p$-periodic} if its $p$-translate,
$\tau_pw$, satisfies $\tau_pw\vert_{[0,l]\cap[p,p+l]} = w\vert_{[0,l]\cap[p,p+l]}$.

Assume that the matrices $M_i$ have been chosen so that the following conditions hold.

\begin{description}
\item[(H0)] Each $M_i$ is a nonzero \zomatrix.

\item[(H1)] Let $u\in W_m$ and $v \in W_n$. If $t(u) =o(v)$ then there exists a unique
$w\in W_{m+n}$ such that
$$w\vert_{[0,m]}=u \qquad \text{and} \qquad w\vert_{[m,m+n]}=v.$$

\item[(H2)] Consider the directed graph which has a vertex for each $a \in A$
and a directed edge from $a$ to $b$ for each $i$ such that $M_i(b,a) =1$.
This graph is irreducible.

\item[(H3)] Let $p\in \ZZ^r$, $p \ne 0$. There exists some  $w \in W$ which is not $p$-periodic.
\end{description}

\begin{definition}\label{word-prod} In the situation of (H1) we write $w=uv$ and say
that the product $uv$ exists. This product is clearly associative.
\end{definition}

The $C^*$-algebra $\cA$ is defined as the universal $C^*$-algebra
generated by a family of partial isometries
$\{s_{u,v};\ u,v \in \overline W \ \text{and} \ t(u) = t(v) \}$
satisfying the relations
\begin{subequations}\label{rel1*}
\begin{eqnarray}
{s_{u,v}}^* &=& s_{v,u} \label{rel1a*}\\
s_{u,v}s_{v,w}&=&s_{u,w} \label{rel1b*}\\
s_{u,v}&=&\displaystyle\sum_
{\substack{w\in W;\s(w)=e_j,\\
o(w)=t(u)=t(v)}}
s_{uw,vw} ,\ \text{for} \ 1 \le j \le r
\label{rel1c*}\\
s_{u,u}s_{v,v}&=&0 ,\ \text{for} \ u,v \in \overline W_0, u \ne v. \label{rel1d*}
\end{eqnarray}
\end{subequations}

\section{Products of higher rank words}

Condition~(H1) is fundamental to all that follows.
How then, does one verify (H1)?
Given {\zomatrices} $M_i$, $1\le i\le r$, the following three simple conditions
will be seen to be sufficient.

\begin{description}
\item[(H1a)] $M_iM_j=M_jM_i$.

\item[(H1b)] For $i<j$, $M_iM_j$ is a {\zomatrix}.

\item[(H1c)] For $i<j<k$, $M_iM_jM_k$ is a {\zomatrix}.
\end{description}

Indeed, the first two conditions are also necessary.

\begin{lemma}
Fix {\zomatrices} $M_i$, $1\le i\le r$. Then {\rm (H1)} implies {\rm (H1a)} and {\rm (H1b)}.
\end{lemma}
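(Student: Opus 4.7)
\medskip
\noindent\textbf{Plan.}
For fixed $i<j$ and $a,c\in A$, I would count the set $S_{a,c}=\{w\in W_{e_i+e_j} : w(0)=a,\ w(e_i+e_j)=c\}$ in three different ways and compare the answers.

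First, directly: each $w\in S_{a,c}$ is determined by its two remaining corner values $b=w(e_i)$ and $b'=w(e_j)$, subject only to the four edge conditions on the square $[0,e_i+e_j]$, namely
\[
M_i(b,a)=M_j(c,b)=1\quad\text{and}\quad M_j(b',a)=M_i(c,b')=1.
\]
The key observation is that the first pair of constraints involves only $b$ and the second only $b'$, so the two choices decouple and
\[
|S_{a,c}| = (M_jM_i)(c,a)\cdot(M_iM_j)(c,a).
\]
Second, apply (H1) with $m=e_i$, $n=e_j$: existence and uniqueness of the product make $(u,v)\mapsto uv$ a bijection from compatible pairs in $W_{e_i}\times W_{e_j}$ onto $W_{e_i+e_j}$, the restriction $w\mapsto(w|_{[0,e_i]},w|_{[e_i,e_i+e_j]})$ being its inverse (again by uniqueness). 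Restricting to $o(u)=a$, $t(v)=c$ and parameterizing by $b=t(u)=o(v)$ gives $|S_{a,c}|=(M_jM_i)(c,a)$. A third count, via (H1) with $m=e_j$, $n=e_i$, gives analogously $|S_{a,c}|=(M_iM_j)(c,a)$.

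Setting $x=(M_jM_i)(c,a)$ and comparing the three expressions yields $x=(M_iM_j)(c,a)$ and $x=x^2$, forcing $x\in\{0,1\}$. Since $a$ and $c$ are arbitrary, this simultaneously delivers (H1a), $M_iM_j=M_jM_i$, and (H1b), that these products are \zomatrices. The only place one has to be careful is the direct enumeration: one must see that on a single square the constraints on the two unknown inner corners are genuinely independent (each involving only the fixed corners $a$ and $c$), so the count factors as a product rather than a sum; once that is noticed, both conclusions fall out of the identity $x=x^2$.
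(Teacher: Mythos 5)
Your argument is correct and rests on the same mechanism as the paper's proof: applying (H1) to the unit square $[0,e_i+e_j]$, decomposed once at $e_i$ and once at $e_j$. The paper phrases this as ``three corners of the square determine the fourth uniquely,'' whereas you package it as a three-way count of squares with fixed opposite corners yielding $x=y=xy$, hence $x=y\in\{0,1\}$; both versions are sound and deliver (H1a) and (H1b) together.
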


\begin{proof}
Suppose $(M_iM_j)(b,a)>0$. Then there exists $c\in A$ so that $M_j(c,a)=1=M_i(b,c)$.
Let $u\in W_{e_j}$ and $v\in W_{e_i}$ be given by
$$u(0)=a\qquad u(e_j)=c\qquad \qquad \qquad v(0)=c\qquad v(e_i)=b.\qquad $$
According to (H1) there is a unique $w\in W_{e_i+e_j}$ with
$w(0)=a, w(e_j)=c, w(e_i+e_j)=b$. There must then be a unique $d\in A$ which can be used
for the missing value of $w$, $w(e_i)$. That is, there must be a unique $d\in A$ satisfying
$M_i(d,a)=1=M_j(b,d)$. Hence $(M_jM_i)(b,a)=1$.

We have seen that if $(M_iM_j)(b,a)>0$, then $(M_jM_i)(b,a)=1$. Likewise, if
$(M_jM_i)(b,a)>0$, then $(M_iM_j)(b,a)=1$. It follows that $M_iM_j$ and $M_jM_i$
are equal and have entries in $\{0,1\}$.
\end{proof}

\begin{lemma}\label{1}
Fix {\zomatrices} $M_i$ satisfying {\rm (H1a),(H1b)}, and {\rm (H1c)}.
Let $1 \le j \le r$. Let $w \in W_m$ and choose $a \in A$ so that $M_j(a,t(w)) =1$.
Then there exists a unique word $v \in W_{m+e_j}$ such that
$v\vert_{[0,m]} =w$ and $t(v) = a$.
\end{lemma}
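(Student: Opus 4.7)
The plan is to construct $v$ on the new top layer $F := \{l \in [0,m+e_j] : l_j = m_j + 1\}$; outside $F$ we must take $v = w$, and we must set $v(m+e_j) = a$, which is consistent with $v(m) = w(m)$ below via the assumption $M_j(a, t(w)) = 1$. I will define $v$ at the remaining points of $F$ by induction on the non-negative quantity $\sum_{i \ne j} (m_i - l_i)$, starting at $l = m+e_j$ (where the quantity is $0$) and working upward.

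For the inductive step, at $l \in F \setminus \{m+e_j\}$ pick any $i \ne j$ with $l + e_i \in F$; the value $v(l+e_i)$ has just been assigned, and the construction at that earlier step guarantees $M_j(v(l+e_i), w(l+e_i-e_j)) = 1$. Combined with $M_i(w(l+e_i-e_j), w(l-e_j)) = 1$ (from compatibility of $w$), this gives $(M_iM_j)(v(l+e_i), w(l-e_j)) \ge 1$. By (H1a) and (H1b) the matrix $M_iM_j = M_jM_i$ is $\{0,1\}$-valued, so the entry equals exactly $1$, producing a unique $b \in A$ with $M_j(b, w(l-e_j)) = 1$ and $M_i(v(l+e_i), b) = 1$. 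Set $v(l) := b$.

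The hard part will be verifying that this $b$ does not depend on the chosen direction $i$, so that all remaining $M_{i'}$-constraints at $l$ are simultaneously met. This is precisely where (H1c) enters. Given another direction $i' \ne i$ with $l + e_{i'} \in F$ and its candidate $b'$, I would evaluate the triple product $M_{i'}M_iM_j$ at $(v(l+e_i+e_{i'}), w(l-e_j))$; (H1a) allows this product to be reordered freely, and (H1c) makes it $\{0,1\}$-valued. One legitimate factorization uses the explicit path $w(l-e_j) \to w(l+e_i-e_j) \to v(l+e_i) \to v(l+e_i+e_{i'})$, with intermediates $(b, v(l+e_i))$; a second, going through $b'$, has its missing intermediate produced by applying (H1a) and (H1b) to $M_{i'}M_i = M_iM_{i'}$ via $v(l+e_{i'})$. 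Uniqueness from (H1c) forces the two factorizations to coincide, giving $b = b'$. Iterating this pairwise argument handles an arbitrary number of admissible directions, and uniqueness of the entire extension $v$ then follows immediately, since any valid $v$ is forced to satisfy the same defining equations at each $l \in F$.
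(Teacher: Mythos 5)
Your proof is correct and follows essentially the same route as the paper's: fill in the new layer cell by cell starting from the far corner $m+e_j$, using (H1a) and (H1b) to produce the unique next value (the paper's Figure~\ref{extension} step) and (H1c) to reconcile the candidates coming from two different directions (the paper's cube argument in Figure~\ref{extension3}). Your single induction over the lattice points of the top layer, with the pairwise consistency check, is just a more explicit organization of the paper's induction on $r$.
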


\begin{proof}
In the case $r=2$ this follows from conditions (H1a) and (H1b) alone. The situation is
illustrated in Figure~\ref{extension}, for $j=2$. The assertion is
that there is a unique word $v$ defined on the outer rectangle $[0,m+e_2]$
with final letter  $v(m+e_2)=a$.
The hypothesis is that
there is a transition from $w(m)$ to $a$, in the sense that $M_2(a,w(m)) =1$.
Define $v(m+e_2)=a$. For notational convenience, let $n=m-e_1$.
 We have $M_1(w(m),w(n)) =1$, and the product matrix
$M_2M_1$ defines a transition $w(n) \to w(m) \to a$. The conditions (H1a) and (H1b) assert that
the product $M_1M_2$ defines a unique transition $w(n) \to b \to a$, for
some $b \in A$. Define $v(n+e_2)=b$. Continue the process inductively until
$v$ is defined uniquely on the whole of $[0,m+e_2]$.
This completes the proof if $r=2$.
\refstepcounter{picture}
\begin{figure}[htbp]
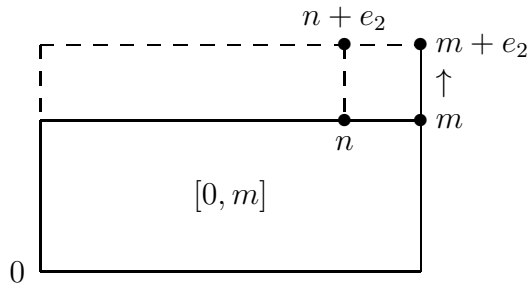
\label{extension}
\hfil
\centerline{
\beginpicture
\setcoordinatesystem units <1cm, 1cm>
\setplotarea  x from -6 to 6,  y from -1 to 2.2
\putrule from -3 -1 to 2 -1
\putrule from -3 1  to 2  1
\putrule from  -3 -1  to -3 1
\putrule from  2 -1  to 2 2
\setdashes
\putrule from  -3 2 to  2 2
\putrule from  -3 2 to  -3 1
\putrule from  1 2 to  1 1
\put {$[0,m]$}      at   -0.5  0
\put {$m$} [l]     at   2.2 1
\put {$0$} [r]     at   -3.2 -1
\put {$m+e_2$}[l]      at   2.2 2
\put {$n$} [t]     at   1  0.8
\put {$n+e_2$} [b]     at   1  2.2
\put {$\bullet$} at 2 1
\put {$\bullet$} at 2 2
\put {$\bullet$} at 1 1
\put {$\bullet$} at 1 2
\put{$\uparrow$}[l] at 2.2 1.5
\endpicture
}
\hfil
\caption{The case $r=2$.}
\end{figure}

Now consider the case $r=3$. Proceeding by induction
as in the case $r=2$, the extension problem reduces
to that for a single cube. Consider therefore
without loss of generality the unit cube based at $0$
with $m=e_1+e_2$ and $j=3$, as illustrated in Figure~\ref{extension3}.
Then $w$ is defined on the base of the cube $[0,m]$ and it is
required to extend $w$ to a function $v$ on the whole cube
taking the value $a$ at $m+e_3$, under the assumption
that there is a valid transition from $w(m)$ to $a$.
Now use the case $r=2$ on successive faces of the cube.
Working on the right hand face there is a unique possible value $b$
for $v(e_1+e_3)$. Then, using this value for $v(e_1+e_3)$ on the near face
we obtain the value $c$ for $v(e_3)$.
Similarly, working respectively on the back and left faces
we obtain a value $v(e_2+e_3)=d$ and a second value,  $c'$, for $v(e_3)$.

Now suppose that $c \ne c'$. Working on the top face and
using the values $a$, $d$, and $c'$, we obtain another value, $b'$ for $v(e_1+e_3)$.
There are two possible
transitions along the directed path $0 \to e_3 \to e_1+e_3 \to m+e_3$,
namely
$$w(0) \to c' \to b' \to a \quad \text{and} \quad w(0) \to c \to b \to a.$$
This contradicts the assumption that $(M_2M_1M_3)(a,w(0)) \in \{0,1\}$.

\refstepcounter{picture}
\begin{figure}[htbp]
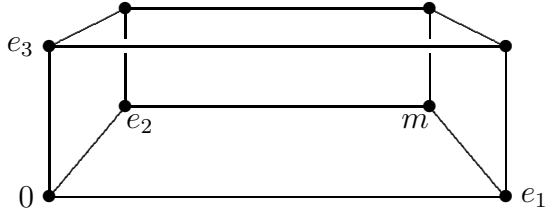
\label{extension3}
\hfil
\centerline{
\beginpicture
\setcoordinatesystem units <1cm, 1cm>
\setplotarea  x from -6 to 6,  y from -2 to 1
\putrule from -3 0 to 3 0
\putrule from -3 0 to -3 -2
\putrule from  3 0  to 3 -2
\putrule from -3 -2 to 3 -2
\putrule from -2 0.5 to  2 0.5
\setlinear \plot  -2 0.5  -3 0   /
\setlinear \plot  2 0.5  3 0  /
\putrule from -2 0.5 to -2 0.1
\putrule from -2 -0.1 to -2 -0.8
\putrule from 2 0.5 to 2 0.1
\putrule from 2 -0.1 to 2 -0.8
\putrule from -2 -0.8 to 2 -0.8
\setlinear \plot -2 -0.8  -3 -2 /
\setlinear \plot 2 -0.8  3 -2 /
\put {$\bullet$} at -3 0
\put {$e_3$} [r] at -3.2 0
\put {$\bullet$} at 3  0
\put {$\bullet$} at -3 -2
\put {$0$} [r] at -3.2 -2
\put {$\bullet$} at 3  -2
\put {$e_1$} [l] at 3.2  -2
\put {$\bullet$} at -2 0.5
\put {$\bullet$} at 2  0.5
\put {$\bullet$} at -2  -0.8
\put {$e_2$} [l,t] at -1.99  -0.9
\put {$\bullet$} at  2  -0.8
\put {$m$} [t,r]
at  1.99  -0.9
\endpicture
}
\hfil
\caption{The case $r=3$.}
\end{figure}

The proof for general $r$ now follows by induction. The uniqueness of the extension
follows from the two dimensional considerations embodied in Figure~\ref{extension}.
All the compatibility conditions required for existence follow from the three dimensional
considerations of Figure~\ref{extension3}.
\end{proof}

The next result follows by induction from Lemma~\ref{1}.

\begin{lemma}\label{2}
Fix {\zomatrices} $M_i$, $1\le i\le r$, satisfying {\rm (H1a), (H1b)}, and {\rm (H1c)}.
Let $1\le j_1,\dots ,j_p\le r$, let $a_0,\dots,a_p\in A$ and suppose that $M_{j_i}(a_i,a_{i-1})=1$
for $1\le i\le p$. Then there exists a unique word $w\in W$ with
$\sigma(w)=e_{j_1}+\dots+e_{j_p}$, such that
$w(0)=a_0$ and $w(e_{j_1}+\dots +e_{j_i})=a_i$ for $1\le i\le p$.
\end{lemma}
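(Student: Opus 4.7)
The plan is straightforward induction on $p$, with Lemma~\ref{1} doing all the work at each step. Let me describe it precisely.

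Write $m_i = e_{j_1} + \dots + e_{j_i}$ for $0 \le i \le p$ (with $m_0 = 0$), so that $m_p$ is the prescribed shape. I will build, by induction on $i$, a word $w_i \in W_{m_i}$ with $w_i(0) = a_0$ and $w_i(m_\ell) = a_\ell$ for all $0 \le \ell \le i$, and show that $w_i$ is unique with these properties. Setting $w = w_p$ then gives existence and uniqueness.

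The base case $i = 0$ is trivial: $w_0 \in W_0 = A$ must equal $a_0$. For the inductive step, assume $w_i$ has been constructed uniquely. Its endpoint value is $t(w_i) = w_i(m_i) = a_i$, and by hypothesis $M_{j_{i+1}}(a_{i+1}, a_i) = 1$. Apply Lemma~\ref{1} with $w = w_i$, $j = j_{i+1}$, and $a = a_{i+1}$: this produces a unique $w_{i+1} \in W_{m_i + e_{j_{i+1}}} = W_{m_{i+1}}$ satisfying $w_{i+1}\vert_{[0,m_i]} = w_i$ and $t(w_{i+1}) = a_{i+1}$. The restriction condition automatically carries over the earlier values $w_{i+1}(m_\ell) = w_i(m_\ell) = a_\ell$ for $\ell \le i$, so $w_{i+1}$ has the desired properties.

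For uniqueness: any $w \in W_{m_{i+1}}$ satisfying the prescribed conditions restricts to $w\vert_{[0,m_i]} \in W_{m_i}$ satisfying the inductive conditions at level $i$, hence equals $w_i$ on that subbox by the inductive hypothesis, and then coincides with $w_{i+1}$ by the uniqueness clause of Lemma~\ref{1}. There is no serious obstacle here — all the combinatorial difficulty is already absorbed into Lemma~\ref{1} (which in turn uses (H1a), (H1b), (H1c)); the only thing to check is that the endpoint values line up at each stage so that Lemma~\ref{1} applies, which is exactly what the hypothesis $M_{j_i}(a_i, a_{i-1}) = 1$ guarantees.
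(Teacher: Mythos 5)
Your proof is correct and is exactly the argument the paper intends: the paper disposes of this lemma with the single sentence ``The next result follows by induction from Lemma~\ref{1},'' and your write-up is a faithful, careful expansion of that induction, including the uniqueness bookkeeping. Nothing further is needed.
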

\noindent As a consequence we have

\begin{lemma}\label{3}
Fix {\zomatrices} $M_i$, $1\le i\le r$.
If  {\rm (H1a), (H1b)}, and {\rm (H1c)} hold, then {\rm (H1)} holds.
\end{lemma}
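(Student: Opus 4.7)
My plan is to build $w$ one coordinate at a time using Lemma~\ref{1} as the engine, and then invoke Lemma~\ref{2} at the end to identify the restriction of $w$ with $v$. Fix $u \in W_m$ and $v \in W_n$ with $t(u) = o(v)$, and choose a monotone path $0 = p_0, p_1, \dots, p_N = n$ in $[0,n]$ with $p_k - p_{k-1} = e_{j_k}$ for a sequence of indices $j_k \in \{1,\dots,r\}$, where $N = n_1 + \cdots + n_r$.

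First I would prove by induction on $k$ that there is a unique $w^{(k)} \in W_{m+p_k}$ such that $w^{(k)}\vert_{[0,m]} = u$ and $w^{(k)}(m + p_i) = v(p_i)$ for $0 \le i \le k$. The base case $k=0$ is just $w^{(0)} = u$. For the inductive step, set $a = v(p_k)$. Since $v \in W_n$ and $p_k - p_{k-1} = e_{j_k}$ we have $M_{j_k}(v(p_k), v(p_{k-1})) = 1$, and by induction $t(w^{(k-1)}) = w^{(k-1)}(m + p_{k-1}) = v(p_{k-1})$, so $M_{j_k}(a, t(w^{(k-1)})) = 1$. Lemma~\ref{1} then yields a unique $w^{(k)} \in W_{m+p_k}$ extending $w^{(k-1)}$ with $t(w^{(k)}) = a$, which is precisely what is needed.

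Taking $k = N$ produces a word $w := w^{(N)} \in W_{m+n}$ with $w\vert_{[0,m]} = u$ and $w(m+p_i) = v(p_i)$ for every $i$. To conclude existence I still need $w\vert_{[m,m+n]} = v$ (not just agreement along the path). For this I would apply Lemma~\ref{2} to the word $v$: both $v$ and the translate $\tau_{-m}(w\vert_{[m,m+n]})$ are elements of $W_n$ taking the same values $v(p_0), v(p_1), \dots, v(p_N)$ along the path $0 = p_0, \dots, p_N = n$, so the uniqueness clause of Lemma~\ref{2} forces them to coincide.

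For uniqueness in (H1), I would observe that the inductive construction is in fact forced: any candidate $w \in W_{m+n}$ satisfying $w\vert_{[0,m]} = u$ and $w\vert_{[m,m+n]} = v$ must restrict to some $w^{(k)} \in W_{m+p_k}$ on $[0, m+p_k]$, and these restrictions are then pinned down at each stage by the uniqueness part of Lemma~\ref{1}, using $w(m+p_k) = v(p_k)$. I don't foresee any real obstacle beyond choosing a convenient monotone path and carrying the induction through cleanly; the whole content really sits in Lemmas~\ref{1} and~\ref{2}, which have already done the combinatorial work.
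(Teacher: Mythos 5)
Your proof is correct and follows essentially the same route as the paper: the paper simply applies Lemma~\ref{2} once to a monotone path through $[0,m+n]$ passing via $m$, with values prescribed by $u$ and then $v$, whereas you re-run the Lemma~\ref{1} induction on the segment beyond $[0,m]$ and then invoke the uniqueness clause of Lemma~\ref{2} to identify the tail with $v$ — the same underlying idea that a word is determined by its values along a monotone lattice path. (Minor notational point: $w\vert_{[m,m+n]}$ is already re-indexed to lie in $W_n$ by the paper's conventions, so the $\tau_{-m}$ is redundant.)
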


\begin{proof}
Let $u\in W_m$ and $v\in W_n$. Suppose  $t(u)=o(v)$. Choose $j_1,\dots,j_p$ as in Lemma~\ref{2}
so that
$$e_{j_1}+\dots+e_{j_q}=m \qquad  e_{j_{q+1}}+\dots+e_{j_p}=n$$
for some $q$, $0\le q\le p$. Choose $a_i$, $0\le i\le q$ so as to force the $w$ of Lemma~\ref{2} to
satisfy $w|_{[0,m]}=u$. Thus $a_q=t(u)=o(v)$. Choose $a_i$, $q\le i\le p$ so as to force $w$ to
satisfy $w|_{[m,m+n]}=v$. The existence and uniqueness in (H1) follow from the existence and uniqueness
 in Lemma~\ref{2}.
\end{proof}

\begin{corollary}\label{4}
If $\overline u=(d,u) \in \overline W_m$ and $v \in W_n$
with $t(\ou) = o(v)$,
then there exists a unique $\ow \in \overline W_{m+n}$ such that
$$\ow\vert_{[0,m]}=\ou \ \text{and} \ \ow\vert_{[m,m+n]}=v.$$
In these circumstances we write $\ow =\ou v$, and say
that the product $\ou v$ exists.
\end{corollary}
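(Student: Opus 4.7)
The plan is to reduce the decorated statement to the undecorated Lemma~\ref{3} by essentially ``carrying the decoration along for the ride.'' The decoration $d$ plays no role in the combinatorics of extending words; it is only constrained by the condition $o(w) = \delta(d)$ on the underlying word. So once the underlying product $uv$ is produced by Lemma~\ref{3}, pairing it with $d$ should yield the required decorated word.

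In detail, I would first apply Lemma~\ref{3} to $u \in W_m$ and $v \in W_n$. Since $t(u) = t(\ou) = o(v)$, this yields a unique $w \in W_{m+n}$ with $w|_{[0,m]} = u$ and $w|_{[m,m+n]} = v$. I then set $\ow = (d,w)$ and check that $\ow \in \overline W_{m+n}$: since $w(0) = u(0) = o(u) = \delta(d)$ (using that $\ou = (d,u) \in \overline W_m$), we indeed have $o(w) = \delta(d)$.

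Next I would verify the two restriction identities. For the first, by the definition of the restriction of decorated words with $k=0$, $\ow|_{[0,m]} = (d, w|_{[0,m]}) = (d, u) = \ou$. For the second, since $m \ne 0$, the definition of restriction gives $\ow|_{[m,m+n]} = w|_{[m,m+n]} = v$ (an undecorated word, as required by the statement).

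For uniqueness, suppose $\ow' = (d', w') \in \overline W_{m+n}$ satisfies the same two conditions. From $\ow'|_{[0,m]} = \ou = (d, u)$ and the definition of restriction at $k=0$ we read off $d' = d$ and $w'|_{[0,m]} = u$. From $\ow'|_{[m,m+n]} = v$ we get $w'|_{[m,m+n]} = v$. The uniqueness clause of Lemma~\ref{3} then forces $w' = w$, whence $\ow' = \ow$. There is no real obstacle here; the only thing to be careful about is distinguishing the two cases in the definition of restriction (whether or not the subinterval includes $0$), so that the decoration is attached on the left-hand piece and stripped off on the right-hand piece.
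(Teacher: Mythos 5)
Your proposal is correct and is exactly the paper's argument: the paper's entire proof reads ``This is immediate, with $\ou v = (d,uv)$,'' and you have simply spelled out the routine verifications (membership in $\overline W_{m+n}$, the two restriction identities, and uniqueness via Lemma~\ref{3}). The only caveat is your aside ``since $m \ne 0$,'' which is not a hypothesis of the corollary; but when $m = 0$ the statement must in any case be read with the decoration stripped from the second restriction, a point the paper itself glosses over.
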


\begin{proof}
This is immediate, with $\ou v =(d,uv)$.
\end{proof}

For the next two lemmas, and for the rest of the paper, suppose that matrices\ $M_i$ have
been chosen so that (H0)--(H2) hold.

\begin{lemma}\label{ends} Let $a,b \in A$ and $n \in \ZZ^r_+$. There exists $w \in W$ with
$\s (w) \ge n$ such that $o(w)=a$ and $t(w)=b$.
\end{lemma}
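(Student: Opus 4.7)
The plan is to reduce the problem to finding a directed walk in the transition graph of (H2). By Lemma~\ref{2}, it suffices to produce a sequence $a=a_0,a_1,\ldots,a_p=b$ in $A$ together with labels $j_1,\ldots,j_p \in \{1,\ldots,r\}$ such that $M_{j_i}(a_i,a_{i-1})=1$ for each $i$ and such that each label $j$ occurs at least $n_j$ times among $j_1,\ldots,j_p$; the resulting word $w \in W$ will then have shape $e_{j_1}+\cdots+e_{j_p}\ge n$ and endpoints $o(w)=a$, $t(w)=b$.

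By (H2) there is some directed walk $P_0$ from $a$ to $b$ in the transition graph. To augment it in any prescribed direction, fix for each $j \in \{1,\ldots,r\}$ a pair $c_j, c_j' \in A$ with $M_j(c_j',c_j)=1$, which exists by (H0). Apply (H2) twice more to obtain a directed walk $Q_j$ from $b$ to $c_j$ and a directed walk $R_j$ from $c_j'$ back to $b$. Concatenating $Q_j$, the $j$-labelled edge from $c_j$ to $c_j'$, and $R_j$ produces a closed walk $L_j$ based at $b$ whose label sequence contains the symbol $j$ at least once.

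Starting from $P_0$ and working through $j=1,2,\ldots,r$ in turn, I would successively append to the current walk enough copies of $L_j$ to ensure that the total number of $j$-labels in the walk is at least $n_j$. Each such augmentation leaves the walk starting at $a$ and ending at $b$, and is nondecreasing in every shape coordinate, so the bounds achieved in previous rounds are preserved. After all $r$ rounds the cumulative walk from $a$ to $b$ has a label multiset that dominates $n$ coordinatewise, and feeding this walk into Lemma~\ref{2} yields the required $w$. I do not foresee a substantial obstacle: the content is purely combinatorial, and the only observation to flag is that padding in direction $j$ cannot undo previous padding in directions $j' \ne j$.
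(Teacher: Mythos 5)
Your proof is correct and follows essentially the same route as the paper: both arguments use (H0) to guarantee at least one transition in each direction $j$, (H2) to connect the pieces through the transition graph, and (H1) (via Lemma~\ref{2}) to assemble the resulting edge-walk into a genuine higher-rank word. The only cosmetic difference is that you pad with closed walks based at $b$, whereas the paper strings together one word of each needed shape using connecting ``spacer'' words.
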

\begin{proof}
By condition~(H0), the matrix $M_j$ is nonzero, so there exists at least one word of shape $e_j$.
Using this, choose words $w_1,\dots,w_q$ so that $\s(w_1)+\dots +\s(w_q)\ge n$.
Using conditions~(H1) and (H2),
one can always find a word with a given origin and terminus.
So choose $s_0 \in W$ with $o(s_0)=a$,  $t(s_0)=o(w_1)$, choose $s_k\in W$ with $o(s_k)=t(w_k)$, $t(s_k)=o(w_{k+1})$
for $1\le k \le q-1$, and choose $s_q\in W$ with $o(s_q)=t(w_q)$, $t(s_q)=b$.
Let $w=s_0w_1s_1w_2\dots w_{q-1}s_{q-1}w_qs_q$.
\end{proof}

\begin{lemma}\label{f1}
Given $\ou \in \oW$ and $b \in A$, there exists $v \in  W$ such that $\ou v$ exists,
$\s(v)\ne0$ and
\begin{equation*}t(v)=t(\ou v)=b.
\end{equation*}

\end{lemma}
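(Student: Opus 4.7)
The plan is to show that this is an almost immediate consequence of Lemma~\ref{ends} combined with Corollary~\ref{4}. Everything needed is already packaged in earlier results; the only wrinkle is ensuring $\s(v) \ne 0$.

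First I would apply Lemma~\ref{ends} with the specific choices $a := t(\ou) \in A$, the given $b \in A$, and $n := e_1 \in \ZZ_+^r$ (any fixed nonzero vector in $\ZZ_+^r$ works equally well). This produces a word $v \in W$ satisfying $\s(v) \ge e_1$, $o(v) = t(\ou)$, and $t(v) = b$. The inequality $\s(v) \ge e_1$ automatically forces $\s(v) \ne 0$, which handles the non-triviality requirement.

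Next, since $t(\ou) = o(v)$ by construction, Corollary~\ref{4} guarantees the existence of a unique decorated word $\ou v \in \oW$. By the definition of the product given in that corollary, $(\ou v)\vert_{[m,m+\s(v)]} = v$, where $m = \s(\ou)$, so $t(\ou v) = t(v) = b$. All three conclusions of the lemma are therefore satisfied.

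There is no real obstacle here: the only substantive input is that conditions (H0)--(H2) (which are in force for the remainder of the paper) were exactly what Lemma~\ref{ends} needed in order to guarantee a word with prescribed endpoints and arbitrarily large shape, and Corollary~\ref{4} handles the gluing to the decoration. Thus the proof reduces to a one-line citation of these two results.
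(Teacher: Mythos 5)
Your proof is correct and follows exactly the paper's own route: the paper's proof is the one-line statement that the lemma ``follows immediately from Lemma~\ref{ends}'', and you have simply spelled out the details (choosing $a=t(\ou)$ and $n=e_1$ to force $\s(v)\ne 0$, then invoking Corollary~\ref{4} for the product). Nothing further is needed.
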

\begin{proof}
This follows immediately Lemma~\ref{ends}.
\end{proof}

\section{nonperiodicity}\label{nonperiodicitysection}

Assume that $M_i$,\ $1\le i\le r$, have been chosen and that (H0)--(H2)
hold.  In the large class of examples associated to affine buildings
it is fairly easy to verify the nonperiodicity condition, (H3).
However, in general it is hard to see how one can start with the
matrices $M_i$ and check~(H3).  In this section we present a condition
which implies~(H3), and show how it can in principle be checked.  This
material is not used in the remainder of the paper.

\begin{description}
\item[(H3*)] Fix $j$, $1\le j\le r$. Let $m\in \ZZ_+^r$ with $m_j=0$. Let $w\in W_m$.
Then there exist $u, u' \in W_{m+e_j}$ such that
$u|_{[0,m]}=u'|_{[0,m]}=w$ but $u(e_j)\ne u'(e_j)$.
\end{description}

For $l,m \in \ZZ^r$, define
$$l \wedge m=(l_1 \wedge m_1, \dots, l_r \wedge m_r),$$
$$l \vee m=(l_1 \vee m_1, \dots, l_r \vee m_r),$$
$$|l|=l \vee (-l).$$

\noindent If $w \in W_l$ where $l \ge 0$ and if $p \ne 0$,
 recall that $w$ is {\em $p$-periodic} if its $p$-translate,
$\tau_pw$, satisfies $\tau_pw\vert_{[0,l]\cap[p,p+l]} = w\vert_{[0,l]\cap[p,p+l]}$.

\refstepcounter{picture}
\begin{figure}[htbp]
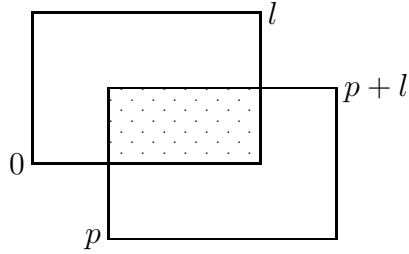
\label{intersection}
\hfil
\centerline{
\beginpicture
\setcoordinatesystem units <1cm, 1cm>
\setplotarea  x from -6 to 6,  y from -1 to 2
\setshadegrid span <4pt>
\putrectangle corners at -3 2 and 0 0
\putrectangle corners at -2 1 and 1 -1
\shaderectangleson
\putrectangle corners at -2 1 and 0 0
\shaderectanglesoff
\put{$0$}[r] at -3.1 0
\put{$l$}[l] at 0.1 2
\put{$p$}[r] at -2.1 -1
\put{$p+l$}[l] at 1.1 1
\endpicture
}
\hfil
\caption{The region $[0,l]\cap[p,p+l]$.}
\end{figure}

\begin{lemma}\label{a1} Conditions {\rm (H0)--(H2)} and {\rm (H3*)}
imply condition {\rm (H3)}.
\end{lemma}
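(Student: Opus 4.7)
The plan is to construct explicitly a word that is not $p$-periodic. Write $p^+ := p \vee 0$ and $p^- := (-p) \vee 0$, so $p = p^+ - p^-$ and $|p| = p^+ + p^-$. First, $p$-periodicity and $(-p)$-periodicity coincide: the substitution $q' = q - p$ rewrites the constraint $w(q) = w(q - p)$ on $q \in [p^+, l - p^-]$ as $w(q'+p) = w(q')$ on $q' \in [p^-, l - p^+]$, which is precisely $(-p)$-periodicity. So I may assume $p_j > 0$ for some $j$ (passing to $-p$ if necessary). Second, applying (H3*) to a single letter $a \in W_0$ (a word of shape $0$, so every coordinate is $0$) shows each column of every $M_j$ has $\ge 2$ nonzero entries, and combined with \lemref{1} this upgrades to: every $u \in W_m$ admits $\ge 2$ extensions in $W_{m+e_i}$ for every $i$, whatever $m_i$ may be.

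Fix $l := |p| + e_j$; then $l \ge |p|$ and the overlap $[p^+, l - p^-] = \{p^+, p^+ + e_j\}$ contains $p^+$, at which $p$-periodicity reads $w(p^+) = w(p^-)$. Set $m_0 := |p| - p_j e_j$, so $(m_0)_j = 0$, $p^- \in [0, m_0]$, and $m_0 + p_j e_j = |p|$. Starting from any letter and iteratively extending in directions $i \ne j$ via the upgraded property above, pick some $v_0 \in W_{m_0}$, and let $a := v_0(p^-)$. If $p_j \ge 2$, further extend $v_0$ by $p_j - 1$ arbitrary steps in direction $j$ to obtain $u \in W_{m_0 + (p_j - 1) e_j}$; if $p_j = 1$ simply take $u = v_0$.

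The key step is to apply (H3*) to the sub-word $v := u|_{[p^+ - e_j,\, m_0 + (p_j - 1) e_j]}$, whose shape is $|p| - p^+ = p^-$ with $(p^-)_j = 0$. This yields extensions $v', v'' \in W_{p^- + e_j}$ with $v'(e_j) \ne v''(e_j)$. In the ambient word's coordinates, the sub-position $e_j$ corresponds to $(p^+ - e_j) + e_j = p^+$. Using $t(v')$ as the top corner of the next extension $u' \in W_{|p|}$ of $u$ in direction $j$, \lemref{1} forces the restriction $u'|_{[p^+ - e_j, |p|]}$ to equal $v'$ (both are extensions of $v$ by $e_j$ in direction $j$ with the same top corner $t(v')$, and \lemref{1} gives uniqueness); in particular $u'(p^+) = v'(e_j)$. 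A final arbitrary extension in direction $j$ produces $w \in W_l$ with $w(p^+) = v'(e_j)$. Since $v'(e_j)$ takes two distinct values while $w(p^-) = a$ is fixed, some choice of $v'$ gives $w(p^+) \ne w(p^-)$, and $w$ fails $p$-periodicity at $q = p^+$, proving (H3).

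The conceptual heart of the argument is choosing the right sub-word for (H3*): a direct application at the bottom slice of the full word varies only $w(e_j)$, which is usually not at an overlap position. By anchoring the sub-word at $p^+ - e_j$, its own origin-corner of the top slice sits at the ambient position $p^+$, so the (H3*) branching transfers, via \lemref{1}'s uniqueness, into two different values of $w(p^+)$. Once this geometric alignment is set up, the remaining checks (existence of $v_0$ and $u$ via iterated extensions, agreement of $v'$ with the ambient extension on their common domain, and the shape arithmetic $|p| - p^+ = p^-$) are all routine.
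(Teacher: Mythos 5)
Your proof is correct and follows essentially the same route as the paper's: reduce to $p_j>0$, observe that for a word of shape (roughly) $|p|$ the periodicity condition collapses to the single constraint $w(p_+)=w(p_-)$, and then apply (H3*) to the sub-word on $[p_+-e_j,\,|p|-e_j]$ (of shape $p_-$, whose $j$-th coordinate vanishes) so that the two branches differ exactly at the ambient position $p_+$. The only differences are cosmetic: you carry an extra, unnecessary final extension to shape $|p|+e_j$, and you spell out the gluing via the uniqueness in Lemma~\ref{1}, which the paper leaves implicit.
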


\begin{proof}
Observe that $p$-periodicity is invariant under replacement of
$p$ by $-p$. We may therefore assume that $p$ has at least one positive
component which we may take to be $p_1$.
Let $p_{+}=p\vee 0$ and $p_{-}=(-p)\vee 0$.
We will construct $w\in W_{|p|}$. In this case $w$ is defined on $[0,|p|]$,
$\tau_pw$ is defined on $[p,|p|+p]$, and $\tau_pw$ and $w$ are both defined on
$[p\vee 0,|p|\wedge (|p|+p)]=[p_+,p_+]$, a single point.
The word $w$ is $p$-periodic if and only if
$w(p_+)=(\tau_pw)(p_+)=w(p_+-p)=w(p_-)$. The situation is illustrated in Figure~\ref{proofa1}.
Choose any $v$  in $W_{|p|-e_1}$
and let $u=v\vert_{[p_+-e_1,|p|-e_1]} \in W_{p_-}$.
By condition~(H3*), there exist two different words $x,x' \in W_{p_-+e_1}$ such that
$x\vert_{[0,p_-]}=x'\vert_{[0,p_-]}$ but $x(e_1)\ne x'(e_1)$.
At least one of $x(e_1)$ and $x'(e_1)$ differs from $v(p_-)$; we may assume that
$x(e_1)\ne v(p_-)$. Let $w$ be defined by $w\vert_{[0,|p|-e_1]}=u$, $w\vert_{[p_+-e_1,|p|]}=x$.
Then $w(p_+)=x(e_1)\ne v(p_-)=w(p_-)$, so $w$ is not $p$-periodic.
\end{proof}

\refstepcounter{picture}
\begin{figure}[htbp]
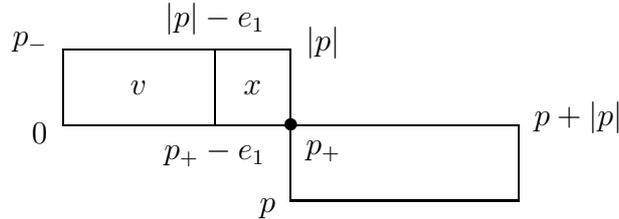
\label{proofa1}
\hfil
\centerline{
\beginpicture
\setcoordinatesystem units <1cm, 1cm>
\setplotarea  x from -3 to 3,  y from -1.1 to 1.1
\setshadegrid span <4pt>
\putrectangle corners at -3 1 and 0 0
\putrectangle corners at 0 0 and 3 -1
\putrectangle corners at -1 1 and 0 0
\put{$\bullet$} at 0 0
\put{$p_-$}[r] at -3.2 1.1
\put{$0$}[r] at -3.2 -0.1
\put{$|p|$}[l] at 0.2 1.1
\put{$p_+$}[l,t] at 0.2 -0.2
\put{$p$}[r] at -0.2 -1.1
\put{$p+|p|$}[l] at 3.2 0.1
\put{$|p|-e_1$}[b] at -1 1.2
\put{$p_+-e_1$}[t] at -1 -0.2
\put{$v$} at     -2 0.5
\put{$x$} at     -0.5 0.5
\endpicture
}
\hfil
\caption{Proof of Lemma~\ref{a1} in the case $r=2$.}
\end{figure}

Now we discuss the checkability of (H3*). Fix $j$, $1\le j\le r$.
For $w\in W_m$ with $\s(w)_j=0$ let
$$A(j,w)=\{u(e_j) ; u\in W_{m+e_j}\quad \text{and}\quad u|_{[0,m]}=w\}.$$
\noindent Given $w$, one can calculate $A(j,w)$ by considering, one at a time,
the possible values of $u(m+e_j)$, and working back to find the possible values of
$u(e_j)$ as in the proof of Lemma~\ref{1}. The assertion of (H3*) is that $\# A(j,w)\ge 2$
for any $w$ with $\s(w)_j=0$.

Let $v,w \in W$ with $\s(v)=e_k$, $k\ne j$, $\s(w)_j=0$, and suppose that $vw$ is defined.
Then $$A(j,vw)=\{a\in A ;\text{$M_j(a,o(v))=1$ and $M_k(b,a)=1$ for some $b\in A(j,w)$}\}.$$
\noindent Thus one can calculate $A(j,vw)$ from the knowledge of $v$ and $A(j,w)$.

To check (H3*) for the fixed value of $j$, one proceeds to construct,
for each $c\in A$ a complete list of possibilities for $A(j,w)$ with
$o(w)=c$. The first step in the algorithm is to insert in the lists
all $A(j,w)$ for $w\in W_0$. Then proceeding cyclically through all
words $v$ with $\s(v)=e_k$, for all $k\ne j$, the algorithm adds to
the lists all possible values of $A(j,vw)$ corresponding to values
$A(j,w)$ already on the lists.  The algorithm terminates when a
complete cycle through the words $v$ generates no new possible values
for $A(j,w)$.  The algorithm works because any $w\in W$ can be written
$w=v_1v_2\dots v_q$, with $\s(v_i) = e_{k_i}$.

Is this algorithm practical for hand computation? for electronic
computation? The authors have done no experiments, but they suspect
that the lists of subsets of~$A$ will get out of hand rapidly as the
cardinality of~$A$ increases.  The situation is not entirely
satisfactory.

\section{The $C^*$-algebra}

Assume conditions (H0)-(H3) hold.
Define an abstract untopologized algebra $\cA_0$ over $\CC$ which depends on
$A$, $(M_j)_{j=1}^r$, $D$, and $\delta$.
The generators of $\cA_0$ are
$\{s_{u,v}^0; u,v \in \overline W \ \text{and} \ t(u) = t(v) \}$.
The relations defining $\cA_0$ are
\begin{equation*}\label{rel0}
\begin{split}
s_{u,v}^0s_{v,w}^0&=s_{u,w}^0\\
s_{u,v}^0&=\displaystyle\sum_
{\substack{w\in W;\s(w)=e_j,\\
o(w)=t(u)=t(v)}}
s_{uw,vw}^0 ,\ \text{for} \ 1 \le j \le r \\
s_{u,u}^0s_{v,v}^0&=0 \ \text{for} \ u,v \in \overline W_0,  u \ne v.
\end{split}
\end{equation*}

It is trivial to verify that $\cA_0$ has an antilinear antiautomorphism defined
on the generators by

\begin{equation*}
{s_{u,v}^0}^* = s_{v,u}^0.
\end{equation*}
This makes $\cA_0$ a $*$-algebra.
Let $\cA$ be the corresponding enveloping $C^*$-algebra (c.f. \cite[p.256]{ck})
and let $s_{u,v}$ be the image of $s_{u,v}^0$ in $\cA$.
The generators of $\cA$ are therefore
\begin{equation*}
\{s_{u,v};\ u,v \in \overline W \ \text{and} \ t(u) = t(v) \}
\end{equation*}
and the defining relations are

\begin{subequations}\label{rel1}
\begin{eqnarray}
{s_{u,v}}^* &=& s_{v,u} \label{rel1a}\\
s_{u,v}s_{v,w}&=&s_{u,w} \label{rel1b}\\
s_{u,v}&=&\displaystyle\sum_
{\substack{w\in W;\s(w)=e_j,\\
o(w)=t(u)=t(v)}}
s_{uw,vw} ,\ \text{for} \ 1 \le j \le r
\label{rel1c}\\
s_{u,u}s_{v,v}&=&0 \ \text{for} \ u,v \in \overline W_0, u \ne v .\label{rel1d}
\end{eqnarray}
\end{subequations}

\begin{remark}\label{equiv} Suppose that $u,v \in \overline W$ and
$t(u)=t(v)$.  Then  $s_{u,v}$ is a partial isometry with initial projection
${s_{u,v}}^*s_{u,v}=s_{v,v}$ and final projection $s_{u,v}{s_{u,v}}^*=s_{u,u}$.
\end{remark}

\begin{lemma}\label{f2} Fix $m \in \ZZ^r_+$ and let
$u,v \in \overline W$ with $t(u) = t(v)$.
Then
\begin{equation*}s_{u,v}= \displaystyle\sum_
{\substack{
w\in W;\s(w)=m \\
o(w)=t(u)=t(v)
}} s_{uw,vw}.
\end{equation*}
\end{lemma}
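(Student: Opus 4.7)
The plan is to prove this by induction on $|m|=m_1+\cdots+m_r$. The base case $|m|=0$ is trivial: the only $w\in W_0$ with $o(w)=t(u)$ is $w=t(u)$ itself, and the product convention makes $uw=u$, $vw=v$, so both sides equal $s_{u,v}$. The case $|m|=1$, i.e.\ $m=e_j$, is precisely the defining relation~(\ref{rel1c}).

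For the inductive step, fix $m$ with $|m|\ge 2$ and choose any index $j$ with $m_j\ge 1$. Write $m=m'+e_j$ with $m'\in\ZZ_+^r$. The induction hypothesis gives
\begin{equation*}
s_{u,v}=\sum_{\substack{w'\in W;\,\s(w')=m'\\ o(w')=t(u)=t(v)}} s_{uw',vw'}.
\end{equation*}
Now apply the relation~(\ref{rel1c}) to each term $s_{uw',vw'}$, using the direction $j$ and the fact that $t(uw')=t(w')=t(vw')$, to obtain
\begin{equation*}
s_{uw',vw'}=\sum_{\substack{x\in W;\,\s(x)=e_j\\ o(x)=t(w')}} s_{uw'x,\,vw'x}.
\end{equation*}
Substituting and using associativity of the word product (Definition~\ref{word-prod}), one gets $s_{u,v}$ as a double sum over pairs $(w',x)$ with $\s(w')=m'$, $\s(x)=e_j$, $o(w')=t(u)=t(v)$, and $t(w')=o(x)$, with summand $s_{u(w'x),\,v(w'x)}$.

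The remaining step is to identify this double sum with a single sum over $w\in W_m$ with $o(w)=t(u)=t(v)$. Given such $w$, set $w'=w|_{[0,m']}\in W_{m'}$ and $x=w|_{[m',m]}\in W_{e_j}$; these satisfy the compatibility $t(w')=o(x)$ and by (H1) recombine uniquely to $w=w'x$. Conversely, any compatible pair $(w',x)$ produces a unique $w\in W_m$ by (H1). This bijection between compatible pairs and words in $W_m$ with the prescribed origin completes the induction.

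The main (and really only) point requiring care is the bijection between pairs $(w',x)$ and words $w\in W_m$, but this is immediate from (H1), which is the entire purpose of that axiom. No subtle identification or convergence issue arises, since all sums are finite. So the argument is just: unfold (\ref{rel1c}) one coordinate at a time, and reassemble using unique factorization of words.
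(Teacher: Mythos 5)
Your proof is correct and is exactly the argument the paper intends: the paper's proof reads in its entirety ``Using (H1), this follows by induction from (\ref{rel1c})'', and your write-up simply fills in that induction, including the key bijection between words $w\in W_m$ and compatible pairs $(w',x)$ furnished by (H1). No issues.
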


\begin{proof}
Using (H1), this follows by induction from (\ref{rel1c}).
\end{proof}

\begin{lemma}\label{5} $s_{u,u}s_{v,v}=0$ if $\s(u)=\s(v)$ and $u \ne v$.
\end{lemma}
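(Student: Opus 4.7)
The plan is to reduce everything to (\ref{rel1d}), which handles only the shape-$0$ case, by combining Lemma~\ref{f2} with the elementary $C^*$-algebraic fact that a finite sum of projections which is itself a projection must consist of pairwise orthogonal summands.

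I first fix $m=\sigma(u)=\sigma(v)$ and decompose $u=\bar u_0 u'$, $v=\bar v_0 v'$ via Corollary~\ref{4}, where $\bar u_0,\bar v_0\in\overline W_0$ record the initial decorated letters and $u',v'\in W_m$. By (\ref{rel1a}) and (\ref{rel1b}) every $s_{x,x}$ appearing is a projection. Applying Lemma~\ref{f2} to $s_{\bar u_0,\bar u_0}$ gives
$$s_{\bar u_0,\bar u_0}=\sum_{\substack{w\in W_m\\ o(w)=t(\bar u_0)}}s_{\bar u_0 w,\bar u_0 w},$$
a finite sum (since $A$, hence $W_m$, is finite) of projections whose total is itself a projection. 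The standard observation---when $P=\sum P_i$ with each of $P,P_i$ a projection, $P-P_i=\sum_{j\ne i}P_j\ge 0$ forces $PP_i=P_iP=P_i$, and then $0=\sum_{j\ne i}P_iP_jP_i$ (a sum of positives) yields $(P_iP_j)(P_iP_j)^*=P_iP_jP_i=0$, hence $P_iP_j=0$---therefore delivers both the pairwise orthogonality of the summands and the domination $s_{u,u}s_{\bar u_0,\bar u_0}=s_{\bar u_0,\bar u_0}s_{u,u}=s_{u,u}$, together with the analogous identity for $v$.

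To finish I split on whether $\bar u_0$ and $\bar v_0$ coincide. If $\bar u_0=\bar v_0$, then $u\ne v$ forces $u'\ne v'$, so $s_{u,u}=s_{\bar u_0 u',\bar u_0 u'}$ and $s_{v,v}=s_{\bar u_0 v',\bar u_0 v'}$ are two \emph{distinct} summands in the orthogonal expansion above, and the product vanishes. If $\bar u_0\ne\bar v_0$, then (\ref{rel1d}) gives $s_{\bar u_0,\bar u_0}s_{\bar v_0,\bar v_0}=0$; inserting the domination identities,
$$s_{u,u}s_{v,v}=s_{u,u}s_{\bar u_0,\bar u_0}s_{\bar v_0,\bar v_0}s_{v,v}=0.$$

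The main obstacle, such as it is, lies in recognizing that Lemma~\ref{f2} is secretly an orthogonal decomposition of projections; once that general $C^*$-algebraic observation is made, the rest is bookkeeping. The only care required is to separate the two meanings of ``$u\ne v$ with $\sigma(u)=\sigma(v)$''---different decorated starting letters, handled by (\ref{rel1d}), versus different underlying shape-$m$ functions, handled by orthogonality within the Lemma~\ref{f2} expansion.
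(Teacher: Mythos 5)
Your proof is correct, but it is organized differently from the paper's. The paper argues by induction on the shape: the base case is relation (\ref{rel1d}), and the inductive step passes from $m$ to $m+e_j$ using the one-step expansion (\ref{rel1c}), splitting on whether $u'\vert_{[0,m]}$ and $v'\vert_{[0,m]}$ differ (in which case the inductive hypothesis applies via the dominations $s_{u,u}\ge s_{u',u'}$ and $s_{v,v}\ge s_{v',v'}$) or coincide (in which case $s_{u',u'}$ and $s_{v',v'}$ are distinct terms of a single sum of projections and are ``therefore orthogonal''). You instead collapse the whole induction into a single dichotomy at shape $0$: either the decorated initial letters $\overline{u}_0,\overline{v}_0$ coincide, so that $s_{u,u}$ and $s_{v,v}$ are distinct summands of one Lemma~\ref{f2} expansion of $s_{\overline{u}_0,\overline{u}_0}$ and orthogonality of summands finishes the argument, or they differ, in which case (\ref{rel1d}) together with the dominations $s_{u,u}\le s_{\overline{u}_0,\overline{u}_0}$ and $s_{v,v}\le s_{\overline{v}_0,\overline{v}_0}$ does. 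The two arguments rest on exactly the same ingredients --- relation (\ref{rel1d}), the sum decompositions derived from (\ref{rel1c}), and the fact that a finite sum of projections which is itself a projection has pairwise orthogonal summands --- but yours has the merit of making that last $C^*$-algebraic fact explicit and correctly proved (the paper invokes it without justification) and of delegating the induction entirely to Lemma~\ref{f2}, which is already established. There is no circularity, since Lemma~\ref{f2} uses only (H1) and (\ref{rel1c}); both proofs are sound.
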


\begin{proof}
The case $\s(u)=\s(v)=0$ is exactly the relation (\ref{rel1d}).
Assume that the assertion is true whenever $\s(u)=\s(v)=m$.
Let  $\s(u')=\s(v')=m+e_j$ and let $u=u'\vert_{[0,m]}$, $v=v'\vert_{[0,m]}$.
By relation (\ref{rel1c}), we have that
$s_{u,u}=\sum_w s_{uw,uw}$ where the sum is over $w \in W_{e_j}$ such that
$o(w)=t(u)$. Since $s_{u',u'}$ is one of the terms of the
preceding sum we have $s_{u,u} \ge s_{u',u'}$. Similarly $s_{v,v} \ge s_{v',v'}$.
If $u \ne v$, this proves that $s_{u',u'}s_{v',v'}=0$, since by induction $s_{u,u}s_{v,v}=0$.
On the other hand, if $u=v$ then  $s_{u',u'}$, $s_{v',v'}$ are distinct terms
in the sum $\sum_a s_{uw,uw}$ and are therefore orthogonal.
\end{proof}

\begin{remark}\label{finitedecorate} If\ $\oW_0=D$ is finite then it follows from {\rm (\ref{rel1d})} that
$\sum_{u\in \oW_0}s_{u,u}$ is an idempotent.
From Lemma~\ref{f2} it follows that for any $m$,
$\sum_{u\in \oW_m}s_{u,u}=\sum_{u\in \oW_0}s_{u,u}$.
Hence from {\rm (\ref{rel1b})} and Lemma \ref{5}
it follows that $\sum_{u\in \oW_0}s_{u,u}$ is an identity for $\cA$.
\end{remark}

The next lemma is an immediate consequence of the definition of an
enveloping $C^*$-algebra.

\begin{lemma}\label{6}
Let ${\cH}$ be a Hilbert space and for each $u,v \in \overline W$ with
$t(u) = t(v)$ let $S_{u,v} \in \cB(\cH)$. If the $S_{u,v}$
satisfy the relations (\ref{rel1}), then there is a unique *-homomorphism
$\phi : {\cA} \to \cB(\cH)$ such that
$\phi(s_{u,v})=S_{u,v}$. \qed
\end{lemma}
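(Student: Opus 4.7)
The plan is to invoke the universal property built into the definition of $\cA$ as the enveloping $C^*$-algebra of $\cA_0$. The work splits cleanly into three small steps.

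First, I want to produce a $*$-homomorphism $\phi_0 : \cA_0 \to \cB(\cH)$ with $\phi_0(s_{u,v}^0) = S_{u,v}$. The algebra $\cA_0$ is defined by generators and the three relations listed just above the definition of $\cA$; its involution is then forced by $(s_{u,v}^0)^* = s_{v,u}^0$. The hypotheses on the $S_{u,v}$ are the full relations (\ref{rel1}), which include the involutive relation (\ref{rel1a}) together with (\ref{rel1b}), (\ref{rel1c}), (\ref{rel1d}). Thus the assignment $s_{u,v}^0 \mapsto S_{u,v}$ respects all defining relations of $\cA_0$ and also intertwines the involutions, so the universal property of $\cA_0$ as a $*$-algebra on generators and relations yields a unique $*$-homomorphism $\phi_0$ as required.

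Second, I need to see that $\phi_0$ actually lands in the class of representations used to build $\cA$; concretely, this is a boundedness statement. From (\ref{rel1a}) and (\ref{rel1b}) one obtains $S_{u,u}^2 = S_{u,u}$ and $S_{u,u}^* = S_{u,u}$, so each $S_{u,u}$ is a self-adjoint projection; then $S_{u,v}^*S_{u,v} = S_{v,u}S_{u,v} = S_{v,v}$ is a projection, so $S_{u,v}$ is a partial isometry and in particular $\|S_{u,v}\| \le 1$. This bound holds uniformly across every Hilbert-space $*$-representation of $\cA_0$, which is exactly what is needed for the enveloping $C^*$-algebra $\cA$ to be well defined and to dominate $\phi_0$.

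Third, by the defining universal property of the enveloping $C^*$-algebra, the representation $\phi_0$ descends uniquely to a $*$-homomorphism $\phi : \cA \to \cB(\cH)$ with $\phi(s_{u,v}) = S_{u,v}$. Uniqueness of $\phi$ follows because the $s_{u,v}$ generate $\cA$ as a $C^*$-algebra.

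There is no real obstacle: once the partial-isometry observation gives the uniform norm bound, the conclusion is a direct citation of the enveloping-algebra construction recalled from \cite[p.~256]{ck}. The only thing to be careful about is not to appeal to any additional structure of $\cA$ (such as simplicity) that has not yet been established at this stage of the paper.
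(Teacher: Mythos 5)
Your proof is correct and follows the same route as the paper, which simply records the lemma as an immediate consequence of the definition of the enveloping $C^*$-algebra; your elaboration (the universal property of $\cA_0$, the partial-isometry norm bound $\|S_{u,v}\|\le 1$ forced by relations (\ref{rel1a}) and (\ref{rel1b}), and the descent to $\cA$) is exactly the content that makes that citation work.
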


\begin{lemma}\label{7} Any product $s_{u_1,v_1}s_{u_2,v_2}$ can be written as a finite
sum of the generators $s_{u,v}$.
\end{lemma}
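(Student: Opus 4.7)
The plan is to equalize the shapes of the initial letter of the first factor and the final letter of the second factor (in the alphabetical labels of the generators), then use orthogonality of the matrix units on the common shape to collapse the product to a sum of single generators.

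First I would set $n_1 = \s(v_1)$, $n_2 = \s(u_2)$, and $n = n_1 \vee n_2$. Applying Lemma~\ref{f2} with $m = n - n_1$ to $s_{u_1,v_1}$ and with $m = n - n_2$ to $s_{u_2,v_2}$, I would write
\begin{equation*}
s_{u_1,v_1} = \sum_{\substack{w\in W_{n-n_1}\\ o(w)=t(v_1)}} s_{u_1 w, v_1 w},
\qquad
s_{u_2,v_2} = \sum_{\substack{w'\in W_{n-n_2}\\ o(w')=t(u_2)}} s_{u_2 w', v_2 w'},
\end{equation*}
noting that the products $u_1 w$, $v_1 w$, $u_2 w'$, $v_2 w'$ all exist by Corollary~\ref{4}, and that both $v_1 w$ and $u_2 w'$ have shape exactly $n$.

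Next I would multiply the two expansions and insert the idempotents $s_{v_1 w, v_1 w} = {s_{u_1 w, v_1 w}}^* s_{u_1 w, v_1 w}$ and $s_{u_2 w', u_2 w'} = s_{u_2 w', v_2 w'}{s_{u_2 w', v_2 w'}}^*$ to write
\begin{equation*}
s_{u_1 w, v_1 w}\, s_{u_2 w', v_2 w'}
  = s_{u_1 w, v_1 w}\, s_{v_1 w, v_1 w}\, s_{u_2 w', u_2 w'}\, s_{u_2 w', v_2 w'}.
\end{equation*}
Since $v_1 w$ and $u_2 w'$ both have shape $n$, Lemma~\ref{5} forces the middle product $s_{v_1 w, v_1 w}\, s_{u_2 w', u_2 w'}$ to vanish unless $v_1 w = u_2 w'$, in which case it equals the common projection. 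Collapsing by (\ref{rel1b}) then gives $s_{u_1 w, v_2 w'}$, which is a generator (note $t(u_1 w) = t(w) = t(w') = t(v_2 w')$ follows from $v_1 w = u_2 w'$).

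Putting everything together yields
\begin{equation*}
s_{u_1,v_1}\, s_{u_2,v_2} = \sum_{\substack{w\in W_{n-n_1},\ w'\in W_{n-n_2}\\ v_1 w = u_2 w'}} s_{u_1 w,\, v_2 w'},
\end{equation*}
a finite sum (since $A$ is finite and the shapes $n - n_i$ are fixed) of generators. There is no real obstacle here: once one sees that $n = n_1 \vee n_2$ is the right common shape, Lemmas~\ref{f2} and \ref{5} do all the work. The only thing to watch carefully is the bookkeeping with shapes and the verification that each surviving term really is of the form $s_{u,v}$ with $t(u)=t(v)$.
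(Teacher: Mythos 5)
Your proposal is correct and follows essentially the same route as the paper: expand both factors via Lemma~\ref{f2} to a common shape $m \ge \s(v_1), \s(u_2)$ (the paper takes an arbitrary such $m$, you take the minimal one $\s(v_1)\vee\s(u_2)$, which changes nothing), then use Lemma~\ref{5} to annihilate the cross terms and (\ref{rel1b}) to collapse the survivors to single generators. The bookkeeping you flag at the end checks out, so there is nothing to add.
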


\begin{proof}
Choose $m \in \ZZ^r$ with $m \ge \s(v_1)$ and $m \ge \s(u_2)$.
Use Lemma~\ref{f2} to write $s_{u_1,v_1}$ as a sum of terms $s_{u_3,v_3}$ with $\s(v_3)=m$.
 Likewise, write
$s_{u_2,v_2}$ as a sum of terms $s_{u_4,v_4}$ with $\s(u_4)=m$.
Now in the product $s_{u_1,v_1}s_{u_2,v_2}$ each term has the form
$$s_{u_3,v_3}s_{u_4,v_4}=\begin{cases}
s_{u_3,v_4}& \text{if $v_3=u_4$},\\
s_{u_3,v_3}s_{v_3,v_3}s_{u_4,u_4}s_{u_4,v_4}=0& \text{if $v_3 \ne u_4$}
\end{cases}$$
by Lemma~\ref{5}. The result follows immediately.
\end{proof}

\begin{corollary}\label{8} The $C^*$-algebra ${\cA}$ is the closed linear span
of the set
\begin{equation*}
\{s_{u,v}; u,v \in \overline W \ \text{and} \ t(u) = t(v) \}.
\end{equation*}
\qed
\end{corollary}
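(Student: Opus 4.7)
The plan is to observe that the linear span of the $s_{u,v}$ is already closed under products and under the adjoint operation, hence is a $*$-subalgebra of $\cA$ whose closure must be all of $\cA$.

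First I would recall that, by construction, $\cA$ is the enveloping $C^*$-algebra of the $*$-algebra $\cA_0$ generated by the symbols $s_{u,v}^0$; equivalently, the $*$-subalgebra of $\cA$ generated by $\{s_{u,v} : u,v\in \overline W,\ t(u)=t(v)\}$ is norm-dense in $\cA$. It therefore suffices to show that the ordinary linear span of this set of generators is already a $*$-subalgebra.

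Closure under the $*$-operation is immediate from relation (\ref{rel1a}), which asserts $s_{u,v}^*=s_{v,u}$, again a generator. Closure under multiplication of two generators is precisely the content of Lemma~\ref{7}. A straightforward induction on the number of factors then shows that any finite product of generators is a finite linear combination of generators: given a product $s_{u_1,v_1}s_{u_2,v_2}\cdots s_{u_k,v_k}$, apply Lemma~\ref{7} to the leftmost pair to rewrite the product as a sum of terms with one fewer factor, and iterate.

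Thus the linear span $L$ of $\{s_{u,v}\}$ is a $*$-subalgebra of $\cA$ containing all generators, so its norm closure coincides with $\cA$. There is really no hard step here; the only thing to be careful about is invoking the definition of the enveloping $C^*$-algebra correctly so that density of the $*$-algebra generated by the $s_{u,v}$ is available from the outset.
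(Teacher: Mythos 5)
Your proposal is correct and follows exactly the route the paper intends: Corollary~\ref{8} is stated with no written proof precisely because it is the immediate consequence of Lemma~\ref{7} (closure of the span under products) together with relation (\ref{rel1a}) (closure under adjoints) and the fact that $\cA$ is by construction generated by the $s_{u,v}$. Your explicit induction on the number of factors just spells out what the paper leaves tacit.
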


\begin{lemma}\label{nonzero} The algebra $\cA$ is nonzero.
\end{lemma}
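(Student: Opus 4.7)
The plan is to construct a nonzero Hilbert space representation of the relations (\ref{rel1a})--(\ref{rel1d}) and invoke Lemma~\ref{6}.

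Let $\overline X$ denote the set of \emph{infinite decorated paths}: pairs $\overline x = (d, x)$ with $d \in D$, $x : \ZZ_+^r \to A$, $x(0) = \delta(d)$, and $x\vert_{[0,m]} \in W_m$ for every $m \in \ZZ_+^r$. First I would show $\overline X$ is nonempty and contains a path with any prescribed decoration. Fix $d \in D$; for each $m$ let $F_m = \{x \in A^{\ZZ_+^r} : x(0) = \delta(d),\ x\vert_{[0,m]} \in W_m\}$. Each $F_m$ is closed in the compact product space $A^{\ZZ_+^r}$, and nonempty by Lemma~\ref{ends} together with (H0)--(H2); since the family $\{F_m\}$ has the finite intersection property (as $F_m \cap F_{m'} \supseteq F_{m \vee m'}$), the intersection $\bigcap_m F_m$ is nonempty, producing $\overline x = (d,x) \in \overline X$.

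Put $\cH = \ell^2(\overline X)$ with orthonormal basis $\{\xi_{\overline x}\}_{\overline x \in \overline X}$. For $\ou \in \oW_m$, $\ov \in \oW_n$ with $t(\ou) = t(\ov)$, and $\overline x \in \overline X$ satisfying $\overline x\vert_{[0,n]} = \ov$, define a new path $\ou * \overline x \in \overline X$ as follows: for each $m' \ge 0$, apply Corollary~\ref{4} to $\ou$ and the word $k \mapsto x(n+k)$ on $[0,m']$ to obtain a unique element of $\oW_{m+m'}$; by uniqueness in (H1) these truncations fit together into a single $\overline y = \ou * \overline x \in \overline X$ with $\overline y\vert_{[0,m]} = \ou$ and $y(m+k) = x(n+k)$ for every $k \in \ZZ_+^r$. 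Then set
\begin{equation*}
S_{u,v} \xi_{\overline x} = \begin{cases} \xi_{\ou * \overline x} & \text{if } \overline x\vert_{[0,n]} = \ov, \\ 0 & \text{otherwise.} \end{cases}
\end{equation*}

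Verifying the relations is routine. The adjoint relation holds because the nonvanishing of $\langle S_{u,v} \xi_{\overline x}, \xi_{\overline y}\rangle$ amounts to the symmetric conditions $\overline x\vert_{[0,n]} = \ov$, $\overline y\vert_{[0,m]} = \ou$, and $x(n+\cdot) = y(m+\cdot)$. Composition $S_{u,v} S_{v,w} = S_{u,w}$ follows at once. For the sum relation, if $\overline x\vert_{[0,n]} = \ov$ then exactly one $w \in W_{e_j}$ with $o(w)=t(v)$ contributes, namely $w = x\vert_{[n,n+e_j]}$, and associativity of the product gives $(uw) * \overline x = \ou * \overline x$. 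The rank-zero orthogonality is immediate since distinct decorations cannot occur on the same path. Lemma~\ref{6} then yields $\phi: \cA \to \cB(\cH)$ with $\phi(s_{u,v}) = S_{u,v}$. For any $d \in D$, $\phi(s_{d,d})$ is the orthogonal projection onto the nonempty set $\{\xi_{\overline x} : \overline x$ has decoration $d\}$, hence nonzero; therefore $s_{d,d} \ne 0$ and $\cA$ is nonzero.

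The main obstacle is the well-definedness of $\ou * \overline x$ as a function on all of $\ZZ_+^r$: when $r \ge 2$ the set $[0, m] \cup (m + \ZZ_+^r)$ does not exhaust $\ZZ_+^r$, so the corner values of $y$ outside these two regions must still be supplied. This is handled uniformly by applying Corollary~\ref{4} to each finite truncation and invoking the uniqueness part of (H1) to see that the truncations are mutually consistent, so $y$ is unambiguously defined on all of $\ZZ_+^r$.
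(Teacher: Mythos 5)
Your proof is correct and follows essentially the same route as the paper's: both represent $\cA$ on $\ell^2$ of the set of infinite words via prefix-replacement partial isometries and invoke Lemma~\ref{6}. Your version is somewhat more careful than the paper's (which works with undecorated $W_\infty$ and leaves the nonemptiness and the well-definedness of the infinite concatenation implicit), but the underlying construction is the same.
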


\begin{proof}
We must construct a nonzero *-homomorphism from $\cA$ into
$\cB(\cH)$ for some Hilbert space $\cH$.
Consider the set of infinite words
$$W_{\infty} = \{ w: \ZZ^r_+ \to A ; M_j(w(l+e_j),w(l)) = 1\ \text{whenever}\ l \ge 0 \},$$
and define the product
$uv$ for $u \in \overline W$ and $v \in W_{\infty}$ exactly as in Definition~\ref{word-prod}.
Let $\cH = l^2(W_{\infty})$ and define
$$(\phi(s_{u,v}))(\delta_w)=\begin{cases}
\delta_{uw_1} & \text{if $w=vw_1$ for some $w_1 \in W_{\infty}$,}\\
0& \text{otherwise}.
\end{cases}$$
It is easy to check that the operators $\{\phi(s_{u,v});\ u,v \in \overline W \ \text{and} \ t(u) = t(v) \}$
satisfy the relations~\ref{rel1} and it
follows from Lemma~\ref{6} that $\phi$ extends to a *-homomorphism of $\cA$.
\end{proof}

\begin{remark}
The Hilbert space $\cH = l^2(W_{\infty})$ is not separable. However, since the algebra
$\cA$ is countably generated, there exist nonzero separable, $\cA$-stable subspaces of $\cH$,
 and in particular,
there exist nontrivial representations of $\cA$ on  separable Hilbert space.
\end{remark}

\begin{lemma}\label{nonzero+} If $\phi$ is a nontrivial representation of $\cA$, and if $u \in \oW$
then $\phi(s_{u,u})\ne 0$. In particular $s_{u,u} \ne 0$.
\end{lemma}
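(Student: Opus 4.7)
My plan is to produce first some decorated word $v_0$ with $\phi(s_{v_0,v_0}) \ne 0$, and then to show that for any $u \in \oW$ the projection $s_{u,u}$ dominates a projection Murray--von Neumann equivalent to $s_{v_0,v_0}$, forcing $\phi(s_{u,u}) \ne 0$ as well.

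For the first reduction, Corollary~\ref{8} says that $\cA$ is the closed linear span of the generators $s_{u,v}$, so nontriviality of $\phi$ yields some $u_0,v_0 \in \oW$ with $\phi(s_{u_0,v_0}) \ne 0$. The relation $s_{u_0,v_0}^* s_{u_0,v_0} = s_{v_0,v_0}$ then forces $\phi(s_{v_0,v_0}) \ne 0$.

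For the second step, fix an arbitrary $u \in \oW$ and apply Lemma~\ref{f1} with $b = t(v_0)$ to produce $x \in W$ such that $ux$ exists, $\s(x)\ne 0$, and $t(ux) = t(v_0)$. Thus $s_{ux,v_0}$ is a well-defined partial isometry, and relations (\ref{rel1a}) and (\ref{rel1b}) give
\[
s_{ux,v_0}^* s_{ux,v_0} = s_{v_0,v_0} \qquad \text{and} \qquad s_{ux,v_0}\,s_{ux,v_0}^* = s_{ux,ux}.
\]
So $\phi(s_{ux,ux})$ and $\phi(s_{v_0,v_0})$ are Murray--von Neumann equivalent in $\phi(\cA)$, and hence $\phi(s_{ux,ux}) \ne 0$.

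To transfer this back to $u$, I invoke Lemma~\ref{f2} with $m = \s(x)$ to write $s_{u,u}$ as a sum of pairwise orthogonal projections $s_{uw,uw}$ (orthogonality by Lemma~\ref{5}), one summand being exactly $s_{ux,ux}$. Hence $s_{u,u} \ge s_{ux,ux}$, which yields $\phi(s_{u,u}) \ne 0$. The assertion $s_{u,u} \ne 0$ then follows immediately from the existence of a nontrivial representation, supplied by Lemma~\ref{nonzero}. The argument is essentially formal; the only real leverage is Lemma~\ref{f1}, which arranges a common terminal letter for $u$ and $v_0$ so that the bridging partial isometry $s_{ux,v_0}$ exists in the first place.
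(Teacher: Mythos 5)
Your proposal is correct and is essentially the contrapositive of the paper's own argument: the paper assumes $\phi(s_{u,u})=0$ and propagates the vanishing to every $\phi(s_{v,v'})$ via Lemma~\ref{ends}, Lemma~\ref{f2} and Remark~\ref{equiv}, while you start from a nonvanishing $\phi(s_{v_0,v_0})$ and propagate the nonvanishing back to $s_{u,u}$ using the same bridging-word and subprojection mechanism. The logical content and the lemmas invoked are the same, so this counts as the same approach.
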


\begin{proof}
Suppose $\phi(s_{u,u}) = 0$. Choose any $v\in \oW$. Use Lemma~\ref{ends} to
find $w \in W$ such that $o(w)=t(u)$ and $t(w)=t(v)$. By Lemma~\ref{f2}, we have
$0 =\phi(s_{u,u}) = \sum \phi(s_{uw',uw'})$, the sum being taken over all $w' \in W$ such that
$\s(w')=\s(w)$ and $o(w')=t(u)$. Thus $\phi(s_{uw,uw})=0$. By  Remark~\ref{equiv}
it follows that that $\phi(s_{uw,v})=0$, that $\phi(s_{v,v})=0$, and finally that
$\phi(s_{v,v'}) = 0$ whenever $t(v)=t(v')$.
Hence $\phi$ is trivial.
\end{proof}

\begin{remark} When $r=1$, the algebra $\cA$ is a simple Cuntz-Krieger algebra.
More precisely, if we write $M=M_1^t$, then the Cuntz-Krieger algebra $\cO_M$ is generated by
a set of partial isometries $\{S_a ; a\in A\}$ satisfying the relations
$S_a^*S_a=\sum_bM(a,b)S_bS_b^*$. If $u\in W$, let
$S_u=S_{u(0)}S_{u(1)}\dots S_{t(u)}$ and if $v\in W$ with $t(u)=t(v)$, define
$S_{u,v}=S_uS_v^*$ (c.f. \cite[Lemma 2.2]{ck}).
The map $s_{u,v}\mapsto S_{u,v}$ establishes an isomorphism of $\cA$ with $\cO_M$.
Tensor products of ordinary Cuntz-Krieger algebras can be identified as higher rank Cuntz-Krieger
algebras $\cA$. If $\cA_1,\cA_2$ are simple rank one Cuntz-Krieger algebras, with
corresponding matrices $M_1, M_2$ and alphabets $A_1, A_2$
then $\cA_1 \otimes \cA_2$ is the algebra $\cA$ arising from
the pair of matrices $M_1\otimes I, I\otimes M_2$ and the alphabet $A_1 \times A_2$.
More interesting examples arise from group actions on affine buildings.
The details for some $r=2$ algebras arising in this way are given in Section~\ref{boundary-algebra}.
\end{remark}

\section{The AF subalgebra}\label{The AF subalgebra}

If $m \in \ZZ^r_+$, let  $\cF_m$ denote the subalgebra of $\cA$ generated by the
elements $s_{u,v}$ for $u,v \in \overline W_m$.

\begin{lemma}\label{fm} There exists an isomorphism
$\cF_m\cong \bigoplus_{a\in A} \cK(l^2(\{w \in \overline W; \s(w)=m, t(w)=a\})).$
\end{lemma}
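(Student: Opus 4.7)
The plan is to show that the generators $s_{u,v}$ of $\cF_m$ decompose into mutually orthogonal systems of matrix units indexed by the sets $T_a = \{w \in \oW_m : t(w) = a\}$ for $a \in A$, and that each such system generates the compacts on $\ell^2(T_a)$. I will write $\cH_a = \ell^2(T_a)$.

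\textbf{Step 1: matrix unit relations within a single $a$.} For $u,v \in T_a$ and $u',v' \in T_a$, I check that $\{s_{u,v}\}_{u,v \in T_a}$ satisfies the matrix unit relations. The involution $s_{u,v}^* = s_{v,u}$ is (\ref{rel1a}). For the multiplication $s_{u,v} s_{v',w} = \delta_{v,v'} s_{u,w}$: if $v = v'$ this is (\ref{rel1b}); if $v \ne v'$, write $s_{u,v} s_{v',w} = s_{u,v} s_{v,v} s_{v',v'} s_{v',w}$ (using $s_{u,v}s_{v,v} = s_{u,v}$ from \ref{rel1b} and similarly for the other side), and observe that $s_{v,v} s_{v',v'} = 0$ by Lemma~\ref{5}, since $\s(v) = \s(v') = m$ and $v \ne v'$.

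\textbf{Step 2: orthogonality between distinct $a$'s.} If $u \in T_a$ and $u' \in T_{a'}$ with $a \ne a'$, then $t(u) \ne t(u')$ forces $u \ne u'$, while $\s(u) = \s(u') = m$, so Lemma~\ref{5} gives $s_{u,u} s_{u',u'} = 0$. Using the factorization $s_{u,v} = s_{u,v} s_{v,v}$ on one side and $s_{u',v'} = s_{u',u'} s_{u',v'}$ on the other, this propagates to $s_{u,v} s_{u',v'} = 0$ and, by taking adjoints, to full orthogonality between the $a$- and $a'$-matrix units. Hence the closed linear spans $\cF_m^a := \overline{\operatorname{span}}\{s_{u,v} : u,v \in T_a\}$ are pairwise orthogonal $C^*$-subalgebras of $\cF_m$.

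\textbf{Step 3: identifying $\cF_m^a$ with $\cK(\cH_a)$.} For each finite subset $F \subseteq T_a$, the matrix unit relations from Step~1 mean that $e_{u,v} \mapsto s_{u,v}$ extends to a $*$-homomorphism $\pi_F : M_F(\CC) \to \cA$. By Lemma~\ref{nonzero+} the projection $s_{u,u}$ is nonzero for every $u$, so $\pi_F$ is nonzero; since $M_F(\CC)$ is simple, $\pi_F$ is injective, hence isometric. The system $\{\pi_F\}$ is compatible as $F$ grows, and the inductive limit of the finite matrix algebras $M_F(\CC)$ is precisely the algebra of compact operators $\cK(\cH_a)$. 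Passing to the closure in $\cA$ yields $\cF_m^a \cong \cK(\cH_a)$, and this isomorphism sends the rank-one operator $e_{u,v}$ to $s_{u,v}$.

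\textbf{Step 4: assembling the direct sum.} Since the $\cF_m^a$ are pairwise orthogonal $C^*$-subalgebras whose union generates $\cF_m$ (by definition, $\cF_m$ is generated by $\{s_{u,v} : u,v \in \oW_m,\ t(u) = t(v)\}$, and every such pair lies in a unique $T_a$), their internal sum is a $C^*$-direct sum, giving
\[
\cF_m \;\cong\; \bigoplus_{a \in A} \cK(\cH_a),
\]
where the summand is zero whenever $T_a = \emptyset$. The only non-bookkeeping input is Lemma~\ref{nonzero+}, which prevents the matrix units from collapsing; without it the homomorphisms $\pi_F$ could in principle vanish. This is the step where the global structure of $\cA$ (rather than mere algebraic manipulation of the relations) enters.
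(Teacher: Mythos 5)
Your proof is correct and follows essentially the same route as the paper's: both identify the $s_{u,v}$ with $t(u)=t(v)=a$ as a system of matrix units via relations (\ref{rel1a}), (\ref{rel1b}) and Lemma~\ref{5}, and assemble the direct sum over $a\in A$. You additionally make explicit the appeal to Lemma~\ref{nonzero+} to guarantee the matrix units do not vanish (hence injectivity), a point the paper's one-line proof leaves implicit; this is a legitimate and non-circular use, since that lemma precedes this one.
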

\begin{proof}
Let $u,v \in \oW_m$ with $t(u)=t(v)=a$. Consider the map
$E^a_{\delta_u,\delta_v}\mapsto s_{u,v}$, where $E^a_{\delta_u,\delta_v}$
denotes a standard matrix unit in $\cK(l^2(\{w \in \overline W; \s(w)=m, t(w)=a\}))$.
This extends to a map which is an isomorphism according to equations (\ref{rel1a}),
(\ref{rel1b}) and Lemma~\ref{5}.
\end{proof}
The relations (\ref{rel1c}) show that there is
a natural embedding of $\cF_m$ into $\cF_{m+e_j}$.
The $C^*$-algebras $\{\cF_m: m \in \ZZ^r_+\}$ form a directed
system of  $C^*$-algebras in the sense of \cite[p. 864]{kr}.
By \cite[Proposition 11.4.1]{kr} there is an essentially unique $C^*$-algebra
$\cF$  in which the union of the algebras $\cF_m$ is dense, namely
the direct limit of these algebras. We have the following
commuting diagram of inclusions.

\begin{equation*}
\begin{CD}
\cF_{m+e_k}   @>>>   \cF_{m+e_j+e_k}\\
@AAA                                  @AAA\\
\cF_m                  @>>>      \cF_{m+e_j}
\end{CD}
\end{equation*}

We may equally well regard $\cF$ as the closure of
$\bigcup_{j=1}^{\infty}\cF_{jp}$, where $p=(1,1,\dots,1)$. In
particular $\cF$ is an $AF$-algebra.

\begin{proposition}\label{F}
Let $\phi$ be a nonzero homomorphism from $\cA$ into some $C^*$-algebra.
Then the restriction of $\phi$ to $\cF$ is an isomorphism.
\end{proposition}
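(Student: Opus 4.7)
The plan is to exploit the AF-structure of $\cF$ together with Lemma~\ref{nonzero+}. Since $\cF$ is the norm closure of the increasing union $\bigcup_m \cF_m$ and $\phi$ is a $*$-homomorphism between $C^*$-algebras (hence continuous), injectivity of $\phi|_\cF$ will follow from injectivity of each $\phi|_{\cF_m}$: an injective $*$-homomorphism is automatically isometric, so isometricity on the dense union $\bigcup_m \cF_m$ forces isometricity, and hence injectivity, on $\cF$. So the whole problem reduces to showing $\phi$ is injective on each $\cF_m$.

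By Lemma~\ref{fm}, we have
\begin{equation*}
\cF_m \;\cong\; \bigoplus_{a \in A} \cK(\cH_a^m), \qquad \cH_a^m = \ell^2\bigl(\{w \in \oW : \s(w)=m,\ t(w)=a\}\bigr),
\end{equation*}
where the sum is a $c_0$-direct sum of simple $C^*$-algebras (omitting the summands for which $\cH_a^m = 0$). A $*$-homomorphism out of such a direct sum is injective if and only if it is nonzero on every nonzero summand, because a $*$-homomorphism out of a simple $C^*$-algebra is either zero or injective.

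So fix $m$ and any $a \in A$ with $\cH_a^m \ne 0$; pick some $u \in \oW_m$ with $t(u)=a$. Under the isomorphism of Lemma~\ref{fm} the projection $s_{u,u}$ is identified with the rank-one projection onto $\CC\delta_u$ inside the summand $\cK(\cH_a^m)$. Since $\phi$ is nontrivial by hypothesis, Lemma~\ref{nonzero+} gives $\phi(s_{u,u}) \ne 0$. Thus $\phi$ is nonzero on $\cK(\cH_a^m)$ and therefore injective on it. Ranging over $a$, $\phi$ is injective on $\cF_m$.

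Combining with the first paragraph, $\phi|_\cF$ is isometric, hence an isomorphism of $\cF$ onto $\phi(\cF)$. The only real input beyond basic $C^*$-algebra facts is Lemma~\ref{nonzero+}, which in turn rests on (H0)--(H2) through Lemma~\ref{ends}; the main (and only) conceptual obstacle is realising that Lemma~\ref{nonzero+} gives exactly what one needs on each minimal projection of each matrix block, after which simplicity of $\cK(\cH_a^m)$ does the rest.
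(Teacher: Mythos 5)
Your proof is correct and follows essentially the same route as the paper: both reduce to the finite-dimensional-block decomposition of $\cF_m$ from Lemma~\ref{fm}, invoke simplicity of the compacts on each summand, and use Lemma~\ref{nonzero+} to rule out $\phi(s_{u,u})=0$. The paper phrases it as a contradiction while you argue directly and spell out the standard AF fact that injectivity (hence isometricity) on each $\cF_m$ passes to the closure, but the substance is identical.
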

\begin{proof}
Suppose that $\phi$ is not an isomorphism on $\cF$.
Then $\phi$ is not an isomorphism on some $\cF_m$. Since
$\bigoplus_{a\in A} \cK(l^2(\{w \in \overline W; \s(w)=m, t(w)=a\})) \cong \cF_m$ (Lemma~\ref{fm}),
it follows from simplicity of the algebra of compact operators that
$\phi(s_{u,u})=0$ for some $u$, contradicting Lemma~\ref{nonzero+}.
\end{proof}

Define an action $\alpha$ of the $r$-torus $\TT^r$ on $\cA$ as follows.
If $\s(u)-\s(v) =m \in \ZZ^r$ and $t=(t_1,\dots,t_r) \in \TT^r$,
let $\alpha_t(s_{u,v}) = t^ms_{u,v}$, where $t^m=t_1^{m_1}t_2^{m_2}\dots t_r^{m_r}$.
The elements $\alpha_t(s_{u,v})$ satisfy the relations (\ref{rel1})
and generate the $C^*$-algebra $\cA$. By the universal property of $\cA$
it follows that $\alpha_t$ extends to an automorphism
of $\cA$. It is easy to see that that $t\mapsto \alpha_t$ is an action.
It is also clear that $\alpha_t$ fixes all elements $s_{u,v}$ with $\s(u)=\s(v)$
and so fixes $\cF$ pointwise. We now show that $\cF =\cA^{\alpha}$,
the fixed point subalgebra of $\cA$. Consider the linear map on $\cA$ defined by
\begin{equation}\label{expectation}
\pi(x)= \int_{\TT^r}\alpha_t(x)dt, \qquad \text{for} \ x \in \cA
\end{equation}
where $dt$ denotes normalized Haar measure on $\TT^r$.

\begin{lemma}\label{b} Let $\pi$, $\cF$ be as above.
\begin{enumerate}
\item The map $\pi$ is a faithful conditional expectation from $\cA$ onto $\cF$.
\item $\cF =\cA^{\alpha}$, the fixed point subalgebra of $\cA$.
\end{enumerate}
\end{lemma}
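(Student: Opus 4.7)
The plan is to verify the four properties of a faithful conditional expectation in turn, and then deduce (2) as an immediate corollary.

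First I would check that $\pi(\cA) \subseteq \cF$. For a generator $s_{u,v}$ we have $\alpha_t(s_{u,v}) = t^{\s(u)-\s(v)} s_{u,v}$, so direct integration against Haar measure gives
\[
\pi(s_{u,v}) = \begin{cases} s_{u,v} & \text{if } \s(u) = \s(v), \\ 0 & \text{otherwise.} \end{cases}
\]
In the first case $s_{u,v} \in \cF_{\s(u)} \subseteq \cF$. Since each $\alpha_t$ is an isometry and the action $t \mapsto \alpha_t(x)$ is continuous for every $x$ (by a standard $\epsilon/3$ argument on the dense linear span of generators given in Corollary~\ref{8}, together with $\|\alpha_t\|=1$), $\pi$ is a well-defined contractive linear map on $\cA$. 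Continuity plus Corollary~\ref{8} then force $\pi(\cA) \subseteq \cF$.

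Second, $\pi$ acts as the identity on $\cF$, because $\alpha_t$ fixes each $s_{u,v}$ with $\s(u)=\s(v)$, hence fixes all of $\cF$ pointwise; integrating the constant function gives $\pi|_\cF = \mathrm{id}_\cF$. Combined with $\pi(\cA) \subseteq \cF$ this gives $\pi^2 = \pi$, and Tomiyama's theorem (or a direct check using $\alpha_t(axb) = \alpha_t(a)\alpha_t(x)\alpha_t(b)$ for $a,b \in \cF$) shows $\pi$ is a conditional expectation.

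For faithfulness, suppose $x \in \cA$ with $x \ne 0$; I want $\pi(x^*x) \ne 0$. Put $y = x^*x \ge 0$, $y \ne 0$. If $\phi$ is any state on $\cA$, then $t \mapsto \phi(\alpha_t(y))$ is a continuous nonnegative function on $\TT^r$. Choosing a state $\phi$ with $\phi(y) > 0$, this function is strictly positive at $t=0$, hence has strictly positive integral, so $\phi(\pi(y)) = \int \phi(\alpha_t(y))\,dt > 0$. Therefore $\pi(y) \ne 0$, proving faithfulness.

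Finally for (2), the inclusion $\cF \subseteq \cA^\alpha$ has already been observed. Conversely, if $x \in \cA^\alpha$ then $\alpha_t(x) = x$ for all $t$, so $\pi(x) = \int x\,dt = x$, and since $\pi(x) \in \cF$ by (1), we get $x \in \cF$. The main point requiring care is the continuity of $t \mapsto \alpha_t(x)$ needed to justify the integral in the definition of $\pi$ and to make the faithfulness argument work, but this reduces to the norm density of finite sums of generators.
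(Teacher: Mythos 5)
Your proof is correct and follows essentially the same route as the paper: both rest on the computation $\pi(s_{u,v})=0$ unless $\s(u)=\s(v)$, combined with Corollary~\ref{8} and continuity, to identify the range of $\pi$ with $\cF$, while the conditional-expectation and faithfulness properties are the standard facts about averaging over a compact group action (which you spell out and the paper merely cites as easy).
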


\begin{proof}
Since the action $\alpha$ is continuous, it
is easy to see that $\pi$ is a conditional expectation from $\cA$ onto $\cA^{\alpha}$,
and that it is faithful.
 Since $\alpha_t$ fixes $\cF$ pointwise,
$\cF \subset \cA^{\alpha}$. To complete the proof we show that the range
of $\pi$ is contained in (and hence equal to) $\cF$. By continuity of $\pi$ and
Corollary~\ref{8}, it is enough to show that $\pi(s_{u,v}) \in \cF$ for all
$u,v \in \overline W$. This is so because
\begin{equation}\label{pi(x)} \pi(s_{u,v})= s_{u,v}\int_{\TT^r}t^{\s(u)-\s(v)}dt=\begin{cases}
0& \text{if $\s(u)\ne \s(v)$},\\
s_{u,v}& \text{if $\s(u) = \s(v)$}.
\end{cases}
\end{equation}
\end{proof}

\section{Simplicity}\label{sectionsimplicity}

We show that the $C^*$-algebra $\cA$ is simple. Consequently any nontrivial $C^*$-algebra with
generators $S_{u,v}$ satisfying relations (\ref{rel1}) is isomorphic to $\cA$.
Several preliminary lemmas are necessary. Consequences of (H3), their theme is the existence
of words lacking certain periodicities.
Recall that for $m\in \ZZ^r$, $|m|=(|m_1|,\dots ,|m_r|)$.

\begin{lemma} \label{a2}
Let $m \in \ZZ^r$ with $m \ge 0$ and let $a \in A$.
There exists some $l \ge 0$ and some $w \in W_l$
satisfying
\begin{enumerate}
\item If $|p| \le m$ and $p \ne 0$ then
$\tau_pw\vert_{[0,l]\cap[p,p+l]} \ne w\vert_{[0,l]\cap[p,p+l]}$,
\item  $o(w)=a.$
\end{enumerate}
\end{lemma}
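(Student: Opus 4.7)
The plan is to exploit the fact that there are only finitely many nonzero $p\in\ZZ^r$ with $|p|\le m$, so (H3) can be applied to each of them separately, and then the resulting witness words can be strung together into a single long word using Lemma~\ref{ends} as connective tissue. The guiding observation is that non-$p$-periodicity of a word is inherited by any word containing it as a sub-block: if $u$ has two positions $q_1,q_2\in[0,\s(u)]$ with $q_1-q_2=p$ and $u(q_1)\ne u(q_2)$, and $u$ sits inside $w$ at some offset $k\ge 0$, then the translated pair $k+q_1,k+q_2\in[0,\s(w)]$ witnesses $w(k+q_1)\ne w(k+q_2)$ with $(k+q_1)-(k+q_2)=p$, which (after possibly replacing $p$ by $-p$, a symmetry of the periodicity condition) puts the larger of the two translated positions into $[0,\s(w)]\cap[p,p+\s(w)]$ and shows $w$ is not $p$-periodic.

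Concretely, first enumerate the finitely many nonzero vectors $p_1,\dots,p_N$ with $|p_i|\le m$. For each $i$, condition~(H3) supplies a word $u_i\in W$ which is not $p_i$-periodic; equivalently, there exist $q_1^{(i)},q_2^{(i)}\in[0,\s(u_i)]$ with $q_1^{(i)}-q_2^{(i)}=\pm p_i$ and $u_i(q_1^{(i)})\ne u_i(q_2^{(i)})$. Next, invoke Lemma~\ref{ends} repeatedly to produce connector words $v_0,v_1,\dots,v_{N-1}\in W$ satisfying $o(v_0)=a$, $t(v_0)=o(u_1)$, and $o(v_i)=t(u_i)$, $t(v_i)=o(u_{i+1})$ for $1\le i\le N-1$. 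Since at each juncture the terminus matches the following origin, repeated application of Definition~\ref{word-prod} produces the word
\begin{equation*}
w \;=\; v_0\,u_1\,v_1\,u_2\,v_2\,\cdots\,v_{N-1}\,u_N \;\in\; W_l,
\end{equation*}
where $l=\s(v_0)+\s(u_1)+\s(v_1)+\cdots+\s(u_N)\ge 0$.

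To verify the conclusion, condition~(2) is immediate: $o(w)=o(v_0)=a$. For condition~(1), fix any nonzero $p$ with $|p|\le m$, so $p=\pm p_i$ for some $i$. By the uniqueness in~(H1) (and associativity of the product), the restriction of $w$ to the sub-box $[k_i,\,k_i+\s(u_i)]$ is exactly the translate of $u_i$, where $k_i=\s(v_0)+\s(u_1)+\cdots+\s(v_{i-1})$ is the offset at which $u_i$ is embedded. Translating the witnessing positions by $k_i$ produces two positions in $[0,l]$ at difference $p_i$ where $w$ takes distinct values, and this forces $w$ to fail $p$-periodicity on the required overlap region.

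The only real bookkeeping obstacle is making the witness-translation argument in the last step fit the exact overlap region $[0,l]\cap[p,p+l]$ appearing in the definition of $p$-periodicity; but this is automatic once one notes that if $Q_1,Q_2\in[0,l]$ with $Q_1-Q_2=p$ then $Q_1\in[0,l]\cap[p,p+l]$, so there is no further analytic content. All other steps (enumerating finitely many $p_i$, applying (H3), and splicing via Lemma~\ref{ends}) are routine.
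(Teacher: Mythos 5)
Your proposal is correct and follows essentially the same route as the paper's proof: apply (H3) to each of the finitely many nonzero $p$ with $|p|\le m$, splice the resulting witnesses together with connector words from Lemma~\ref{ends} (starting the chain at $a$ to secure $o(w)=a$), and observe that failure of $p$-periodicity on a sub-block propagates to the whole word. Your explicit verification that a witnessing pair $Q_1,Q_2\in[0,l]$ with $Q_1-Q_2=p$ places $Q_1$ in $[0,l]\cap[p,p+l]$ is exactly the content the paper leaves implicit in its remark that if $w=uw_pv$ and $w_p$ is not $p$-periodic then neither is $w$.
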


\begin{proof} Note that if $w=uw_pv$, and if $w_p$ is not $p$-periodic, then neither is $w$.
Apply (H3) to obtain for each nonzero $p$, $|p| \le m$, a word $w_p$
which is not $p$-periodic.
The final word $w$ is obtained by concatenating these words
in some order using ``spacers'' whose existence is guaranteed by Lemma~\ref{ends}.  The construction is
illustrated in Figure~\ref{concatenate}, where the spacer $s_{p,q}$ is chosen so that $o(s_{p,q})=t(w_p)$
and $t(s_{p,q})=o(w_q)$.
\end{proof}

\refstepcounter{picture}
\begin{figure}[htbp]
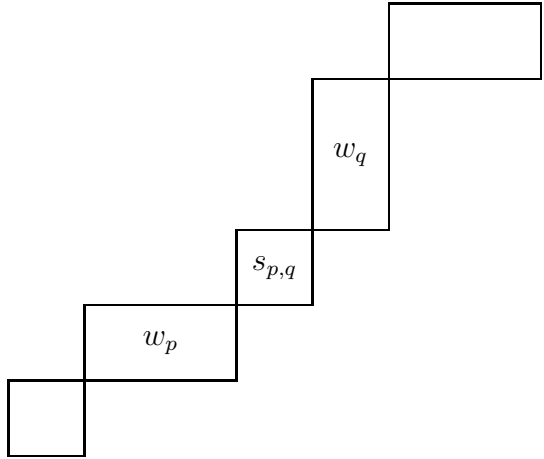
\label{concatenate}
\hfil
\centerline{
\beginpicture
\setcoordinatesystem units <1cm, 1cm>
\setplotarea  x from -6 to 6,  y from -3 to 3
\putrectangle corners at -3 -3 and -2 -2
\putrectangle corners at -2 -2 and 0 -1
\putrectangle corners at 0 -1 and 1 0
\putrectangle corners at 1 0 and 2 2
\putrectangle corners at 2 2 and 4 3
\put{$w_p$} at -1  -1.5
\put{$s_{p,q}$} at 0.5  -0.5
\put{$w_q$} at 1.5  1
\endpicture
}
\hfil
\caption{Part of the word $w$; a word $s_{p,q}$ is used to concatenate $w_p$ and $w_q$.}
\end{figure}

\begin{lemma}\label{extra}
One can find $u,u'\in W$ with
$\s(u)=\s(u')$, $o(u)=o(u')$, but $u\ne u'$.
\end{lemma}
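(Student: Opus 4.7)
The plan is to argue by contradiction: suppose that every $u \in W$ is uniquely determined by the pair $(\s(u), o(u))$. The goal is to manufacture, from this uniqueness and from (H2), a specific nonzero $p_0 \in \ZZ^r_+$ such that every $u \in W$ with $\s(u) \ge p_0$ is $p_0$-periodic, which contradicts (H3) applied to $p = p_0$.

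First I would fix any $a_0 \in A$ and invoke Lemma~\ref{ends} with $a = b = a_0$ and $n = (1, \dots, 1)$ to produce a loop $w_0 \in W$ at $a_0$ whose shape $p_0 := \s(w_0)$ is strictly positive in every coordinate. Because $t(w_0) = o(w_0)$, the iterated products $w_0^k \in W_{kp_0}$ exist by (H1) for every $k \ge 1$. I would then exploit the associativity of the product (Definition~\ref{word-prod}): the two decompositions $w_0^k = w_0 \cdot w_0^{k-1}$ and $w_0^k = w_0^{k-1} \cdot w_0$ give $w_0^k(i) = w_0^{k-1}(i-p_0)$ for $i \in [p_0, kp_0]$ and $w_0^k(j) = w_0^{k-1}(j)$ for $j \in [0,(k-1)p_0]$; combining these on $i \in [p_0, kp_0]$ yields $w_0^k(i) = w_0^k(i - p_0)$, so $w_0^k$ is $p_0$-periodic.

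Next I would bring in the uniqueness assumption. It forces $w_0^k$ to be the unique word of shape $kp_0$ starting at $a_0$, so the $w_0^k$ are mutually compatible, and because $p_0$ has strictly positive components the boxes $[0, kp_0]$ exhaust $\ZZ^r_+$. They therefore assemble into a $p_0$-periodic infinite word $w^\infty \in W_\infty$ based at $a_0$. By (H2), from $a_0$ there is a word to every $a' \in A$, and uniqueness forces any such word to coincide with a prefix of $w^\infty$, so every letter of $A$ appears as a value of $w^\infty$. For each $a' \in A$ I would pick a position $q$ with $w^\infty(q) = a'$ and translate to obtain a $p_0$-periodic infinite word starting at $a'$; uniqueness then identifies every finite $u \in W$ with $o(u) = a'$ as a prefix of this translated word. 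Hence every $u \in W$ with $\s(u) \ge p_0$ is $p_0$-periodic, contradicting (H3) for $p_0$.

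The hardest part will be paragraph three. The loop $w_0$ a priori produces only a single $p_0$-periodic orbit based at $a_0$, and three separate applications of the uniqueness assumption are needed to (i) assemble the iterates $w_0^k$ into the infinite word $w^\infty$, (ii) combine with (H2) to force every letter of $A$ to appear as a value of $w^\infty$, and (iii) identify every finite word (starting at an arbitrary letter of $A$) with a prefix of a shift of $w^\infty$. Only after this chain of identifications does the local periodicity of the single loop $w_0$ globalize into the uniform $p_0$-periodicity of $W$ needed to contradict (H3).
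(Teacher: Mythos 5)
Your proof is correct, but it follows a genuinely different route from the paper's. The paper also argues by contradiction, but works entirely in the direction $e_1$: if every word were determined by its shape and origin, then already among words of shape $e_1$ each letter $a$ would admit at most one $b$ with $M_1(b,a)=1$, while Lemma~\ref{ends} supplies at least one successor and one predecessor for each letter; hence the directed graph of $M_1$ is a disjoint union of closed cycles, and every word is $ge_1$-periodic for $g$ a suitable common period of the cycles, contradicting (H3) at $p=ge_1$. You instead build a closed loop $w_0$ at one letter with strictly positive shape $p_0$ (Lemma~\ref{ends} again), use associativity of the product to show the powers $w_0^k$ are $p_0$-periodic, and then invoke the uniqueness hypothesis together with (H2) to propagate that periodicity to every word, contradicting (H3) at $p=p_0$. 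The paper's version is shorter and purely one-dimensional, at the price of the small combinatorial analysis of the cycle structure of $M_1$ (where, incidentally, the period must be a common \emph{multiple} of the cycle lengths rather than the stated g.c.d.); yours trades that for the three-step globalization you flag in your last paragraph, which you carry out correctly. Both arguments are complete.
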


\begin{proof}
Assume the contrary. Considering words of shape $e_1$, we see that for fixed $a$, no more than one
$b\in A$ satisfies $M_1(b,a)=1$. By Lemma~\ref{ends}, at least one $b\in A$ satisfies $M_1(b,a)=1$
and at least one $c\in A$ satisfies $M_1(c,a)=1$. Consequently
the directed graph associated to $M_1$ must be a union of closed cycles.
If $g$ is the g.c.d of the cycle lengths, then every $w\in W$ is $ge_1$-periodic, contradicting (H3).
\end{proof}

\begin{lemma} \label{a3}
Let $p \in \ZZ^r$. Let $w_1,w_2 \in  W_l$.
There exist $l' \ge l$ and $w_1',w_2' \in  W_{l'}$
such that
$$w_1'\vert_{[0,l]}=w_1, \qquad w_2'\vert_{[0,l]}=w_2,$$
and
$$\tau_pw_1'\vert_{[0,l']\cap[p,p+l']} \ne w_2'\vert_{[0,l']\cap[p,p+l']}.$$
\end{lemma}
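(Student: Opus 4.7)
The plan is to exploit the branching guaranteed by Lemma~\ref{extra}: I will build two candidate extensions of $w_1$ which disagree at one carefully placed position, so that whatever value $w_2'$ assumes at the $p$-shifted location, at least one of the candidates must differ from it.

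First apply Lemma~\ref{extra} to obtain distinct $u,u'\in W_m$ (for some $m\ne 0$) with common origin $a=o(u)=o(u')$, and fix $j^*\in[0,m]$ with $u(j^*)\ne u'(j^*)$. Writing $p_+=p\vee 0$ and $p_-=(-p)\vee 0$, Lemma~\ref{ends} supplies $v\in W$ with $o(v)=t(w_1)$, $t(v)=a$ and $s:=\sigma(v)\ge p_-$. By Lemma~\ref{ends} and restriction, words of any prescribed shape with any prescribed origin exist, so we may choose $f_A\in W_{p_+}$ with $o(f_A)=t(u)$ and $f_B\in W_{p_+}$ with $o(f_B)=t(u')$. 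Put $l'=l+s+m+p_+$ and form
\[
w_A=w_1vuf_A,\qquad w_B=w_1vu'f_B,
\]
each in $W_{l'}$ by iterated products as in Definition~\ref{word-prod}. Both restrict to $w_1$ on $[0,l]$. Choose any extension $w_2'\in W_{l'}$ of $w_2$, produced analogously.

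Now set $k=l+s+p+j^*$. Short inequalities using $s\ge p_-$ and $0\le j^*\le m$ show $p_+\le k\le l'-p_-$, so $k\in[0,l']\cap[p,p+l']$. Since $k-p=l+s+j^*$ sits in the $u$-block of $w_A$ and the $u'$-block of $w_B$,
\[
\tau_p w_A(k)=u(j^*)\qquad\text{and}\qquad \tau_p w_B(k)=u'(j^*).
\]
Because $u(j^*)\ne u'(j^*)$, these two values cannot both equal $w_2'(k)$, and whichever of $w_A,w_B$ disagrees with $w_2'$ at $k$ serves as the required $w_1'$.

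The only delicate point is the $p_\pm$-bookkeeping needed to verify that $k$ really lands in the overlap and that the planted $u,u'$-blocks do not clash with the prescribed values of $w_1$ on $[0,l]$; both are arranged by taking $s\ge p_-$ in Lemma~\ref{ends}.
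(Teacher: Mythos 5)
Your proof is correct and takes essentially the same approach as the paper: both arguments use Lemma~\ref{extra} to obtain two words of equal shape and common origin, splice one of them into an extension of $w_1$ so that it lands at the $p$-shifted position, and select whichever branch disagrees with the chosen extension of $w_2$ there. The only difference is the order of choices (the paper inspects $w_2''$ first and then appends $u$ or $v$; you build both candidates and then compare), which is immaterial, and your $p_\pm$ bookkeeping checks out.
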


\begin{proof}
Find two different words $u,v$ with $\s(u)=\s(v)$ and $o(u)=o(v)$.
Choose $s$ so that $p+\s(w_1)+\s(s) \ge 0$, $o(s)=t(w_1)$ and $t(s)=o(u)=o(v)$.
Choose $w_2''$ so that $w_2''\vert_{[0,l]}=w_2$ and
$\s(w_2'') \ge p+\s(w_1)+\s(s)+\s(u)$.
Consider $w_2''\vert_{[p+\s(w_1)+\s(s),p+\s(w_1)+\s(s)+\s(u)]}$.
If this is equal to $u$, let $w_1''=w_1sv$,
otherwise, let
$w_1''=w_1su$. (This is illustrated in Figure~\ref{aperiodiclemma}.)
Finally, let $l'=\s(w_1'')\vee \s(w_2'')$
and extend $w_1''$ and $w_2''$ to words $w_1'$ and $w_2'$ in
$ W_{l'}$.
\end{proof}

\refstepcounter{picture}
\begin{figure}[htbp]
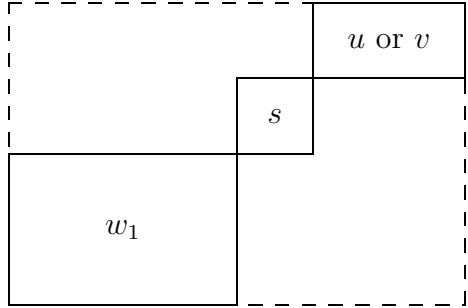
\label{aperiodiclemma}
\hfil
\beginpicture
\setcoordinatesystem units <1cm, 1cm>
\setplotarea  x from -6 to 6,  y from -2 to 2
\putrectangle corners at -3 -2 and 0 0
\putrectangle corners at 0 0 and 1 1
\putrectangle corners at 1 1 and 3 2
\setdashes
\putrectangle corners at -3 -2 and 3 2
\put{$w_1$} at -1.5  -1
\put{$s$} at 0.5  0.5
\put{$u$ or $v$} at 2  1.5
\endpicture
\hfil
\caption{The word  $w_1''$.}
\end{figure}

\begin{lemma} \label{bb}
Fix $m \in \ZZ^r$ with $m \ge 0$. For some $l \ge 0$
there exists a subset
$S =\{w_a;\ a \in A\}$ of $W_l$
satisfying the two properties below.
\begin{enumerate}
\item For each $a \in A$, $o(w_a)=a.$
\item Let $a,b \in A$. Let $p \ne 0$ be in $\ZZ^r$ with
$|p| \le m$.
Then $w_a\vert_{[0,l]\cap[p,p+l]} \ne \tau_pw_b\vert_{[0,l]\cap[p,p+l]}$.
\end{enumerate}
\end{lemma}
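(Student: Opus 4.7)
The plan is to build the family $\{w_a\}_{a \in A}$ inductively, processing one demanded inequality at a time. The key bookkeeping observation is \emph{monotonicity}: if two words $u, v \in W_l$ disagree at a point $x \in [0,l] \cap [p, p+l]$ in the relevant sense (meaning $u(x) \ne v(x-p)$, or $u(x)\ne u(x-p)$ when $u=v$), and if $u', v' \in W_{l'}$ are any extensions with $l' \ge l$ and $u'|_{[0,l]} = u$, $v'|_{[0,l]} = v$, then $x$ still lies in $[0,l'] \cap [p, p+l']$ and still witnesses the disagreement, while $o(u') = o(u)$. Once a constraint is in place, it therefore survives every subsequent extension. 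The second ingredient is the ability to extend any $w \in W_l$ to some $w' \in W_{l'}$ with $w'|_{[0,l]} = w$ whenever $l' \ge l$; this follows by iterating Lemma~\ref{1}, since Lemma~\ref{ends} produces letters continuing $t(w)$ in every required direction.

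First I would apply Lemma~\ref{a2} once for each $a \in A$ to obtain a word $w_a^{(0)} \in W_{l_a}$ with $o(w_a^{(0)}) = a$ which is non-$p$-periodic simultaneously for all nonzero $p$ with $|p| \le m$. I then extend each $w_a^{(0)}$ to a common shape $l_0$ using the procedure just described. By monotonicity this takes care of the diagonal case $a=b$ of condition~(2).

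Next I enumerate the finite set of off-diagonal triples $\{(a,b,p) : a, b \in A,\ a \ne b,\ p \ne 0,\ |p| \le m\}$ as $T_1, \dots, T_N$. At step $k$, with $T_k = (a,b,p)$, apply Lemma~\ref{a3} with $(w_1, w_2) = (w_b^{(k-1)}, w_a^{(k-1)})$ and translation $p$, obtaining extensions $w_b^{(k)}, w_a^{(k)}$ of some common shape $l_k \ge l_{k-1}$ with $\tau_p w_b^{(k)}|_{[0,l_k]\cap[p,p+l_k]} \ne w_a^{(k)}|_{[0,l_k]\cap[p,p+l_k]}$; this is exactly the inequality demanded by condition~(2) for $(a,b,p)$. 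Extend every other $w_c^{(k-1)}$ from $W_{l_{k-1}}$ to $W_{l_k}$ as before. Monotonicity guarantees that all constraints imposed at earlier steps persist, so after step $N$ the family $S = \{w_a^{(N)} : a \in A\} \subset W_{l_N}$ satisfies (1) and (2). The only substantive step is the monotonicity observation; I expect no real obstacle beyond keeping the indexing transparent.
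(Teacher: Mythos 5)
Your proof is correct and follows essentially the same route as the paper's: obtain each $w_a$ from Lemma~\ref{a2}, pad to a common shape, then sweep through the finitely many off-diagonal triples $(a,b,p)$ applying Lemma~\ref{a3} and re-padding, relying on the fact that an established disagreement at a point of $[0,l]\cap[p,p+l]$ persists under extension. The only difference is that you state this monotonicity observation explicitly, which the paper leaves implicit.
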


\begin{proof}
The elements of $S$ are chosen as follows. For each $a \in A$,
let $w_a \in W$ satisfy the conclusions of Lemma~\ref{a2}.
By extending the words $w_a$ as necessary we may suppose
that $w_a \in W_l$ for some $l \ge 0$.
If $a,b\in A$, $a\ne b$ and $p\in \ZZ^r$ with $|p|\le m$, we can apply
Lemma~\ref{a3} and extend $w_a$ and $w_b$ to $w_a'$ and $w_b'$ where $\tau_pw_a'$
and $w_b'$ do not agree on their common domain. Then one can extend all the other
$w_c$ to $w_c'$ of the same shape  as $w_a'$ and $w_b'$. Apply this procedure once for
each of the finitely many triples $(a,b,p)\in A\times A\times \ZZ^r$ with $|p|\le m$ and
$a\ne b$, and the proof is done.
\end{proof}

Fix $m \in \ZZ^r_+$.
Choose $l$ and $S$ satisfying the conditions of Lemma~\ref{bb}. Define
\begin{equation}\label{EQ}Q= \displaystyle\sum_
{\substack{
w \in \overline W;\s(w)=m+l \\
w\vert_{[m,m+l]} \in S
}} s_{w,w}.
\end{equation}

\begin{lemma}\label{bc}
If $l'\ge l$, then
\begin{equation}\label{XX}
Q= \displaystyle\sum_
{\substack{
w\in \overline W;\s(w)=m+l' \\
w\vert_{[m,m+l]} \in S
}} s_{w,w}.
\end{equation}
\end{lemma}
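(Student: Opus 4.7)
The plan is to apply Lemma~\ref{f2} to each summand in the defining expression \eqref{EQ} of $Q$, refining the sum from decorated words of shape $m+l$ up to decorated words of shape $m+l'$, and then to recognize the resulting double sum as the single sum appearing in \eqref{XX}.

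First, I would fix a decorated word $\ow\in\oW_{m+l}$ with $\ow\vert_{[m,m+l]}\in S$. Taking $u=v=\ow$ in Lemma~\ref{f2} and using the shape parameter $l'-l$, I would write
$$s_{\ow,\ow}=\sum_{\substack{v\in W;\,\s(v)=l'-l\\ o(v)=t(\ow)}}s_{\ow v,\ow v},$$
where each product $\ow v$ exists and is a decorated word of shape $m+l'$ by Corollary~\ref{4}. Since $m+l\le m+l'$ and the restriction of $\ow v$ to $[0,m+l]$ is exactly $\ow$, we have $(\ow v)\vert_{[m,m+l]}=\ow\vert_{[m,m+l]}\in S$, so every term produced this way qualifies to appear on the right-hand side of \eqref{XX}.

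Next I would set up the bijection
$$(\ow,v)\longleftrightarrow \ow v$$
between the pairs indexing the resulting double sum and decorated words $\ow'\in\oW_{m+l'}$ satisfying $\ow'\vert_{[m,m+l]}\in S$. Given such a $\ow'$, set $\ow=\ow'\vert_{[0,m+l]}\in\oW_{m+l}$ (carrying the decoration along) and $v=\ow'\vert_{[m+l,m+l']}\in W_{l'-l}$; then $\ow\vert_{[m,m+l]}=\ow'\vert_{[m,m+l]}\in S$ and $t(\ow)=o(v)$. The uniqueness clause of Corollary~\ref{4} guarantees that $\ow v=\ow'$ and that this inverse is well defined, so the correspondence is indeed bijective. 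Substituting the expansions of each $s_{\ow,\ow}$ into \eqref{EQ} and reindexing via this bijection produces precisely \eqref{XX}.

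The only non-routine point is the bookkeeping for the decoration: one must observe that the decoration attached to $\ow v$ is inherited from $\ow$, so restriction to $[0,m+l]$ recovers the decoration unchanged. This is built into the definition of restriction for decorated words in the introduction, so no serious obstacle arises; the argument is essentially a telescoping of Lemma~\ref{f2} together with the uniqueness in (H1).
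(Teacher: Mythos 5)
Your proposal is correct and follows essentially the same route as the paper: both expand each $s_{\ow,\ow}$ via Lemma~\ref{f2} and use the unique factorization from (H1) (Corollary~\ref{4}) to identify the resulting double sum with the sum over decorated words of shape $m+l'$ whose restriction to $[m,m+l]$ lies in $S$. The paper merely writes the same bijection starting from the right-hand side of \eqref{XX} rather than from $Q$.
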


\begin{proof} By (H1) and Lemma~\ref{f2},
\begin{equation*}
\displaystyle\sum_
{\substack{
w\in \overline W;\s(w)=m+l' \\
w\vert_{[m,m+l]} \in S
}} s_{w,w}
=
\displaystyle\sum_
{\substack{
w_1\in \overline W, w_2\in W \\
\s(w_1)=m+l, \s(w_2)=l'-l  \\
w_1\vert_{[m,m+l]} \in S, t(w_1)=o(w_2)
}} s_{w_1w_2,w_1w_2}
=
Q.
\end{equation*}
\end{proof}

\begin{lemma}\label{bd}
Suppose $\s(u),\s(v) \le m$ and $t(u)=t(v)=a$.
If $\s(u) \ne \s(v)$, then $Qs_{u,v}Q = 0$.
\end{lemma}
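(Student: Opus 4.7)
The plan is to compute $Qs_{u,v}Q$ explicitly and show every term vanishes by contradicting the non-periodicity of $S$ from Lemma~\ref{bb}(2). I would use $s_{u,v}=s_{u,u}\,s_{u,v}\,s_{v,v}$, set $n:=m+l$, and expand $s_{u,u}$ to shape $n$ via Lemma~\ref{f2}. Collapsing $Qs_{u,u}$ with Lemma~\ref{5} (both projections have shape $n$, so only the diagonal term survives) gives
$$Qs_{u,u}=\sum_{y\in Y_u}s_{uy,uy},\qquad Y_u=\{y\in W:\s(y)=n-\s(u),\,o(y)=t(u),\,(uy)\vert_{[m,n]}\in S\},$$
and similarly $s_{v,v}Q=\sum_{z\in Y_v}s_{vz,vz}$. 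A further expansion of $s_{u,v}$ combined with the same collapsing trick yields
$$Qs_{u,v}Q=\sum_{y\in Y_u,\,z\in Y_v}s_{uy,vy}\,s_{vz,vz}.$$

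Set $q:=\s(u)-\s(v)$, $q_+:=q\vee 0$, $q_-:=(-q)\vee 0$; by hypothesis $q\ne 0$ and $|q|=q_++q_-\le m$. The projections $s_{vy,vy}$ and $s_{vz,vz}$ have shapes $n-q$ and $n$, which are in general incomparable. Extending both to the common shape $n+q_-$ via Lemma~\ref{f2} and applying Lemma~\ref{5}, the product $s_{vy,vy}s_{vz,vz}$ is nonzero only if $vy$ and $vz$ admit a common extension, equivalently only if they agree on their full overlap $[0,n-q_+]$. Since both restrict to $v$ on $[0,\s(v)]$, this reduces to $y\vert_R=z\vert_R$ on $R:=[0,n-(\s(u)\vee\s(v))]$.

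Assuming such agreement, use the membership conditions to write $y\vert_{[m-\s(u),n-\s(u)]}=w_a$ and $z\vert_{[m-\s(v),n-\s(v)]}=w_b$ for some $a,b\in A$. Taking $j$ in the intersection of these two regions with $R$ and setting $k:=j-(m-\s(u))$, the equality $y(j)=z(j)$ transforms into $w_a(k)=w_b(k-q)=(\tau_qw_b)(k)$. A coordinate-wise computation shows $k$ ranges over exactly $[q_+,\,l-q_-]=[0,l]\cap[q,q+l]$, which is the full common domain of $w_a$ and $\tau_qw_b$. Thus $w_a=\tau_qw_b$ on this entire domain, contradicting Lemma~\ref{bb}(2) since $q\ne 0$ and $|q|\le m$. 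Every term of the expansion therefore vanishes, giving $Qs_{u,v}Q=0$.

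The main obstacle is the coordinate-wise bookkeeping when $q$ has mixed signs: the shapes $\s(vy)=n-q$ and $\s(vz)=n$ then sit on opposite sides of $n$ in different coordinates, forcing the extension to $n+q_-$; similarly, verifying that the $k$-range coincides precisely with the full overlap $[0,l]\cap[q,q+l]$ to which Lemma~\ref{bb}(2) applies requires a careful case analysis on the sign of each $q_i\gtreqless 0$.
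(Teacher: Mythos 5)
Your proof is correct, and it reaches the paper's contradiction by the same underlying mechanism: each multiplication by $Q$ forces a window of size $l$ into the set $S$, the two windows are offset by $p=\s(u)-\s(v)\ne 0$ with $|p|\le m$, and condition (2) of Lemma~\ref{bb} kills the term. The route differs in the decomposition. The paper expands $s_{u,v}=\sum_{v_1}s_{uv_1,vv_1}$ over a \emph{single} extension word $v_1$ of shape $m+l$, and uses Lemma~\ref{bc} twice --- once with $l'=l+\s(v)$ and once with $l'=l+\s(u)$ --- so that on each side $Q$ meets projections of exactly matching shape; the two membership conditions then concern two restrictions of the same word $v_1$, and their agreement (up to the shift $p$) on the overlap is automatic. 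You instead factor $s_{u,v}=s_{u,u}\,s_{u,v}\,s_{v,v}$, expand the two outer projections with \emph{independent} words $y$ and $z$, never invoke Lemma~\ref{bc}, and pay for it in the middle: you must extend $vy$ and $vz$, whose shapes $n-q$ and $n$ are incomparable when $q$ has mixed signs, to the common shape $n+q_-$ and argue that nonvanishing forces agreement on $[0,n-q_+]$ --- precisely the step that the paper's single-word expansion renders unnecessary. Your coordinate bookkeeping checks out: the $k$-range does come out to $[q_+,\,l-q_-]=[0,l]\cap[q,q+l]$, the full common domain. Both arguments tacitly use that this overlap is nonempty, i.e.\ $|q|\le l$; this holds because the words $w_a$ of Lemma~\ref{bb} can only fail $p$-periodicity when $[0,l]\cap[p,p+l]\ne\emptyset$, forcing $l\ge m\ge|q|$. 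The only blemish is notational: you reuse $a$ both for $t(u)=t(v)$ and as an index into $S$.
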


\begin{proof} Using Lemma~\ref{f2}, write
\begin{equation*}s_{u,v}= \displaystyle\sum_
{\substack{
v_1; \s(v_1)=m+l \\
o(v_1)=a
}} s_{uv_1,vv_1}.
\end{equation*}
Apply Lemma~\ref{bc} with $l'=l+\s(v)$ so that
for each word $w$ in the sum (\ref{XX}) $\s(w)=m+l+\s(v)=\s(vv_1)$.
By Lemma~\ref{5}, $s_{uv_1,vv_1}s_{w,w}=0$ unless $vv_1=w$.
Consequently $s_{uv_1,vv_1}Q=0$ unless $vv_1\vert_{[m,m+l]} \in S$,
which is to say $v_1\vert_{[m-\s(v),m-\s(v)+l]}\in S$.
Similarly, $Qs_{uv_1,vv_1}=0$ unless
$v_1\vert_{[m-\s(u),m-\s(u)+l]}\in S$.
But if $w_1=v_1\vert_{[m-\s(v),m-\s(v)+l]}\in S$ and
$w_2=v_1\vert_{[m-\s(u),m-\s(u)+l]}\in S$,
then $w_1$ and $w_2$ would fail condition (2) of Lemma~\ref{bb},
with $p=\s(u)-\s(v)$.
\end{proof}

\begin{remark}\label{bd+}
If $x=\sum_ic_is_{u_i,v_i}$ is a finite linear combination of the generators of $\cA$
with $\s(u_i),\s(v_i)\le m$ then
$QxQ= \displaystyle\sum_{\s(u_i)=\s(v_i)}c_iQs_{u_i,v_i}Q=Q\pi(x)Q$, by Lemma~\ref{bd}
and equation {\rm(}\ref{pi(x)}{\rm)}.
\end{remark}

\begin{lemma}\label{be}
The map $x \mapsto QxQ$ is an isometric *-algebra map from $\cF_m$
into $\cA$.
\end{lemma}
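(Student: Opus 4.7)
The plan is to establish three things in turn: that $Q$ is a self-adjoint projection, that the map is multiplicative on the generators of $\cF_m$, and that it is injective, from which isometry is automatic since $\cF_m$ is a finite direct sum of simple $C^*$-algebras. The first point is immediate: each summand $s_{w,w}$ in the definition~(\ref{EQ}) of $Q$ is a self-adjoint projection by Remark~\ref{equiv}, while distinct summands are orthogonal by Lemma~\ref{5} (the indexing words all share shape $m+l$ but are distinct). Hence $Q=Q^{*}=Q^{2}$, so $(QxQ)^{*}=Qx^{*}Q$ and the map is $*$-preserving.

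The heart of the matter is the explicit formula
$$Q s_{u,v} Q \;=\; \sum_{\substack{w_{1}\in S\\ o(w_{1})=t(u)}} s_{uw_{1},\,vw_{1}}, \qquad u,v\in\oW_{m},\ t(u)=t(v).$$
To prove it, first apply Lemma~\ref{f2} at shape $m+l$ to write $s_{u,v}=\sum_{w_{1}'} s_{uw_{1}',vw_{1}'}$, the sum over $w_{1}'\in W_{l}$ with $o(w_{1}')=t(v)$. In the product $s_{u,v}Q=\sum_{w_{1}',w} s_{uw_{1}',vw_{1}'}s_{w,w}$, factor $s_{uw_{1}',vw_{1}'}=s_{uw_{1}',vw_{1}'}s_{vw_{1}',vw_{1}'}$ and invoke Lemma~\ref{5} on the pair $(vw_{1}',w)$ of shape $m+l$: the product vanishes unless $vw_{1}'=w$, i.e.\ unless $w_{1}'\in S$ (since $vw_{1}'|_{[m,m+l]}=w_{1}'$). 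Left-multiplication by $Q$ then absorbs each surviving $s_{uw_{1},uw_{1}}$ by the same argument, giving the displayed formula. Multiplicativity on generators now follows by direct computation: $(Qs_{u,v}Q)(Qs_{u',v'}Q)$ expands to a double sum in $w_{1},w_{1}'$ whose terms vanish by Lemma~\ref{5} unless $vw_{1}=u'w_{1}'$, i.e.\ unless $v=u'$ and $w_{1}=w_{1}'$. If $v=u'$ the surviving sum reduces to $\sum_{w_{1}\in S,\,o(w_{1})=t(v')} s_{uw_{1},v'w_{1}}=Q s_{u,v'}Q=Q(s_{u,v}s_{u',v'})Q$, while if $v\ne u'$ then $s_{u,v}s_{u',v'}=0$ by Lemma~\ref{5} applied to $s_{v,v}s_{u',u'}$, and the double sum is zero as well.

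For isometry, Lemma~\ref{fm} identifies $\cF_{m}$ as the finite direct sum $\bigoplus_{a\in A}\cK(l^{2}(X_{a}))$ of simple $C^*$-algebras, so the $*$-homomorphism $x\mapsto QxQ$ is automatically isometric once it is injective, which reduces to checking it is nonzero on each simple summand. For this it suffices that $Qs_{u,u}Q\ne 0$ for every $u\in\oW_{m}$. The formula above specialises to $Qs_{u,u}Q=\sum_{w_{1}\in S,\,o(w_{1})=t(u)} s_{uw_{1},uw_{1}}$; this sum is nonempty since $w_{t(u)}\in S$ has origin $t(u)$, each summand is a nonzero projection by Lemma~\ref{nonzero+}, and distinct summands are mutually orthogonal by Lemma~\ref{5}, so the total is a nonzero projection. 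The main obstacle is the bookkeeping in establishing the formula for $QsQ$: one must lift via Lemma~\ref{f2} from shape $m$ to shape $m+l$ and then systematically kill cross-terms via Lemma~\ref{5} on both sides of $s_{u,v}$; once that formula is in hand, the three assertions (multiplicativity, $*$-preservation, injectivity) are essentially mechanical.
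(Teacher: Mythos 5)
Your proof is correct and follows essentially the same route as the paper: both hinge on the identity $Qs_{u,v}Q=s_{uw_a,vw_a}$ for $u,v\in\oW_m$ with $t(u)=t(v)=a$ (your sum over $w_1\in S$ with $o(w_1)=t(u)$ has exactly the one term $w_a$, by Lemma~\ref{bb}(1)), and then recognize the map as an injective, hence isometric, *-homomorphism out of the direct sum of elementary $C^*$-algebras from Lemma~\ref{fm}. The paper leaves the verification of that identity and of injectivity to the reader; you have simply supplied those details, correctly.
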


\begin{proof}
If $\s(u)=\s(v)=m$ and $t(u)=t(v)=a$, then, with the notation of Lemma~\ref{bb},
we have $Qs_{u,v}Q= s_{uw_a,vw_a}$.
With the notation of Lemma~\ref{fm}, consider the map
$$\bigoplus_{a\in A} \cK(l^2(\{w \in \overline W; \s(w)=m, t(w)=a\})) \to \cA$$
given by
$$E_{\delta_u,\delta_v}^a \mapsto s_{uw_a,vw_a},$$
when $t(u)=t(v)=a$. This is easily checked to be an injective *-algebra map,
hence an isometry. The map in the statement of the Lemma is the composition of this isometry
with that of Lemma~\ref{fm}.
\end{proof}

\begin{theorem}\label{main1} The $C^*$-algebra $\cA$ is simple.
\end{theorem}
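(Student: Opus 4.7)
The plan is to prove simplicity by showing that any proper ideal $I\subset\cA$ must be zero. The key tool is the pinching projection $Q$ from equation~(\ref{EQ}): by Remark~\ref{bd+} it collapses arbitrary finite sums onto their ``shape-balanced'' part, and by Lemma~\ref{be} the map $z\mapsto QzQ$ is isometric on $\cF_m$. Combining this with Proposition~\ref{F} applied to the quotient map will produce the desired contradiction.

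First I would suppose $I$ contains some nonzero element; replacing it by $z^*z$ and normalizing, I obtain a positive $x\in I$ with $\|x\|=1$. Since $\pi$ is faithful (Lemma~\ref{b}), $\pi(x)\ne 0$, and I set $\eta=\|\pi(x)\|/3>0$. By Corollary~\ref{8} I may choose a finite linear combination $y=\sum_i c_i s_{u_i,v_i}$ with $\|x-y\|<\eta$; I then fix $m\in\ZZ_+^r$ dominating all $\s(u_i)$ and $\s(v_i)$, and construct $Q$ via equation~(\ref{EQ}) using the set $S$ from Lemma~\ref{bb}.

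The core computation is to verify $\pi(y)\in\cF_m$. From equation~(\ref{pi(x)}) only the terms with $\s(u_i)=\s(v_i)$ survive in $\pi(y)$; each such term has shape $\le m$, and Lemma~\ref{f2} rewrites it as a sum over words of shape exactly $m$, placing $\pi(y)$ inside $\cF_m$. Remark~\ref{bd+} then gives $QyQ=Q\pi(y)Q$, and Lemma~\ref{be} yields
$$\|Q\pi(y)Q\|=\|\pi(y)\|\ge\|\pi(x)\|-\|\pi(x-y)\|\ge 3\eta-\eta=2\eta.$$

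Finally I would apply the quotient map $q\colon\cA\to\cA/I$. Since $I$ is proper, $q$ is nonzero, so Proposition~\ref{F} makes $q|_\cF$ an isomorphism, hence isometric. Both $Q$ and $\pi(y)$ lie in $\cF$, so $Q\pi(y)Q\in\cF$ and $\|q(Q\pi(y)Q)\|=\|Q\pi(y)Q\|\ge 2\eta$; on the other hand, $QxQ\in I$ gives $q(QxQ)=0$, and
$$\|q(Q\pi(y)Q)\|\le\|Q\pi(y)Q-QxQ\|=\|QyQ-QxQ\|\le\|y-x\|<\eta,$$
a contradiction, forcing $I=0$. The main obstacle will be the bookkeeping ensuring that after $\pi$ annihilates the shape-mismatched terms, the survivors are uniformized to shape exactly $m$ via Lemma~\ref{f2} and hence lie in $\cF_m$; this is what makes the Lemma~\ref{be} isometry available and keeps the pinched element inside $\cF$, so that Proposition~\ref{F} can bite.
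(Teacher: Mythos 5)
Your argument is correct and uses exactly the paper's machinery in the same way: the projection $Q$ of equation~(\ref{EQ}), the pinching identity $QyQ=Q\pi(y)Q$ of Remark~\ref{bd+}, the isometry of Lemma~\ref{be}, the injectivity of nonzero homomorphisms on $\cF$ from Proposition~\ref{F}, and the faithfulness of $\pi$. The paper packages this as the inequality $\|\phi(x)\|\ge\|\pi(x)\|$ for an arbitrary nonzero $*$-homomorphism $\phi$, extended by continuity, whereas you run the equivalent quotient-map version with an explicit $\eta$-approximation; this is a presentational difference only.
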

\begin{proof}
Let $\phi$ be a nonzero *-homomorphism from $\cA$ to some $C^*$-algebra.
It is enough to show that $\phi$  is an isometry.
Let $x=\sum_ic_is_{u_i,v_i}$ be a finite linear combination of the generators of $\cA$.
Choose $m \in \ZZ^r_+$ so that $\s(u_i),\s(v_i) \le m$ for all $i$.
Choose $l$ and $S$ as in Lemma~\ref{bb} and let $Q$ be as in equation (\ref{EQ}).
By Remark~\ref{bd+}, $QxQ=Q\pi(x)Q$.
Observe that  $\pi(x) \in \cF$ and so by Lemma~\ref{be},  $\|Q\pi(x)Q\| = \|\pi(x)\|$.
Moreover $Q\pi(x)Q \in \cF_{m+l}$, so by Proposition~\ref{F},
$\|\phi(Q\pi(x)Q)\| =\|Q\pi(x)Q\|$. Thus
$$\|\phi(x)\| \ge \|\phi(Q)\phi(x)\phi(Q)\| = \|\phi(QxQ)\| =
\|\phi(Q\pi(x)Q)\|=\|Q\pi(x)Q\|=\|\pi(x)\|.$$
The inequality extends by continuity to all $x \in \cA$.
It follows that $\phi$ is faithful since if $\phi(y)=0$ then
$0 = \|\phi(y^*y)\| \ge \|\pi(y^*y)\|$. Therefore $y=0$,
by Lemma~\ref{b}.
\end{proof}

\begin{corollary}\label{main1+}
Let ${\cH}$ be a Hilbert space and for each $u,v \in \overline W$ with
$t(u) = t(v)$ let $S_{u,v} \in \cB(\cH)$ be a nonzero partial isometry.
If the $S_{u,v}$ satisfy the relations (\ref{rel1}) and $\hat \cA$
denotes the $C^*$-algebra which they generate,
then there is a unique *-isomorphism
$\phi$ from $\cA$ onto $\hat \cA$  such that
$\phi(s_{u,v})=S_{u,v}$.
\end{corollary}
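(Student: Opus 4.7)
The plan is to derive this corollary almost immediately from Lemma~\ref{6} (the universal property giving a *-homomorphism) together with Theorem~\ref{main1} (simplicity of $\cA$).

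First I would invoke Lemma~\ref{6} directly: since the family $\{S_{u,v}\}$ satisfies the relations (\ref{rel1}) in $\cB(\cH)$, there is a unique *-homomorphism $\phi : \cA \to \cB(\cH)$ with $\phi(s_{u,v}) = S_{u,v}$. The image of $\phi$ is the $C^*$-subalgebra generated by the $S_{u,v}$, which is precisely $\hat\cA$; so $\phi$ restricts to a surjective *-homomorphism $\cA \to \hat\cA$, and it is clearly the unique such map extending $s_{u,v}\mapsto S_{u,v}$.

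Next I would check that $\phi$ is nonzero. This is where the hypothesis that each $S_{u,v}$ is a nonzero partial isometry is used: picking any single $u \in \oW$ with $t(u)=a$, the element $S_{u,u}$ is nonzero (it is $S_{u,u}=S_{u,u}^*S_{u,u}\ne 0$ as the initial projection of the nonzero partial isometry $S_{u,u}$, which equals itself), so $\phi$ is not the zero homomorphism. (In fact the nonvanishing of a single $S_{u,v}$ already suffices.)

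Finally I would apply Theorem~\ref{main1}: $\cA$ is simple, so the kernel of the nonzero *-homomorphism $\phi$ must be trivial, whence $\phi$ is injective. Combined with the surjectivity from the first step, $\phi : \cA \to \hat\cA$ is a *-isomorphism. There is no real obstacle here; the work has been done in Theorem~\ref{main1} and Lemma~\ref{6}. The only subtlety worth naming explicitly is the one alluded to above — verifying that the hypothesis ``$S_{u,v}$ is a nonzero partial isometry'' is enough to make $\phi$ nonzero — but this is immediate since $\phi(s_{u,v}) = S_{u,v}\ne 0$ for at least one (indeed every) pair $u,v$.
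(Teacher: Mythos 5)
Your proposal is correct and follows exactly the paper's route: the paper's proof is the one-line observation that the result follows from Lemma~\ref{6} and Theorem~\ref{main1}, which is precisely what you carry out in detail. The extra care you take in checking that $\phi$ is nonzero and surjective onto $\hat\cA$ is fine and matches what the paper leaves implicit.
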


\begin{proof} This follows immediately from Lemma~\ref{6} and Theorem~\ref{main1}.
\end{proof}

\begin{proposition}\label{purely_infinite}
The $C^*$-algebra $\cA$ is purely infinite.
\end{proposition}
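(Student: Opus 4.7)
The plan is to establish two claims and combine them with Theorem~\ref{main1} to conclude pure infiniteness.

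\emph{Claim (a): every projection $s_{u,u}$ with $u \in \oW$ is infinite in $\cA$.} Let $a = t(u)$. Combining Lemma~\ref{extra} with (H2) and Lemma~\ref{ends}, one finds a shape $M \in \ZZ_+^r$ and words $w, w' \in W$ starting at $a$ with $\s(w) = \s(w') = M$, $w \neq w'$, and $t(w) = a$. The combinatorial heart of this step is that the set of shapes at $a$ admitting at least two distinct words is upward-closed in the set of achievable shapes (distinct words extend to distinct words), while loop shapes at $a$ are cofinal by irreducibility, so the two sets have nonempty intersection. By Lemma~\ref{f2}, $s_{uw,uw}$ and $s_{uw',uw'}$ appear as distinct orthogonal subprojections inside $s_{u,u}$. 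Since $t(uw) = t(w) = t(u)$, the partial isometry $s_{u,uw}$ witnesses $s_{u,u} \sim s_{uw,uw}$, which is a proper subprojection; hence $s_{u,u}$ is infinite.

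\emph{Claim (b): for every nonzero positive $x \in \cA$ and $\epsilon > 0$, there exist $b \in \cA$ and $u_0 \in \oW$ with $\|b^* x b - s_{u_0,u_0}\| < \epsilon$.} Approximate $x$ within some $\epsilon'$ by a finite combination $x' = \sum_i c_i s_{u_i,v_i}$ of generators with $\s(u_i), \s(v_i) \leq m$ (Corollary~\ref{8}). By Lemma~\ref{b}, $\pi$ is faithful, so $\pi(x) \neq 0$; for $\epsilon'$ small, $\pi(x') \neq 0$ as well. Construct $Q$ as in~(\ref{EQ}) for this $m$; by Remark~\ref{bd+} and Lemma~\ref{be}, $z \mapsto QzQ$ is an isometric *-homomorphism $\beta: \cF_m \to \cA$ satisfying $\beta(s_{u,v}) = s_{uw_a,vw_a}$ whenever $\s(u) = \s(v) = m$ and $t(u) = t(v) = a$, and $Qx'Q = Q\pi(x')Q = \beta(\pi(x'))$.

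By Lemma~\ref{fm}, $\pi(x')$ is a nonzero positive element of $\cF_m \cong \bigoplus_{a \in A} \cK(l^2(\{w \in \oW: \s(w) = m, t(w) = a\}))$, so in some nonzero summand it is a compact positive operator of spectral radius $\lambda > 0$. Let $p \in \cF_m$ be a rank-one projection with $p\pi(x')p = \lambda p$, pick $u_0 \in \oW$ with $\s(u_0) = m$ and $t(u_0) = a$ (exists since that summand is nonzero), and let $v \in \cF_m$ be a partial isometry with $vv^* = s_{u_0,u_0}$ and $v^*v = p$. Then $v\pi(x')v^* = \lambda s_{u_0,u_0}$, so applying $\beta$ yields $(QvQ)(Qx'Q)(Qv^*Q) = \lambda s_{u_0 w_a, u_0 w_a}$. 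Setting $b = \lambda^{-1/2} Q v^* Q$ gives $b^* x' b = s_{u_0 w_a, u_0 w_a}$ and $\|b^*(x-x')b\| \leq \lambda^{-1}\epsilon'$, which falls below $\epsilon$ once $\epsilon'$ is small enough (noting $\lambda$ stays bounded below as $x' \to x$, since $\pi$ is continuous and $\pi(x)$ has positive norm in the chosen summand).

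Given (a) and (b), a standard functional-calculus/perturbation argument produces a projection in $\overline{x\cA x}$ Murray--von Neumann equivalent to $s_{u_0 w_a, u_0 w_a}$, which is infinite by~(a); hence every nonzero hereditary $C^*$-subalgebra of $\cA$ contains an infinite projection, and $\cA$ is purely infinite. The main obstacle is the combinatorial step in~(a)---simultaneously realizing a loop at $a$ and a second distinct word of the same shape---which requires the interplay of (H2) and (H3) via Lemma~\ref{extra}; the rest reuses the $Q$-technology from Section~\ref{sectionsimplicity} together with elementary spectral theory in the compact operator summand.
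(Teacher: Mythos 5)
Your proposal is correct and follows essentially the same route as the paper: first show the generator projections $s_{u,u}$ are infinite, then use the compression $x\mapsto QxQ$ from the simplicity argument together with a rank-one cut-down in $\cF_l$ and a standard perturbation to place an infinite projection in every hereditary subalgebra $\overline{x\cA x}$. Your step (a) is in fact slightly more careful than the paper's, which deduces infiniteness of $s_{u,u}$ from Lemmas~\ref{f1} and~\ref{f2} without explicitly verifying that the equivalent subprojection $s_{uw,uw}$ is \emph{proper}; your appeal to Lemma~\ref{extra} (plus Lemma~\ref{ends}) supplies exactly that properness.
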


\begin{proof} Note first that by Lemma~\ref{f2}, $s_{uw,uw}$ is a
subprojection of $s_{u,u}$. By Lemma~\ref{f1}, this implies that $s_{u,u}$
is an infinite projection for any $u$. Any rank one projection in any $\cF_l$
is equivalent to $s_{u,u}$ for some $u\in W_l$, and hence is infinite.

We must show that for every nonzero $h \in \cA_+$, the $C^*$-algebra
$\overline{h\cA h}$ contains an infinite projection.
Since $\pi$ is faithful, we may assume $\|\pi(h^2)\|=1$.
Let $0 < \epsilon <1$.
Approximate $h$ by self-adjoint finite linear combinations of generators.
The square of this approximation gives an element  $y=\sum_ic_is_{u_i,v_i}$
with $y\ge 0$ and $\| y-h^2\|\le\epsilon$. Fix $m$ so that $\s(u_i),\s(v_i)\le m$ for all $i$ and
then construct $Q$ by Lemma~\ref {bb} and equation (\ref {EQ}).
We have $QyQ=Q\pi(y)Q\in \cF_l$ for some $l$, $QyQ\ge 0$ and
$\|QyQ\|= \|Q\pi(y)Q\|=\|\pi(y)\|\ge\|\pi(h^2)\|-\epsilon=1-\epsilon$.
Since $\cF_l$ is a direct sum of (finite or infinite dimensional) algebras of compact operators,
there exists a rank one positive operator $R_1\in \cF_l$ with $\|R_1\|\le(1-\epsilon)^{-1/2}$ so that
$R_1QyQR_1=P$ is a rank one projection in $\cF_l$. Hence $P$ is an infinite projection.

It follows that
$\|R_1Qh^2QR_1-P\|\le \|R_1^2\|\|Q\|^2\|y-h^2\|\le \epsilon /(1-\epsilon)$.
By functional calculus, one obtains $R_2\in \cA_+$ so that $R_2R_1Qh^2QR_1R_2$
is a projection and $\|R_2R_1Qh^2QR_1R_2-P\|\le 2\epsilon /(1-\epsilon)$.
For small $\epsilon$ one can then find an element $R_3$ in $\cA$ so that
$R_3R_2R_1Qh^2QR_1R_2R_3^*=P$.

Let $R=R_3R_2R_1Q$, so that $Rh^2R^*=P$. Consequently, $Rh$ is a partial isometry, whose initial projection
$hR^*Rh$ is a projection in $h\cA h$ and whose final projection is $P$. Moreover, if $V$ is a partial isometry in $\cA$
such that $V^*V=P$ and $VV^* < P$, then $(hR^*)V(Rh)$ is a partial isometry
in $h\cA h$ with initial projection $hR^*Rh$ and final projection strictly less than
$hR^*Rh$.
\end{proof}

We can now explain one of the reasons for introducing the set $D$ of decorations.
Recall that $D$ is a countable or finite set.
Denote by $\cA_D$ the algebra $\cA$ corresponding to a given $D$.
One special case of interest is when $D=A$  and $\delta$ is the
identity map. Then the algebra $\cA_A$ is a direct generalization of a Cuntz-Krieger
algebra \cite{ck}. There is an obvious notion of equivalence for decorations: two decorations
$\delta_1 : D_1 \to A$ and $\delta_2 : D_2 \to A$ are equivalent if there is a bijection
$\eta: D_1 \to D_2$ such that $\delta_1 = \delta_2 \eta$. Equivalent decorations give rise to
isomorphic algebras. Given any set $D$ of decorations we can obtain another
set of decorations $D \times \NN$, with the decorating map $\delta':D\times \NN
\to A$ defined by $\delta'((d,i)) = \delta(d)$.

\begin{lemma}\label{DN} There exists an isomorphism of $C^*$-algebras $\cA_{D \times \NN} \cong \cA_D\otimes\cK$.
\end{lemma}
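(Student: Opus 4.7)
The plan is to construct an explicit isomorphism on generators using matrix units and appeal to simplicity for injectivity. First, I would unwind the definitions: for the decoration $\delta'\colon D\times\NN\to A$, an element of $\oW_{D\times\NN,m}$ is a pair $((d,i),w)$ with $o(w)=\delta(d)$, so there is a canonical bijection
\begin{equation*}
\oW_{D\times\NN,m} \longleftrightarrow \oW_{D,m} \times \NN, \qquad ((d,i),w) \longleftrightarrow ((d,w),i),
\end{equation*}
and this bijection preserves $t$ (which depends only on the underlying word), intertwines the product of Corollary~\ref{4} with undecorated words in the sense $(\ou,i)\cdot v = (\ou v,i)$, and carries $\oW_{D\times\NN,0}$ onto $D\times\NN$. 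The hypotheses (H0)--(H3) depend only on $A$ and the $M_i$, so the algebra $\cA_{D\times\NN}$ is defined and simple, just as $\cA_D$ is.

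Next, writing $\{e_{i,j}\}_{i,j\in\NN}$ for the standard matrix units in $\cK=\cK(l^2(\NN))$, I would set
\begin{equation*}
T_{(\ou,i),(\ov,j)} := s_{\ou,\ov}\otimes e_{i,j} \in \cA_D\otimes\cK
\end{equation*}
whenever $t(\ou)=t(\ov)$, and check that these elements satisfy the relations (\ref{rel1}). Relations (\ref{rel1a}) and (\ref{rel1b}) are immediate from $e_{i,j}^*=e_{j,i}$ and $e_{i,j}e_{j,k}=e_{i,k}$. For (\ref{rel1c}) in direction $l$, the sum $\sum_w T_{(\ou,i)w,(\ov,j)w}$ (over $w\in W$ with $\s(w)=e_l$ and $o(w)=t(\ou)$) becomes $\bigl(\sum_w s_{\ou w,\ov w}\bigr)\otimes e_{i,j} = s_{\ou,\ov}\otimes e_{i,j}$ by applying (\ref{rel1c}) inside $\cA_D$. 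For (\ref{rel1d}), if $(\ou,i)\ne(\ov,j)$ in $D\times\NN$, then either $\ou\ne\ov$ (so $s_{\ou,\ou}s_{\ov,\ov}=0$ in $\cA_D$) or $i\ne j$ (so $e_{i,i}e_{j,j}=0$); either way, the product of the two projections vanishes. By the universal property of $\cA_{D\times\NN}$ (established via Lemma~\ref{6} or equivalently the construction of the enveloping $C^*$-algebra from $\cA_0$), there is a unique $*$-homomorphism $\phi\colon\cA_{D\times\NN}\to\cA_D\otimes\cK$ sending $s_{(\ou,i),(\ov,j)}$ to $T_{(\ou,i),(\ov,j)}$.

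Finally, injectivity is automatic: $\phi$ is nonzero because $\phi(s_{(\ou,0),(\ou,0)})=s_{\ou,\ou}\otimes e_{0,0}\ne 0$ by Lemma~\ref{nonzero+}, and $\cA_{D\times\NN}$ is simple by Theorem~\ref{main1}. For surjectivity, the image of $\phi$ contains every elementary tensor $s_{\ou,\ov}\otimes e_{i,j}$; by Corollary~\ref{8} the span of the $s_{\ou,\ov}$ is dense in $\cA_D$, and the span of the $e_{i,j}$ is dense in $\cK$, so finite linear combinations of the $T_{(\ou,i),(\ov,j)}$ are norm-dense in $\cA_D\otimes\cK$ (there is no ambiguity in the tensor product since $\cK$ is nuclear). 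Hence $\phi$ is the desired isomorphism. There is no serious obstacle; the only care needed is the bookkeeping for the identification $\oW_{D\times\NN}\leftrightarrow \oW_D\times\NN$ so that relation (\ref{rel1c}) translates correctly between the two algebras.
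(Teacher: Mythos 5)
Your proposal is correct and follows essentially the same route as the paper: the paper defines the map on generators by $s_{((d,i),u),((d',j),v)} \mapsto s_{(d,u),(d',v)}\otimes E_{i,j}$ and invokes Corollary~\ref{main1+}, which packages exactly the steps you spell out (relations, universal property, injectivity from simplicity). Your added density argument for surjectivity onto all of $\cA_D\otimes\cK$ is a detail the paper leaves implicit, and is a worthwhile check.
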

\begin{proof}
If $u,v \in W$, the isomorphism is given by
$s_{((d,i),u),((d',j),v)} \mapsto s_{(d,u),(d',v)}\otimes E_{i,j}$,
where the $E_{i,j}$ are matrix units for $\cB(l^2(\NN))$. The fact that this
is an isomorphism follows from Corollary~\ref{main1+}.
\end{proof}

This procedure is useful, because it provides a routine method of passing from
$\cA$ to $\cA \otimes\cK$, a technique that is necessary to obtain the results of
\cite{ck}.

\begin{lemma}\label{27}
Let $l: D \to \ZZ^r_+$ be any map. Define $D'=\{(d,w)\in \oW;\ \s(w)=l(d)\}$
and define $\dd: D' \to A$ by $\dd(\ow)=t(\ow)$. Then $\cA_{D'}\cong\cA_D$.
\end{lemma}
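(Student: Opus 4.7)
The plan is to construct the isomorphism by mapping generators of $\cA_{D'}$ explicitly to products of pieces in $\cA_D$. Every decorated word $((d,w),u) \in \oW^{D'}$ (with $d\in D$, $w\in W_{l(d)}$, $o(w)=\delta(d)$, $u\in W$, $o(u)=\dd((d,w))=t(w)$) gives, via Corollary~\ref{4}, a well-defined decorated word $(d,wu) \in \oW_{l(d)+\s(u)}^{D}$. I would define
$$\phi\bigl(s_{((d,w),u),((d',w'),v)}\bigr) \;=\; s_{(d,wu),(d',w'v)} \qquad\text{in } \cA_D.$$
The matching condition $t(wu)=t(u)=t(v)=t(w'v)$ is automatic, so each right-hand side is an honest generator of $\cA_D$, and by Lemma~\ref{nonzero+} together with Remark~\ref{equiv} it is a nonzero partial isometry.

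The key verification is that these partial isometries satisfy the defining relations~(\ref{rel1}) of $\cA_{D'}$. Relations (\ref{rel1a}) and (\ref{rel1b}) are transparent. Relation (\ref{rel1c}) is immediate once one observes that extending $(ux)$ or $(vx)$ by a letter $x\in W_{e_j}$ with $o(x)=t(u)$ is the same as extending $wu$ or $w'v$ by $x$; the $\cA_D$-identity~(\ref{rel1c}) applied to $s_{(d,wu),(d',w'v)}$ yields precisely what is needed. Relation (\ref{rel1d}) is the delicate point. For distinct $(d,w),(d',w')\in D'$, I must show $s_{(d,w),(d,w)}\,s_{(d',w'),(d',w')}=0$ in $\cA_D$. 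If $d=d'$ then $l(d)=l(d')$, and $w\ne w'$ makes $(d,w),(d',w')$ distinct elements of $\oW_{l(d)}$, so Lemma~\ref{5} applies. If $d\ne d'$, I would use Lemma~\ref{f2} to expand each projection up to the common shape $m=l(d)\vee l(d')$; every cross-term in the resulting double sum has the form $s_{(d,wy),(d,wy)}\,s_{(d',w'y'),(d',w'y')}$ with both entries in $\oW_m$ and distinct first coordinates $d\ne d'$, so Lemma~\ref{5} annihilates each term.

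Having verified the relations, Corollary~\ref{main1+} delivers a $*$-isomorphism $\phi$ from $\cA_{D'}$ onto the $C^*$-subalgebra $\hat\cA\subseteq\cA_D$ generated by the images. It remains to show $\hat\cA=\cA_D$. Given a generator $s_{(d,z),(d',z')}$ of $\cA_D$ with $t(z)=t(z')=a$, pick $k\in\ZZ_+^r$ so large that $\s(z)+k\ge l(d)$ and $\s(z')+k\ge l(d')$, and invoke Lemma~\ref{f2}:
$$s_{(d,z),(d',z')} \;=\; \sum_{\substack{x\in W_k\\ o(x)=a}} s_{(d,zx),(d',z'x)}.$$
For each $x$, split $zx = w\cdot u$ with $w=(zx)|_{[0,l(d)]}$, $u=(zx)|_{[l(d),\s(z)+k]}$, and likewise $z'x=w'\cdot v$; then $s_{(d,zx),(d',z'x)}=\phi\bigl(s_{((d,w),u),((d',w'),v)}\bigr)\in\hat\cA$. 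Thus every generator of $\cA_D$ lies in $\hat\cA$, and since $\hat\cA$ is closed we conclude $\hat\cA=\cA_D$, finishing the proof.

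The main obstacle is the orthogonality check for distinct decorations with different $l$-values, which requires one first to lift both projections to a common shape before applying Lemma~\ref{5}. The rest of the argument is bookkeeping: the explicit surjectivity computation and the automatic injectivity inherited from the simplicity of $\cA_{D'}$ via Corollary~\ref{main1+}.
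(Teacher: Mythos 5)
Your proposal is correct and follows essentially the same route as the paper: the identical generator assignment $s_{((d,w),u),((d',w'),v)}\mapsto s_{(d,wu),(d',w'v)}$, verification of the relations (\ref{rel1}), injectivity via Corollary~\ref{main1+}, and surjectivity by expanding each generator of $\cA_D$ to sufficiently large shape using (\ref{rel1c})/Lemma~\ref{f2}. The only difference is that you spell out the orthogonality check (\ref{rel1d}) for decorations with different $l$-values by lifting to a common shape, a detail the paper leaves implicit.
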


\begin{proof}
Define $\phi: \cA_{D'}\to\cA_D$ by $\phi(s_{(\ow_1,u_1),(\ow_2,u_2)})=s_{\ow_1u_1,\ow_2u_2}$,
for $\ow_1,\ow_2\in D'$, $u_1,u_2 \in W$, $o(u_i)=\dd(\ow_i)=t(\ow_i)$, and $t(u_1)=t(u_2)$.
Relations (\ref{rel1}) (for $\cA_{D'}$) are satisfied for $\phi(s_{(\ow_1,u_1),(\ow_2,u_2)})$.
By Corollary~\ref{main1+} the homomorphism $\phi$ exists and is injective.
 Relation (\ref{rel1c}) for $\cA_D$
shows that each generator of $\cA_D$ is in the image of $\phi$.
Hence $\phi$ is an isomorphism.
\end{proof}

\begin{corollary}\label{27+}
For any $(D,\delta)$, $\cA_D$ is isomorphic to $\cA_{D'}$ for some $(D',\dd)$
with $\dd:D'\to A$ surjective.
\end{corollary}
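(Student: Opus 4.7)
The plan is to apply Lemma~\ref{27} with a suitably chosen $l:D\to\ZZ^r_+$, iteratively if necessary. Recall that Lemma~\ref{27} produces a decoration $(D',\dd)$ with $\cA_{D'}\cong\cA_D$, where $D'=\{(d,w)\in\oW : \s(w)=l(d)\}$ and $\dd(d,w)=t(w)$. Thus surjectivity of $\dd$ is equivalent to the requirement that for every $a\in A$ there exist $d\in D$ and $w\in W_{l(d)}$ with $o(w)=\delta(d)$ and $t(w)=a$.

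If $|D|\ge|A|$, I will pick distinct elements $d_a\in D$ indexed by $a\in A$. By (H2) and Lemma~\ref{ends}, for each $a$ there is a word $w_a$ with $o(w_a)=\delta(d_a)$ and $t(w_a)=a$. I define $l(d_a)=\s(w_a)$ and extend $l$ arbitrarily on the rest of $D$; the pairs $(d_a,w_a)$ lie in the resulting $D'$ and witness surjectivity of $\dd$, so one application of Lemma~\ref{27} suffices.

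When $|D|<|A|$, I will first apply Lemma~\ref{27} with an auxiliary $l_1$ to enlarge $D$ to a $D_1$ with $|D_1|\ge|A|$, and then apply the argument above to $(D_1,\dd_1)$. Composing the isomorphisms from the two invocations of Lemma~\ref{27} yields $\cA_D\cong\cA_{D_1}\cong\cA_{D_2}$, with $\dd_2:D_2\to A$ surjective. The inflation step relies on Lemma~\ref{extra}, which supplies two distinct words $u,u'\in W$ of a common shape and a common origin $c^*$; using (H2) and Lemma~\ref{ends} to prepend a path from $\delta(d_0)$ to $c^*$ and to pad with cycles after $u$ or $u'$ so as to equalize the shapes of the various choices, one constructs arbitrarily many distinct words of a fixed large common shape starting at $\delta(d_0)$. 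Setting $l_1(d_0)$ equal to that shape (and $l_1$ arbitrary elsewhere) gives $|D_1|\ge|A|$.

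The main obstacle will be executing the inflation cleanly: Lemma~\ref{ends} only furnishes paths with shapes bounded below, so matching shapes exactly between different chainings of $u$ and $u'$ requires careful cycle-padding. The $|D|\ge|A|$ versus $|D|<|A|$ dichotomy is the only real case analysis; everything else is an application of the tools developed in Sections~1 and~3.
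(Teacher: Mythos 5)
Your proposal matches the paper's own (very terse) proof, which likewise applies Lemma~\ref{27} twice: once to inflate $D$ to a decorating set of cardinality at least $\#(A)$, and once more, using Lemma~\ref{ends} to pick for each $a\in A$ a distinct $d_a$ and a word $w_a$ from $\delta(d_a)$ to $a$ and setting $l(d_a)=\s(w_a)$, to make the decorating map surjective. The shape-equalization obstacle you flag in the inflation step can be sidestepped by iterating Lemma~\ref{27} instead of padding: taking $l(d)=\s(s_d u)$, where $u\ne u'$ are the words of Lemma~\ref{extra} and $s_d$ runs from $\delta(d)$ to their common origin, each application at least doubles the number of decorations, so finitely many applications reach $\#(A)$.
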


\begin{proof}
By general hypothesis, $D$ is nonempty and $A$ is finite. Use Lemma~\ref{27} once to
replace $D$ with $D''$ so that $\#(D'')\ge\#(A)$, and use it again, in conjunction with Lemma~\ref{ends},
to construct the pair $(D',\dd)$.
\end{proof}

\begin{corollary}\label{27++}
For a fixed alphabet $A$ and fixed transition matrices $M_j$, the isomorphism class of
$\cA_D \otimes \cK$ is independent of $D$.
\end{corollary}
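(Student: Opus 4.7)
The plan is to reduce to a canonical form for the decoration by combining Lemma~\ref{DN}, Lemma~\ref{27}, and Corollary~\ref{27+}, and then invoke the remark (made just before Lemma~\ref{DN}) that equivalent decorations give isomorphic algebras.

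First I would take two arbitrary decorations $\delta_1:D_1\to A$ and $\delta_2:D_2\to A$ and, using Corollary~\ref{27+}, replace each by an equivalent pair $(D_i',\delta_i')$ with $\delta_i'$ surjective; this yields $\cA_{D_i}\cong\cA_{D_i'}$ and hence, by tensoring with $\cK$, $\cA_{D_i}\otimes\cK\cong\cA_{D_i'}\otimes\cK$. Next, apply Lemma~\ref{DN} to each $D_i'$ to obtain
\begin{equation*}
\cA_{D_i}\otimes\cK\;\cong\;\cA_{D_i'}\otimes\cK\;\cong\;\cA_{D_i'\times\NN}.
\end{equation*}
It therefore suffices to produce an isomorphism $\cA_{D_1'\times\NN}\cong\cA_{D_2'\times\NN}$.

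The key observation is that, writing $\delta_i'':D_i'\times\NN\to A$ for the induced decoration $\delta_i''(d,n)=\delta_i'(d)$, surjectivity of $\delta_i'$ forces each fibre $(\delta_i'')^{-1}(a)=(\delta_i')^{-1}(a)\times\NN$ to be countably infinite for every $a\in A$. Consequently the fibres of $\delta_1''$ and $\delta_2''$ are in bijective correspondence fibrewise, and assembling these fibrewise bijections produces a bijection $\eta:D_1'\times\NN\to D_2'\times\NN$ satisfying $\delta_1''=\delta_2''\eta$. By the explicit definition of equivalence of decorations recalled in the text, the decorations $\delta_1''$ and $\delta_2''$ are equivalent, and so the associated algebras $\cA_{D_1'\times\NN}$ and $\cA_{D_2'\times\NN}$ are isomorphic (the isomorphism is the relabelling $s_{(u,w),(v,w')}\mapsto s_{(\eta(u),w),(\eta(v),w')}$, which visibly respects the relations~\eqref{rel1}). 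Concatenating the string of isomorphisms gives $\cA_{D_1}\otimes\cK\cong\cA_{D_2}\otimes\cK$.

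The only potential obstacle is the case where the original decorating maps $\delta_i$ fail to be surjective, since then the unadorned cross with $\NN$ would not produce fibres of matching cardinalities over letters outside the image. That obstacle is exactly what Corollary~\ref{27+} removes, so the argument is essentially a bookkeeping exercise once the preliminary lemmas are in hand.
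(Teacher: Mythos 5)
Your proposal is correct and follows essentially the same route as the paper: reduce to a surjective decoration via Corollary~\ref{27+}, absorb $\cK$ via Lemma~\ref{DN}, and observe that surjectivity makes every fibre of the induced decoration on $D'\times\NN$ countably infinite, hence the decoration is determined up to equivalence. The only cosmetic difference is that the paper exhibits the single canonical representative $\cA_{A\times\NN}$ rather than comparing two arbitrary decorations directly.
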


\begin{proof}
By Corollary~\ref{27+}, $\cA_D \cong \cA_{D'}$ for some $(D',\dd)$
with $\dd:D'\to A$ surjective. By Lemma~\ref{DN}, $\cA_{D' \times \NN} \cong \cA_{D'}\otimes\cK$.
Since $\dd$ is surjective, the decorating set $D' \times \NN$ is equivalent to the
decorating set $A \times \NN$: the inverse image of each $a\in A$
is countable. Thus $\cA_D\otimes\cK \cong \cA_{A \times \NN}$.
\end{proof}

Decorating sets other than $A$ and $A \times \NN$ arise naturally in the examples associated to
affine buildings.

\section{Construction of the algebra $\cA\otimes \cK$ as a crossed product}\label{ZRCP}

A vital tool in \cite{ck} was the expression of $\cO_A\otimes \cK$ as the crossed product
of an AF algebra by a $\ZZ$-action \cite[Theorem 3.8]{ck}.
The present section is devoted to an analogous result. In view of Lemma~\ref{DN},
this is done by establishing an isomorphism from $\cA_{A\times \NN}$ onto
the crossed product of an AF-algebra by a $\ZZ^r$-action. The AF-algebra will be isomorphic to
the algebra $\cF$ of Section~\ref{The AF subalgebra}, relative to the decorating set $A \times \NN$,
and the action of an element $k \in \ZZ^r$ will map the subalgebra $\cF_m$
onto $\cF_{m+k}$ for each $m \ge 0$.

Let a $C^*$-algebra $\cA'$ be defined just as $\cA$ is, with $D=A$ and with generators
$\{s'_{u,v}; u,v \in W \ \text{and} \ o(u) = o(v) \}$. The relations are the same as those
in (\ref{rel1}) except that in the sum (\ref{rel1c}), words are extended from the beginning.
The full relations are

\begin{subequations}\label{rel1'}
\begin{eqnarray}
{s'_{u,v}}^* &=& s'_{v,u} \label{rel1a'}\\
s'_{u,v}s'_{v,w}&=&s'_{u,w} \label{rel1b'}\\
s'_{u,v}&=&\displaystyle\sum_{\substack{w\in W;\s(w)=e_j,\\
t(w)=o(u)=o(v)}} s'_{wu,wv}   \label{rel1c'}\\
s'_{u,u}s'_{v,v}&=&0 \ \text{for} \ u,v \in  W_0, u \ne v. \label{rel1d'}
\end{eqnarray}
\end{subequations}

This may be thought of as using words with
extension in the negative direction. Alternatively,
replacing $M_j$ by the transpose matrix $M_j^t$ for each $j$ in the definition of $\cA$
results in an algebra isomorphic to $\cA'$.
Conditions (H0)--(H3) for the $M_j^t$ follow from the corresponding conditions for the $M_j$.
Consequently all the preceding results are valid for the algebra $\cA'$.

By Theorem~\ref{main1}, $\cA'$ is a simple separable $C^*$-algebra.
Let $\psi : \cA' \to \cB(\cH)$ be a nondegenerate representation of $\cA'$
on a \emph{separable} Hilbert space $\cH$. For $a \in A$,
let $\cH_a$ be the range of the projection $\psi(s'_{a,a})$. Then
$\cH = \bigoplus_{a\in A} \cH_a$. By Lemmas~\ref{f2} and \ref{nonzero+},
each $\cH_a$ is infinite dimensional.
Let $\cC = \bigoplus_{a\in A} \cK(\cH_a) \subset \cK(\cH)$.

For each $l \in \ZZ^r_+$ define a map $\alpha_l : \cC \to \cC$ by
\begin{equation}\label{alpha}\alpha_l(x) = \displaystyle\sum_{w \in W_l}
\psi(s'_{w,o(w)})x\psi(s'_{o(w),w}).
\end{equation}
Note that $\psi(s'_{w,o(w)})$ is a partial isometry with initial space $\cH_{o(w)}$
and final space lying inside $\cH_{t(w)}$.

\begin{lemma}\label{alpha1} Let $\alpha_l$ be as above.
\begin{enumerate}
\item $\alpha_l$ has image in $\cC$.
\item $\alpha_l$ is a $C^*$-algebra inclusion.
\item For $k,l \in \ZZ^r_+$, $\alpha_k\alpha_l=\alpha_{k+l}$.
\end{enumerate}
\end{lemma}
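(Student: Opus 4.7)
The plan is to work with the partial isometries $V_w := \psi(s'_{w, o(w)})$, so that $V_w^* = \psi(s'_{o(w), w})$ and $\alpha_l(x) = \sum_{w \in W_l} V_w x V_w^*$. The relations (\ref{rel1a'}) and (\ref{rel1b'}) give $V_w^* V_w = \psi(s'_{o(w), o(w)})$, which is the projection onto $\cH_{o(w)}$, and $V_w V_w^* = \psi(s'_{w, w})$. Applied to $a \in W_0$, the analog of Lemma~\ref{f2} for $\cA'$ yields $s'_{a,a} = \sum_{w \in W_l,\, t(w) = a} s'_{w,w}$, so for each $a \in A$ the subspaces $\psi(s'_{w,w})\cH$ (for $w \in W_l$ with $t(w) = a$) are pairwise orthogonal by the analog of Lemma~\ref{5} and sum to $\cH_a$. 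Hence each $V_w$ is a partial isometry from $\cH_{o(w)}$ isometrically onto $\psi(s'_{w,w})\cH \subseteq \cH_{t(w)}$.

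For (1), each $V_w x V_w^*$ is a compact operator supported on $\psi(s'_{w,w})\cH \subseteq \cH_{t(w)}$; grouping the finite sum by $t(w) = a$, the pairwise orthogonality of the supports places each partial sum in $\cK(\cH_a)$, so $\alpha_l(x) \in \bigoplus_a \cK(\cH_a) = \cC$.

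For (2), the $*$-preservation is immediate from the defining formula. For multiplicativity, expand $\alpha_l(x) \alpha_l(y) = \sum_{w, w' \in W_l} V_w x V_w^* V_{w'} y V_{w'}^*$; the analog of Lemma~\ref{5} for $\cA'$ gives $s'_{w,w} s'_{w', w'} = 0$ for distinct $w, w' \in W_l$, forcing $V_w^* V_{w'} = 0$, whereas $V_w^* V_w = \psi(s'_{o(w), o(w)})$ acts as the identity on $\cH_{o(w)}$ and commutes with the block-diagonal element $x \in \cC$, so $\alpha_l(x)\alpha_l(y) = \sum_w V_w xy V_w^* = \alpha_l(xy)$. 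Injectivity follows because $\cC = \bigoplus_a \cK(\cH_a)$ is a finite direct sum of simple $C^*$-algebras: by Lemma~\ref{ends} there is some $w_0 \in W_l$ with $o(w_0) = a$, and for $0 \ne x \in \cK(\cH_a)$ the term $V_{w_0} x V_{w_0}^*$ is a nonzero compact operator on $\psi(s'_{w_0, w_0})\cH$ that is not cancelled by the other orthogonally supported terms.

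For (3), the crucial computation is the product $V_{w_2} V_{w_1}$ for $w_1 \in W_l$, $w_2 \in W_k$. Expanding
\begin{equation*}
s'_{w_2, o(w_2)} = \sum_{w \in W_l,\, t(w) = o(w_2)} s'_{w w_2, w}
\end{equation*}
by the analog of Lemma~\ref{f2} for $\cA'$ (using $w\, o(w_2) = w$), and multiplying by $s'_{w_1, o(w_1)}$ on the right, the analog of Lemma~\ref{5} at shape $l$ kills all terms except $w = w_1$; this requires $t(w_1) = o(w_2)$ and gives $s'_{w_1 w_2, o(w_1)}$ by (\ref{rel1b'}). Hence $V_{w_2} V_{w_1} = V_{w_1 w_2}$ if $t(w_1) = o(w_2)$ and $0$ otherwise, so
\begin{equation*}
\alpha_k(\alpha_l(x)) = \sum_{\substack{w_1 \in W_l,\, w_2 \in W_k \\ t(w_1) = o(w_2)}} V_{w_1 w_2}\, x\, V_{w_1 w_2}^*.
\end{equation*}
By (H1), $(w_1, w_2) \mapsto w_1 w_2$ is a bijection with $W_{l+k}$, yielding $\alpha_k \alpha_l = \alpha_{l+k}$. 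The main obstacle is the bookkeeping in (3): one must recognize that the composition order $V_{w_2} V_{w_1}$ produces the concatenation $w_1 w_2$, so that the prepending-style relation (\ref{rel1c'}) meshes correctly with the shape-$l$ prefix of $W_{l+k}$.
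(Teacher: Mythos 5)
Your proposal is correct and follows essentially the same route as the paper: the key step in both is the composition formula $\psi(s'_{w_2,o(w_2)})\psi(s'_{w_1,o(w_1)})=\psi(s'_{w_1w_2,o(w_1w_2)})$ when $t(w_1)=o(w_2)$ (and $0$ otherwise), obtained by expanding the outer factor via the $\cA'$-analogue of Lemma~\ref{f2} and killing all but one term with the analogue of Lemma~\ref{5}, followed by the bijection $(w_1,w_2)\mapsto w_1w_2$ from (H1). Your treatment of injectivity and multiplicativity in part (2) is slightly more explicit than the paper's, but it is the same argument.
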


\begin{proof}
1. Clearly $\alpha_l(x) \in \cK(\cH)$. Moreover for fixed $w \in W$ with
$t(w) = a$, $\psi(s'_{w,o(w)})x\psi(s'_{o(w),w}) \in \cK(\cH_a)$.

2. Fix $w \in W_l$ with $o(w)=a$. Observe that $s'_{w,a}$ is a partial
isometry with initial projection $s_{a,a}$. Moreover for two different words
$w_1,w_2 \in W_l$, the range projections of $s'_{w_1,o(w_1)}$ and
$s'_{w_2,o(w_2)}$ are orthogonal. The result is now clear.

3. If $w_1 \in W_k$ and $w_2 \in W_l$ then according to
Lemmas~\ref{f2} and~\ref{5},
\begin{equation}\label{orth'}
s'_{w_1,o(w_1)}s'_{w_2,o(w_2)}=
\displaystyle\sum_
{\substack{
w_3 \in W_l \\
t(w_3)=o(w_1)
}}
s'_{w_3w_1,w_3}s'_{w_2,o(w_2)}
=
\begin{cases}
0 & \text{if $t(w_2) \ne o(w_1)$}\\
s'_{w_2w_1,o(w_2)} & \text{if $t(w_2)= o(w_1)$}.
\end{cases}
\end{equation}

Therefore
\begin{equation*}
\begin{split}
\alpha_k\alpha_l(x) & =
\displaystyle\sum_
{\substack{
w_1 \in W_k \\
w_2 \in W_l
}}
\psi(s'_{w_1,o(w_1)})\psi(s'_{w_2,o(w_2)})x\psi(s'_{o(w_2),w_2})\psi(s'_{o(w_1),w_1}) \\
& = \displaystyle\sum_
{\substack{
w_1 \in W_k \\
w_2 \in W_l \\
t(w_2)=o(w_1)
}}
\psi(s'_{w_2w_1,o(w_2)})x\psi(s'_{o(w_2),w_2w_1}) \\
& = \alpha_{k+l}(x).
\end{split}
\end{equation*}
\end{proof}

For each $m \in \ZZ^r$ let $\cC^{(m)}$ be an isomorphic copy of $\cC$, and for
each $l \in \ZZ^r_+$, let
$$\alpha_l^{(m)} : \cC^{(m)} \to \cC^{(m+l)}$$
be a copy of $\alpha_l$. Let $\cE = \varinjlim \cC^{(m)}$ be the direct limit
of the category of $C^*$-algebras with objects $\cC^{(m)}$ and morphisms $\alpha_l$ (\cite[Proposition 11.4.1]{kr}).
Then $\cE$ is an AF algebra. (See the discussion preceding Proposition~\ref{F}.)

If $x \in \cC$, let $x^{(m)}$ be the corresponding element of $\cC^{(m)}$. Then $x^{(m)}$ is identified with
$(\alpha_lx)^{(m+l)}$ for all $l \in \ZZ^r_+$. Define an action $\rho$ of $\ZZ^r$
on $\cE$ by $\rho(l)(x^{(m)})=x^{(m+l)}$.
Since $\ZZ^r$ is amenable, the full crossed product of $\cE$ by this action coincides with the reduced crossed product
\cite[Theorem 7.7.7]{ped},
and we denote it simply by $\cE \rtimes \ZZ^r$. The defining property of the crossed product says that there is
a unitary representation $m \mapsto U^m$ of $\ZZ^r$ into the multiplier algebra of
$\cE \rtimes \ZZ^r$ such that $\rho(l)(x^{(m)})=U^lx^{(m)}U^{-l}$, that is
\begin{equation}\label{covar}
U^lx^{(m)}=x^{(m+l)}U^{l}.
\end{equation}

\begin{theorem}\label{Z^rcross}
There exists an isomorphism $\phi : \cA_{A\times \NN} \to \cE \rtimes \ZZ^r$ where, moreover $\phi(\cF)=\cE$.
\end{theorem}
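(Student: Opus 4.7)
The plan is to construct the isomorphism $\phi$ spatially using the representation $\psi\colon \cA' \to \cB(\cH)$, then invoke the universal property (Lemma~\ref{6}) and the simplicity of $\cA_{A \times \NN}$ (Theorem~\ref{main1}). First, fix an orthonormal basis $\{\xi^a_n\}_{n \in \NN}$ of each $\cH_a$ (each such space is infinite-dimensional and separable). For each decorated word $\ou = ((a,n), u_0) \in \oW$ with $t(u_0) = b$, set
\[
\eta_{\ou} = \psi(s'_{u_0, a}) \xi^a_n \in \cH_b.
\]
Since the initial projection of $s'_{u_0,a}$ is $s'_{a,a}$, the operator $\psi(s'_{u_0, a})$ restricts to an isometry on $\cH_a$, so $\eta_{\ou}$ is a unit vector. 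I then define
\[
\phi(s_{\ou, \ov}) = \bigl(|\eta_{\ou}\rangle\langle\eta_{\ov}|\bigr)^{(\s(\ou))} U^{\s(\ou) - \s(\ov)} \in \cE \rtimes \ZZ^r,
\]
where $x^{(m)}$ denotes the image at level $m$ of $x \in \cC$ inside $\cE$.

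The main preliminary computation is that $\{\eta_{\ou} : \ou \in \oW_m,\, t(\ou) = b\}$ is an orthonormal basis of $\cH_b$ for every $m$ and $b$. For orthogonality, $\langle\eta_{\ou}, \eta_{\ou'}\rangle = \langle \xi^a_n, \psi(s'_{a, u_0} s'_{u'_0, a'}) \xi^{a'}_{n'}\rangle$ vanishes either because $s'_{u_0, u_0} s'_{u'_0, u'_0} = 0$ for $u_0 \ne u'_0$ of the same shape (by Lemma~\ref{5} applied to $\cA'$) or because $\langle\xi^a_n, \xi^a_{n'}\rangle = 0$ when $u_0 = u'_0$ but $n \ne n'$. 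For spanning, iterating the $\cA'$-analogue of Lemma~\ref{f2} yields $s'_{a,a} = \sum_{w \in W_m,\, t(w)=a} s'_{w,w}$, whence $\cH_a = \bigoplus_{w : t(w)=a,\,\s(w)=m} \psi(s'_{w, o(w)}) \cH_{o(w)}$; on each summand the vectors $\eta_{\ou}$ with $u_0 = w$ are exactly the isometric images of $\{\xi^{o(w)}_n\}_n$.

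Verification of relations (\ref{rel1a}), (\ref{rel1b}), and (\ref{rel1d}) is routine using the covariance $U^l x^{(m)} U^{-l} = x^{(m+l)}$ and the orthonormality just established. The delicate relation is (\ref{rel1c}); its key input is the identity $\psi(s'_{w, b}) \eta_{\ou} = \eta_{\ou w}$ for $w \in W_{e_j}$ with $o(w) = b$, which follows from the $\cA'$-multiplication rule (\ref{orth'}): $s'_{w, b} s'_{u_0, a} = s'_{u_0 w, a}$. Consequently
\[
\sum_{w : o(w) = b,\, \s(w) = e_j} |\eta_{\ou w}\rangle\langle\eta_{\ov w}| = \alpha_{e_j}\bigl(|\eta_{\ou}\rangle\langle\eta_{\ov}|\bigr),
\]
and under the direct-limit identification $\alpha_{e_j}(x)^{(m+e_j)} = x^{(m)}$ this sum at level $\s(\ou) + e_j$ collapses back to $\bigl(|\eta_{\ou}\rangle\langle\eta_{\ov}|\bigr)^{(\s(\ou))}$; appending $U^{\s(\ou) - \s(\ov)}$ recovers $\phi(s_{\ou, \ov})$. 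By Lemma~\ref{6}, $\phi$ extends to a $*$-homomorphism; since $\phi(s_{\ou, \ou}) \ne 0$, Theorem~\ref{main1} forces $\phi$ to be injective and therefore isometric.

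Finally, since the $\eta_{\ou}$ form an orthonormal basis, the rank-one operators $\bigl(|\eta_{\ou}\rangle\langle\eta_{\ov}|\bigr)^{(m)}$ for $\ou, \ov \in \oW_m^{(b)}$ span $\cK(\cH_b) \subset \cC^{(m)}$, so $\phi(\cF_m) = \cC^{(m)}$ and $\phi(\cF) = \overline{\bigcup_m \cC^{(m)}} = \cE$. For surjectivity onto the full crossed product, any $y \in \cC^{(m)}$ can, after lifting to a sufficiently large level $m'$ with $m' \ge m$ and $m' \ge l \vee 0$, be approximated by finite combinations of $\bigl(|\eta_{\ou}\rangle\langle\eta_{\ov}|\bigr)^{(m')}$ with $\s(\ou) = m'$ and $\s(\ov) = m' - l$; then $y U^l$ is a norm-limit of sums of $\phi(s_{\ou,\ov})$'s, and since $\phi$ is isometric its image is closed. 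The main obstacle throughout is relation (\ref{rel1c}): matching the $\cA$-style right-extension $\ou \mapsto \ou w$ to the left-action of $\psi(s'_{w, b})$ on the $\eta$-vectors requires precisely the reversed-multiplication identity (\ref{orth'}), which reconciles the right-extending conventions of $\cA$ with the left-extending conventions of $\cA'$.
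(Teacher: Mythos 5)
Your proposal is correct and follows essentially the same route as the paper's own proof: your vectors $\eta_{\ou}$ are exactly the paper's $\beta(\ow)=\psi(s'_{w,o(w)})\beta(d)$, your formula for $\phi(s_{\ou,\ov})$ agrees with the paper's via the covariance relation, and the verification of relation (\ref{rel1c}) through $\alpha_{e_j}$ and the identity (\ref{orth'}), the appeal to Lemma~\ref{6} and Theorem~\ref{main1}, and the surjectivity argument via rank-one operators at shifted levels all match the paper's argument. No gaps.
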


\begin{proof}
Let $D= A \times \NN$ and  $\delta(a,n)=a$.
Fix a map $\beta : D \to \cH$ so that $\{\beta(d); \delta(d)=a\}$ is an orthonormal
basis of $\cH_a$. For $\ow =(d,w) \in \oW_m$ define
$\beta(\ow) = \psi(s'_{w,o(w)})\beta(d)$,
in this way extending $\beta$ to a map $\beta : \overline W \to \cH$. Observe that for a fixed $w \in W$ with
$o(w)=a$, $\{\beta(d,w) ; d \in D, \delta(d)=a\}$ is an orthonormal basis for the range
of $s'_{w,w}$. Since, moreover the ranges of $\{s'_{w,w} ; w \in W_m\}$ are
pairwise orthogonal and sum to all of $\cH$, we see that $\{\beta(\ow) ; \ow \in \overline W_m\}$
is an orthonormal basis for $\cH$.

For $w \in W$ and $\ou =(d,u) \in \oW$, we have
\begin{equation*}
\psi(s'_{w,o(w)})\beta(\ou)  =
\psi(s'_{w,o(w)})\psi(s'_{u,o(u)})\beta(d)  =
\begin{cases}
0 & \text{if $o(w) \ne t(u)$}\\
\psi(s'_{uw,o(u)})\beta(d) & \text{if $o(w)=t(u)$},
\end{cases}
\end{equation*}
by equation (\ref{orth'}).
That is
\begin{equation*}
\psi(s'_{w,o(w)})\beta(\ou)  =
\begin{cases}
0 & \text{if $o(w) \ne t(u)$}\\
\beta(\overline uw) & \text{if $o(w)=t(u)$}.
\end{cases}
\end{equation*}

We will now define the map
$\phi : \cA_D \to \cE \rtimes \ZZ^r$.
For $\ou, \ov \in \oW$ with $t(\ou)=t(\ov)$ and $\s(\ou)=l$ and $\s(\ov)=m$ define
\begin{equation}\label{phi}
\phi(s_{\ov,\ou})= U^{m-l}\left(\beta(\ov) \otimes \overline{\beta(\ou)}\right)^{(l)}.
\end{equation}

We use the notation $\xi \otimes \overline \eta$ to denote the rank one operator
on a Hilbert space defined by $\zeta \mapsto \langle \zeta, \eta \rangle \xi$, so that
when $\xi$,$\eta$ have norm one, $\xi \otimes \overline \eta$ is a partial isometry
with initial projection $\eta \otimes \overline \eta$ and final projection
$\xi \otimes \overline \xi$. If the vectors $\xi$, $\eta$ vary through an orthonormal basis
for a Hilbert space then the operators $\xi \otimes \overline \eta$ form a system of matrix units
for the compact operators on that Hilbert space.
By equation (\ref{covar}) we have
\begin{equation}\label{covar+}
\phi(s_{\ov,\ou})= \left(\beta(\ov) \otimes \overline{\beta(\ou)}\right)^{(m)}U^{m-l}.
\end{equation}

We show that the partial isometries $\phi(s_{\ov,\ou})$ satisfy the relations (\ref{rel1}).
Relation (\ref{rel1d}) is immediate from the definition of $\beta(\ow)$, since
if $\ow \in \oW_0$ then
$\phi(s_{\ow,\ow})= \left(\beta(\ow) \otimes \overline{\beta(\ow)}\right)^{(0)}$.

Relation (\ref{rel1a}) is satisfied since, by (\ref{covar+}),
\begin{equation*}
\phi(s_{\ov,\ou})^* = \left(\left(\beta(\ov) \otimes
\overline{\beta(\ou)}\right)^{(m)}U^{m-l}\right)^*
= U^{l-m}\left(\beta(\ou) \otimes \overline{\beta(\ov)}\right)^{(m)}= \phi(s_{\ou,\ov}).
\end{equation*}
If $t(\ow)=t(\ov)$ and $\s(\ow)=n$, then
\begin{equation}
\begin{split}
\phi(s_{\ow,\ov})\phi(s_{\ov,\ou}) &=
U^{n-m}\left(\beta(\ow) \otimes \overline{\beta(\ov)}\right)^{(m)}U^{m-l}\left(\beta(\ov) \otimes
\overline{\beta(\ou)}\right)^{(l)} \\
&=
U^{n-m}U^{m-l}\left(\beta(\ow) \otimes \overline{\beta(\ov)}\right)^{(l)}\left(\beta(\ov) \otimes
\overline{\beta(\ou)}\right)^{(l)} \\
&=
U^{n-l}\left(\beta(\ow) \otimes \overline{\beta(\ou)}\right)^{(l)} \\
&=
\phi(s_{\ow,\ou}).
\end{split}
\end{equation}
Thus (\ref{rel1b}) is satisfied. Finally (\ref{rel1c}) is a consequence of the following calculation.
\begin{equation*}
\begin{split}
\phi(s_{\ov,\ou}) &= U^{m-l}\left(\beta(\ov) \otimes \overline{\beta(\ou)}\right)^{(l)} \\
&= U^{m-l}\left(\alpha_k(\beta(\ov) \otimes \overline{\beta(\ou)})\right)^{(l+k)} \\
&= U^{m-l}\displaystyle \sum_{w \in W_k}
\left(\psi(s'_{w,o(w)})\beta(\ov) \otimes \overline{\psi(s'_{w,o(w)})\beta(\ou)}\right)^{(l+k)} \\
&= U^{m-l}\displaystyle \sum_{\substack{w \in W_k \\ o(w)=t(\ou)}}
\left(\beta(\ov w) \otimes \overline{\beta(\ou w)}\right)^{(l+k)} \\
&= \displaystyle \sum_{\substack{w \in W_k \\ o(w)=t(\ou)}}
\phi(s_{\ov w,\ou w}).
\end{split}
\end{equation*}

All the relations (\ref{rel1}) are satisfied by the partial isometries $\phi(s_{\ov,\ou})$.
Therefore by Lemma~\ref{6}, $\phi$ defines a *-homomorphism. Clearly $\phi$ is not the zero map; hence it
is an isometry, by Theorem~\ref{main1}. It only remains to show that $\phi$ is
onto.

Fix $m, n \in \ZZ^r$ so that $m, m+n \ge 0$. For $\ou \in \oW_m$ and $\ov \in \oW_{m+n}$
with $t(\ou)=t(\ov)=a$, we have
\begin{equation}
\phi(s_{\ov,\ou})= U^n\left(\beta(\ov) \otimes \overline{\beta(\ou)}\right)^{(m)}.
\end{equation}

As the sets
$\{\beta(\ov) ;\ \ov \in \oW_{m+n}, t(\ov)=a\}$ and
$\{\beta(\ou) ;\ \ou \in \oW_m, t(\ou)=a\}$
are bases for $\cH_a$, the image of $\phi$ contains a dense subset of $U^n\left(\cK(\cH_a)\right)^{(m)}$.
Therefore the image of $\phi$ contains $U^n\left(\cK(\cH_a)\right)^{(m)}$, for each $a \in A$.
It therefore contains $U^n\cC^{(m)}$.

Also, for any $k \ge 0$,
$$\phi(\cA_{A\times \NN}) \supseteq U^n\cC^{(m)} \supseteq U^n \alpha_k^{(m-k)}\cC^{(m-k)}= U^n\cC^{(m-k)}.$$
It follows that $\phi(\cA_{A\times \NN}) = \cE \rtimes \ZZ^r$.
It is clear from the definitions that $\phi(\cF_m)=\cC^{(m)}$ and that $\phi(\cF)=\cE$.
\end{proof}

\begin{corollary}\label{Z^rcross+} $\cA \otimes\cK \cong \cE \rtimes \ZZ^r$.
\end{corollary}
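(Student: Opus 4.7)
The plan is very short: this corollary is essentially a repackaging of \thmref{Z^rcross} together with the earlier stabilization results. The only substantive content in \thmref{Z^rcross} is the isomorphism $\cA_{A\times\NN}\cong\cE\rtimes\ZZ^r$, which is tied to the specific decorating set $A\times\NN$. To get from there to $\cA\otimes\cK$ (for an arbitrary decorating set $D$), I invoke the tensor-by-compacts machinery of Section~5.

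Concretely, I would argue in two steps. First, by Corollary~\ref{27++} the isomorphism class of $\cA_D\otimes\cK$ does not depend on $D$; in particular, applying Lemma~\ref{DN} with $D$ replaced by $A$ gives
\begin{equation*}
\cA\otimes\cK \;=\; \cA_D\otimes\cK \;\cong\; \cA_{A\times\NN}.
\end{equation*}
(Indeed, the proof of Corollary~\ref{27++} already exhibits the common value of $\cA_D\otimes\cK$ as $\cA_{A\times\NN}$.) Second, \thmref{Z^rcross} provides an isomorphism $\cA_{A\times\NN}\cong\cE\rtimes\ZZ^r$. Composing these two isomorphisms yields $\cA\otimes\cK\cong\cE\rtimes\ZZ^r$.

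There is no real obstacle here. The only thing to keep straight is that the algebra $\cE$ was constructed intrinsically from the representation $\psi$ of $\cA'$ and the maps $\alpha_l$, with no reference to $D$, so it is legitimate to compare $\cE\rtimes\ZZ^r$ with $\cA\otimes\cK$ for any allowed $D$. The work of reducing an arbitrary $D$ to the canonical choice $A\times\NN$ was already carried out in Lemmas~\ref{DN}, \ref{27} and Corollaries~\ref{27+}, \ref{27++}, so the present corollary is purely a one-line composition.
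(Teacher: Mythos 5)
Your argument is correct and matches the paper's proof, which likewise deduces the corollary immediately from Theorem~\ref{Z^rcross}, Lemma~\ref{DN} and Corollary~\ref{27++}. You have merely spelled out the one-line composition in slightly more detail.
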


\begin{proof} This follows immediately from Theorem~\ref{Z^rcross}, Lemma~\ref{DN} and Corollary~\ref{27++}.
\end{proof}

\begin{corollary}\label{nuclear} $\cA$ is nuclear.
\end{corollary}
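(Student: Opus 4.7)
The plan is to read off nuclearity directly from the identification $\cA\otimes\cK\cong\cE\rtimes\ZZ^r$ established in Corollary~\ref{Z^rcross+}. First, I would note that $\cE$ is an AF-algebra (constructed as a direct limit of finite direct sums of elementary algebras $\cK(\cH_a)$ via the inclusions $\alpha_l$), and AF-algebras are nuclear, since they are inductive limits of finite-dimensional, hence nuclear, $C^*$-algebras, and nuclearity is preserved under such inductive limits.

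Next, since the acting group $\ZZ^r$ is amenable, the full and reduced crossed products agree (this was already noted in the construction) and the crossed product of a nuclear $C^*$-algebra by an action of an amenable discrete group is nuclear. Applying this to the nuclear AF-algebra $\cE$ and the action $\rho$ gives that $\cE\rtimes\ZZ^r$ is nuclear. Consequently $\cA\otimes\cK$ is nuclear.

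Finally, nuclearity is stable under Morita equivalence, and in particular a $C^*$-algebra $B$ is nuclear if and only if $B\otimes\cK$ is nuclear. Hence $\cA$ is nuclear.

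There is no real obstacle: every ingredient is a standard, well-documented permanence property of the class of nuclear $C^*$-algebras. The only thing worth a sentence of justification, should the reader want one, is the stability of nuclearity under tensoring with $\cK$, which follows from the fact that nuclearity passes to hereditary $C^*$-subalgebras of nuclear algebras (and $\cA$ is isomorphic to a corner of $\cA\otimes\cK$).
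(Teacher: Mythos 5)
Your argument is exactly the paper's: the paper's one-line proof invokes precisely the facts that AF-algebras are nuclear, that nuclearity is preserved by crossed products by amenable groups, and that it is stable under stable isomorphism, applied to Corollary~\ref{Z^rcross+}. Your version just spells out these standard permanence properties in more detail; it is correct and takes essentially the same approach.
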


\begin{proof}
This follows because the class of nuclear $C^*$-algebras is closed under stable isomorphism and
crossed products by amenable groups, and contains the AF algebras.
\end{proof}

\begin{remark}\label{previous} Suppose that $D$ is finite, so that $\cA$ is unital by
Remark~\ref{finitedecorate}.
Then it has been established that the separable unital $C^*$-algebra $\cA$ is
simple (Theorem~\ref{main1}), nuclear (Corollary~\ref{nuclear}) and purely infinite
(Proposition~\ref{purely_infinite}). Corollary~\ref{Z^rcross+} also shows that $\cA$ belongs to the
bootstrap class ${\mathcal N}$, which contains the AF-algebras and is closed under
stable isomorphism and crossed products by $\ZZ$. Thus $\cA$ satisfies the Universal Coefficient Theorem
\cite[Theorem 23.1.1]{bl}. The work of \textsc{E. Kirchberg} and \textsc{C. Phillips}
 \cite{k,k'},\cite{p} therefore shows that $\cA$  is classified by its K-groups.
\end{remark}

\section{Boundary actions on affine buildings of type~$\wt A_2$}\label{boundary-algebra}

Let $\cB$ be a locally finite thick affine building of type~$\wt A_2$.
This means that $\cB$ is a chamber system consisting of vertices, edges and triangles
(\emph{chambers}).  An \emph{apartment} is a subcomplex of $\cB$ isomorphic to the Euclidean
plane tesselated by equilateral triangles.  A \emph{sector} (or \emph{Weyl chamber}) is a
$\frac{\pi}{3}$-angled sector made up of chambers in some apartment.
Two sectors are  \emph{equivalent} (or parallel) if their
intersection contains a sector. We refer to \cite{bro,g,ron} for the theory of buildings.
Shorter introductions to the theory are provided by \cite{bro',ca,st}.

The boundary $\Omega$ is defined to be the set of equivalence classes of sectors in $\cB$.
In $\cB$ we fix some vertex $O$, which we assume to have type $0$.
For any $\omega \in \Omega$ there is a unique sector $[O,\omega)$ in the
class $\omega$ having base vertex $O$ \cite[Theorem 9.6]{ron}.
The boundary $\Omega$ is a totally disconnected compact Hausdorff space with a base for the
topology given by sets of the form
$$
\Omega(v) = \left \{ \omega \in \Omega : [O,\omega) \ \text{ contains } v \right \}
$$
where $v$ is a vertex of $\cB$ \cite[Section 2]{cms}. We note that if $[O,\omega) \ \text{ contains } v$
then $[O,\omega)$ contains the parallelogram $\conv(O,v)$.

Let $\G$ be a group of type rotating automorphisms of $\cB$ that acts freely on the vertex set with finitely many orbits. See \cite{cmsz} for a discussion and examples in
the case where $\G$ acts transitively on the vertex set. There is a natural induced
action of $\G$ on the boundary $\Omega$ and we can form the universal crossed product algebra $C(\Om)\rtimes \G$ \cite{ped}. The purpose of this section is to identify $C(\Om)\rtimes \G$ with an algebra of the form $\cA$.

Let $\fa$ be a Coxeter complex of type~$\wt A_2$, which we
shall use as a model for the apartments of $\cB$.  Each vertex of
$\fa$ has type $0$,$1$, or $2$. Fix as the origin in $\fa$ a vertex of
type $0$.  Coordinatize the vertices by $\ZZ^2$ by choosing a fixed
sector in $\fa$ based at the origin and defining the positive
coordinate axes to be the corresponding sector panels (\emph{cloisons
de quartier}).  The coordinate axes are therefore given by two of the
three walls of $\fa$ passing through the origin.  Let $\ft$ be a model
\emph{tile} in $\fa$ and let $\fp_m$ be a model parallelogram in $\fa$
of \emph{ shape} $m = (m_1,m_2)$, as illustrated in
Figure~\ref{models}.  As per Figure~\ref{models}, assume that $\ft$
and $\fp_m$ are both based at $(0,0)$.  Thus $\ft$ is the model
parallelogram of shape $(0,0)$.

\refstepcounter{picture}
\begin{figure}[htbp]\label{models}
\hfil
\beginpicture
\setcoordinatesystem units <0.5cm,0.866cm>  point at -6 0  % sets scale
\setplotarea x from -5 to 5, y from -3.5 to 4.5            % sets plot size up
\put{$_{(0,0)}$}  [t]   at  1   -2.2
\put{$_{(m_1+1,m_2+1)}$}  [b]   at -1    4.2
\put{$_{(m_1+1,0)}$}  [l]   at  3.2  0
\put{$_{(0,m_2+1)}$}  [r]   at -3.2    2
\setlinear \plot   1    -2        3   0 /
\setlinear \plot  -3     2      -1   4 /
\setlinear \plot  -1     4       3   0 /
\setlinear \plot  -3     2        1  -2 /
\put {A model parallelogram $\fp_m$.}   at   0 -3.5
\setcoordinatesystem units <0.5cm,0.866cm> point at 6 0   % sets scale
\setplotarea x from -5 to 5, y from -3.5 to 4.5         % sets plot size up
\put{$_{(0,0)}$}     [t]   at  0  -0.2
\put{$_{(1,1)}$} [b] at  0 2.2
\put{$_{(0,1)}$} [r]  at -1.2 1
\put{$_{(1,0)}$}  [l] at  1.2 1
\putrule from -1 1 to 1 1
\setlinear \plot -1 1  0 0  1 1  /
\setlinear \plot -1 1  0 2  1 1 /
\put {The model tile $\ft$.}   at   0 -2
\endpicture
\hfil
\caption{}
\end{figure}

Let $\fT$ denote the set of type rotating isometries $i :\ft \to\cB$,
and let $A = \G\backslash \fT$. We will use the set $A$ as an alphabet
to define an algebra $\cA$.  Let $\fP_m$ denote the set of type
rotating isometries $p:\fp_m \to \cB$, and let $\fW_m = \G\backslash
\fP_m$.  Let $\fP = \bigcup_m \fP_m$ and $\fW = \bigcup_m \fW_m$.

If $p \in \fP_m$, then define $t(p) : \ft \to \cB$ by $t(p)(l) = p(m+l)$. Then $t(p)$ is a type rotating isometry
such that $t(p)(\ft)$ lies in $p(\fp_m)$
 with $t(p)(1,1)=p(m_1+1,m_2+1)$, as illustrated in Figure~\ref{t(p)}. Thus $t(p) \in \fT$.
Similarly $o(p) : \ft \to \cB$ is defined by $o(p)=p\vert_{\ft}$.

\refstepcounter{picture}
\begin{figure}[htbp]
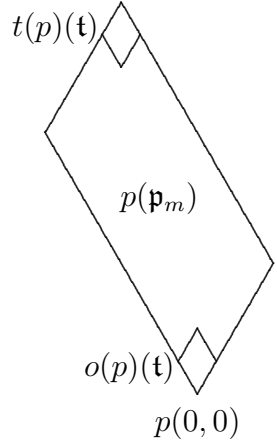
\label{t(p)}
\hfil
\centerline{
\beginpicture
\setcoordinatesystem units <0.5cm,0.866cm>    % sets scale
\put{$p(0,0)$}  [t] at  1 -2.2
\put{$p(\fp_m)$}  at 0  1
\put{$t(p)(\ft)$}  [r] at  -1.6 3.6
\put{$o(p)(\ft)$}  [r] at  0.4 -1.6
\setplotarea x from -6 to 6, y from -3 to 4         % sets plot size up
\setlinear
\plot 1  -2    3  0   -1  4     -3 2   1 -2   /
\plot -1.5 3.5  -1 3  -0.5 3.5 /
\plot   0.5 -1.5   1 -1   1.5 -1.5 /
\endpicture
}
\hfil
\caption{The initial and terminal tiles.}
\end{figure}

The matrices $M_1$, $M_2$ with entries in $\{0,1\}$
are defined as follows.
If $a,b \in A$, say that $M_1(b,a)=1$ if and only if there exists $p \in \fP_{(1,0)}$
such that $a=\G o(p)$ and $b=\G t(p)$.
Similarly, if $c \in A$ then
$M_2(c,a)=1$ if and only if there exists $p \in \fP_{(0,1)}$
such that $a=\G o(p)$ and $c=\G t(p)$.
The definitions are illustrated in Figure~\ref{tiles}, for suitable
representative isometries $i_a$,$i_b$,$i_c$ in $\fT$.

\refstepcounter{picture}
\begin{figure}[htbp]
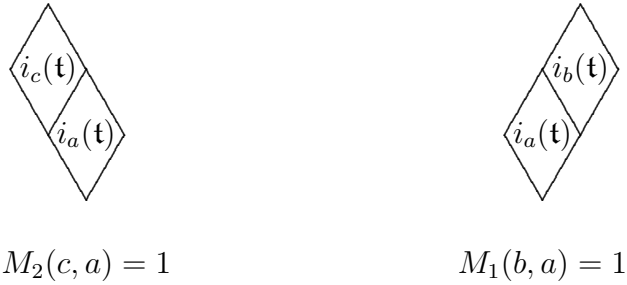
\label{tiles}
\hfil
\centerline{
\beginpicture
\setcoordinatesystem units <0.5cm,0.866cm>  point at -6 0 % sets scale
\setplotarea x from -5 to 5, y from -1 to 3.3         % sets plot size up
\put{$i_b(\ft)$}   at  1  2
\put{$i_a(\ft)$}   at  0 1
\put{$M_1(b,a)=1$}   at  0 -1
\setlinear \plot -1 1  0 0  1 1  /
\setlinear \plot -1 1  0 2  1 1 /
\setlinear \plot 1 1  2 2  1 3  0 2  /
\setcoordinatesystem units <0.5cm,0.866cm>  point at 6 0 % sets scale
\setplotarea x from -5 to 5, y from -1 to 3.3         % sets plot size up
\put{$i_c(\ft)$}   at  -1  2
\put{$i_a(\ft)$}   at  0 1
\put{$M_2(c,a)=1$}   at  0 -1
\setlinear \plot -1 1  0 0  1 1  /
\setlinear \plot -1 1  0 2  1 1 /
\setlinear \plot -1 1  -2 2  -1 3  0 2  /
\endpicture
}
\hfil
\caption{Definition of the transition matrices.}
\end{figure}

In order to apply the general results we need to verify that
conditions (H0)-(H3) are satisfied. We address this question in the
subsection~\ref{conditionsH0-H4}.  Until further notice we simply
impose the following

\begin{quote}
{\bf ASSUMPTION}: Conditions {\rm (H0)-(H3)} are satisfied.
\end{quote}

We can now define the set of words $W_m$ of shape $m \in \ZZ_+$ based
on the alphabet $A$ and the transition matrices $M_1$,$M_2$, as in
Section 1.  There is also a natural map $\alpha : \fP \to W$, defined
as follows. Given $p \in \fP_m$, construct $\alpha(p)=w$ according to
the following procedure. For each $n \in \ZZ^2_+$ with $0 \le n \le
m$, let $w(n)=\G p_n$ where $p_n \in \fT$ is defined by
$p_n(l)=p(n+l)$, for $(0,0) \le l \le (1,1)$.  Since the translation
$l \mapsto n+l$ is a type-rotating isometry of $\fa$, it follows that
$p_n$ is type rotating, hence an element of $\fT$.  Passing to the
quotient by $\G$ gives a well defined map $\alpha : \fW_m \to W_m$ and
hence a map $\alpha : \fW \to W$.  The use of $\alpha$ for the
different maps should not cause confusion.  If $a= \G i \in A = \fW_0$
then $\alpha(a)=a$, so it is clear that $o(\alpha(p))=\G o(p)$ and
$t(\alpha(p))= \G t(p)$.

\begin{lemma}\label{frWm}
The map $\alpha$ is a bijection from $\fW_m$ to $W_m$ for each $m \in \ZZ^2_+$.
\end{lemma}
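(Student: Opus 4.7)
The plan is to prove injectivity and surjectivity of $\alpha$ separately. Injectivity follows cleanly from freeness of the $\G$-action, while surjectivity requires an inductive geometric construction within apartments of $\cB$ and is where the real work lies.

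For injectivity, suppose $p, p' \in \fP_m$ satisfy $\alpha(\G p) = \alpha(\G p')$, i.e.\ $\G p_n = \G p'_n$ for every $n \in [0,m]$. For each $n$ there is an element $\gamma_n \in \G$ with $\gamma_n \cdot p_n = p'_n$, and this $\gamma_n$ is unique because $\G$ acts freely on vertices and each $p_n(\ft)$ contains vertices. Whenever $n$ and $n+e_j$ both lie in $[0,m]$, the tiles $p_n(\ft)$ and $p_{n+e_j}(\ft)$ share at least one vertex $v$, and the corresponding image tiles share $\gamma_n v = \gamma_{n+e_j} v$. Freeness forces $\gamma_n = \gamma_{n+e_j}$, so $\gamma_n$ is constant on the connected set $[0,m]$, say equal to $\gamma$. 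Then $p' = \gamma \cdot p$ and $\G p = \G p'$.

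For surjectivity I induct on $|m| = m_1 + m_2$. The base case $m = 0$ is immediate since $W_0 = A = \fW_0$ by definition. For $|m| \ge 1$, pick $j$ with $m_j \ge 1$, set $m' = m - e_j$, and take the prefix $w' = w|_{[0,m']} \in W_{m'}$. By induction there is $p' \in \fP_{m'}$ with $\alpha(\G p') = w'$. The image $p'(\fp_{m'})$ is a finite convex subcomplex of $\cB$, hence lies in some apartment $\Sigma$. I will extend $p'$ by the new slab of tiles in direction $e_j$, working one tile at a time.

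The first tile of the slab attaches by a single transition of the form $M_j(b,a) = 1$: by definition of $M_j$ there exists some parallelogram in $\fP_{e_j}$ witnessing this transition modulo $\G$, and after applying a suitable element of $\G$ we get a tile of type $b$ attached to the specific tile of type $a$ already placed in $\cB$. Each subsequent tile of the slab is constrained both by the tile below in $p'$ (via $M_j$) and by the preceding tile of the slab (via $M_{3-j}$); both neighbors lie in $\Sigma$, so there is a unique geometric completion of the local L-shape inside $\Sigma$, and \lemref{2} ensures that the type of this forced completion is exactly the value prescribed by $w$. Iterating through the slab produces the required $p \in \fP_m$ with $\alpha(\G p) = w$. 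The main obstacle will be this final consistency check: that the apartment-determined type of each new slab tile really matches the combinatorial value dictated by $w$. This reduces to reconciling the geometric extension of a parallelogram inside a $\wt A_2$ apartment with the combinatorial word extension of \lemref{1} and \lemref{2}, using that $M_1, M_2$ were defined precisely as the $\G$-orbit data of geometric transitions in $\cB$.
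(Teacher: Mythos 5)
Your injectivity argument is the same as the paper's and is correct. The surjectivity argument, however, has a genuine gap at its central step. You extend $p'$ tile by tile along the new slab and claim each new tile is determined because ``both neighbors lie in $\Sigma$,'' where $\Sigma$ is an apartment containing $p'(\fp_{m'})$. This is not so: the first tile of the slab is produced by translating some element of $\fP_{e_j}$ by a suitable $\gamma\in\G$ so that it attaches to the correct tile of $p'(\fp_{m'})$, and since the building is thick there are several tiles in the relevant position, with no reason for the chosen one to lie in $\Sigma$. Consequently the preceding slab tile in each of your ``local L-shapes'' need not lie in $\Sigma$, and the phrase ``unique geometric completion inside $\Sigma$'' does not apply. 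What you actually need is that three tiles forming the (strongly isometric) image of a minimal gallery lie in \emph{some} apartment, and that the fourth tile completing the square is uniquely determined there, independently of the apartment chosen. That is exactly the building-theoretic content the paper supplies: it lifts $w$ along a staircase path from $0$ to $m$ to a gallery of tiles in $\cB$, observes that the corresponding gallery in $\fp_m$ is minimal, and invokes the theorem that a strong isometry on a minimal gallery extends uniquely to the convex hull inside an apartment (\cite[p.~90]{bro}). Note moreover that the paper's own proof of the ``unique completion of an L-shape'' fact (Proposition~\ref{M_1M_2}, Figure~\ref{unique_extension}) explicitly refers back to the gallery argument of this lemma, so your induction cannot borrow that fact without circularity.

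The deferred ``consistency check'' is the lesser problem: once a $p\in\fP_m$ exists whose associated word agrees with $w$ along a suitable connected chain of positions, the uniqueness in (H1) forces $\alpha(p)=w$ everywhere, which is how the paper finishes and how you could finish too. But as written, the existence and uniqueness of each new slab tile is asserted rather than proved, and the justification offered for it is incorrect.
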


\begin{proof}
Suppose that $\alpha(p)=\alpha(p')$.
Then $\G p_n = \G p'_n$ for $0 \le n \le m$. For each $n$ there exists $\g_n$ so that $\g_np_n=p_n'$.
Since $\G$ acts freely on the vertices, each $\g_n$ is uniquely determined.
Moreover, since $p_n$ and $p_{n+e_j}$ share a pair of vertices in their image, it must be true that
$\g_n=\g_{n+e_j}$. By induction, the $\g_n$ have a common value, $\g$, and $\g p=p'$.
Thus $\G p = \G p'$.

It remains to show that $\alpha$ is surjective. Let $w \in
W_m$. Choose a path $(n_0,\dots,n_k)$ of points in $\ZZ^2$ so that
$n_0=0, n_k=m$, and each difference $n_{j+1}-n_j$ is either $e_1$ or
$e_2$.  Choose representative type rotating isometries $i_0,i_1, \dots
,i_k$ from $\ft$ into $\cB$ with $\G i_r=w(n_r)$ so that $i_r(\ft)$
and $i_{r+1}(\ft)$ are adjacent tiles in $\cB$, according to the two
possibilities in Figure~\ref{tiles}. This defines a gallery
$\fG=\{i_0(\ft),i_1(\ft), \dots ,i_k(\ft)\}$.  Let $\fG_0=\{C_0,C_1,
\dots,C_k\}$ be the corresponding gallery in $\fp_m$ with
$C_0=\ft$. It is clear from Figure~\ref{inductive step} that $\fG_0$
is a minimal gallery in $\fa$.  (The elements of $\fG$ and $\fG_0$ are
tiles rather than chambers, but this is immaterial.) The obvious map
$p : \fG_0 \to \fG$ is a strong isometry (preserves generalized
distance). Therefore $\fG$ is contained in an apartment and $p$
extends to a strong isometry $p$ from the convex hull
$\conv(\fG_0)=\fp_m$ into that apartment. See \cite[p.\ 90,\
Theorem]{bro} and \cite[Appendix B]{bro'}.  Thus $p \in \fP_m$ and
since $\alpha(p)$ agrees with $w$ at $n_0,n_1,\dots,n_k$, (H1) implies
that $\alpha(p)=w$.
\end{proof}

\refstepcounter{picture}
\begin{figure}[htbp]\label{inductive step}
\hfil
\centerline{
\beginpicture
\setcoordinatesystem units <0.5cm,0.866cm>    % sets scale
\setplotarea x from -6 to 6, y from -6 to 2         % sets plot size up
\setlinear
\plot 1  2  -2.5 -1.5  1 -5  4.5 -1.5  1 2  /
\plot 1 -2  0.5 -1.5  1 -1  1.5 -1.5  1 -2 /
\plot 1 -5 1.5 -4.5 1 -4 0.5 -4.5 /
\plot 1 2  1.5 1.5  1 1  0.5 1.5 /
\linethickness=4pt
\setdashes
\plot  1.25 -1.25   2.5 0    1.25  1.25  /
\plot  0.75 -1.75  -0.5 -3  0.75 -4.25  /
\endpicture
}
\hfil
\caption{A minimal gallery in in $\fa$.}
\end{figure}

Let $\ofW_m$ denote the set of type rotating isometries $p :\fp_m \to
\cB$ such that $p(0,0)=O$ and let $\ofW = \bigcup_m \ofW_m$.  Let $D$
denote the set of type-rotating isometries $d : \ft \to \cB$ such that
$d(0,0)=O$.  Let $\delta : D \to A$ be given by $\delta(d)=\G d$. The
map $\delta$ is injective since $\G$ acts freely on the vertices of
$\cB$. Moreover $\delta$ is surjective if and only if $\G$ acts
transitively on the vertices. Define $\oa : \ofW \to \oW$ by
$\oa(p)=\left(o(p), \alpha(\G p)\right)$. (Recall that $o(p)=
p\vert_{\ft}$.)

\begin{lemma} \label{oalpha}
The map $\oa$ is a bijection from $\ofW_m$ onto $\oW_m$ for each $m \in \ZZ^2_+$.
\end{lemma}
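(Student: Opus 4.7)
The plan is to prove injectivity and surjectivity of $\oa$ separately, leaning on Lemma~\ref{frWm} (the bijection $\alpha : \fW_m \to W_m$) and on the fact that $\G$ acts freely on the vertex set of $\cB$.

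For injectivity, suppose $\oa(p) = \oa(p')$ for $p, p' \in \ofW_m$. Then $o(p) = o(p')$ as maps $\ft \to \cB$, and $\alpha(\G p) = \alpha(\G p')$. By Lemma~\ref{frWm}, $\G p = \G p'$, so there exists $\g \in \G$ with $\g p = p'$. Restricting to $\ft$, $\g \circ o(p) = o(p') = o(p)$, so $\g$ fixes every vertex in the image of $o(p)$ (for instance $O = p(0,0)$). Since $\G$ acts freely on the vertices of $\cB$, $\g$ must be the identity, whence $p = p'$.

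For surjectivity, take $(d, w) \in \oW_m$, so $d \in D$ and $w \in W_m$ with $o(w) = \delta(d) = \G d$. By Lemma~\ref{frWm} there exists $q \in \fW_m$ with $\alpha(q) = w$; choose a representative $p_0 \in \fP_m$ with $q = \G p_0$. Then $\G o(p_0) = o(\alpha(q)) = o(w) = \G d$, so we may choose $\g \in \G$ with $\g \circ o(p_0) = d$. Set $p = \g p_0 \in \fP_m$. Then $o(p) = \g \circ o(p_0) = d$, and in particular $p(0,0) = d(0,0) = O$, so $p \in \ofW_m$. Finally $\G p = \G p_0 = q$, hence $\alpha(\G p) = w$, so $\oa(p) = (d, w)$.

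The only subtlety, and the one step that requires care, is the injectivity argument: one needs to know that the $\g$ produced by $\G p = \G p'$ must be the identity. This is where the hypothesis that $\G$ acts freely on \emph{vertices} (not merely on chambers or tiles) is essential, applied to any single vertex in the image of $o(p)$, such as $p(0,0) = O$. Once this is noted, the rest is routine bookkeeping.
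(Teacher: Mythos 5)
Your proof is correct and follows essentially the same route as the paper's: injectivity via Lemma~\ref{frWm} plus freeness of the $\G$-action on vertices, and surjectivity by lifting $w$ through Lemma~\ref{frWm} and adjusting the representative by a group element to pin down the initial tile. Your explicit observation that the $\g$ with $\g p = p'$ fixes $O=p(0,0)$ and hence is the identity merely spells out a step the paper leaves implicit.
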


\begin{proof}
If $\oa(p_1)=\oa(p_2)$ then $o(p_1)=o(p_2)$; moreover $\G p_1=\G p_2$,
by Lemma~\ref{frWm}.  Since $\G$ acts freely on the vertices, it
follows that $p_1=p_2$.  Therefore $\oa$ is injective.

To see that it is surjective, let $\ow = (d,w) \in \oW_m$, where $w
\in W_m$ and $d \in D$. By Lemma~\ref{frWm}, there exists $p \in
\fP_m$ such that $\alpha( \G p)=w$. Then
\begin{equation*}
\G d=\delta (d)=o(w)=o(\alpha(\G p))= \G o(p).
\end{equation*}
Replacing $p$ by $\g p$ for suitable $\g \in \G$ ensures that $o(p)=d$
and hence $p\in \ofW_m$ and $\oa (p)=\ow$.
\end{proof}

If $p \in \ofW$ then $\conv(O,t(p)(\ft))=\conv(O,p(m_1+1,m_2+1))$ and we introduce the notation
\begin{equation*}
\Omega(p)=\Omega(p(m_1+1,m_2+1)) = \left \{ \omega \in \Omega ; t(p) \subset [O,\omega) \right \}
=\left \{ \omega \in \Omega ; p(\fp_m) \subset [O,\omega) \right \}.
\end{equation*}

Let $i\in \fT$, that is, suppose that $i :\ft \to\cB$ is a type
rotating isometry. Let
\begin{equation*}
\Omega(i)=\left \{ \omega \in \Omega ; i(\ft) \subset [i(0,0),\omega) \right \},
\end{equation*}
those boundary points represented by sectors which originate at $i(0,0)$ and contain $i(\ft)$.
Clearly  $\Omega(\g i)=\g\Omega(i)$. For $p \in \ofW_m$ we have $\Omega(p)=\Omega(t(p))$.
Indeed, any sector originating at $t(p)(0,0)$ and containing $t(p)(\ft)$ extends to a sector originating
at $O$ and containing $p(\fp_m)$.

Fix $\ow_1, \ow_2 \in \oW$ with $t(\ow_1)=t(\ow_2)=a \in A$. Let
$p_1=\oa^{-1}(\ow_1)$ and $p_2=\oa^{-1}(\ow_2)$.
Let $\g \in \G$ be the unique element such that $\g t(p_1)=t(p_2)$.
Define a homomorphism $\phi: \cA \to C(\Om)\rtimes \G$ by
\begin{equation}\label{isomorphism}
\phi(s_{\ow_2,\ow_1})=\g \Ind _{\Om(p_1)}= \Ind _{\Om(p_2)}\g.
\end{equation}

Note that by Lemma~\ref{6} this does indeed define a *-homomorphism of
$\cA$ because the operators of the form $\phi(s_{\ow_2,\ow_1})$
are easily seen to satisfy the relations (\ref{rel1}).  We now prove
that $\phi$ is an isomorphism from $\cA$ onto $C(\Om)\rtimes \G$
(Theorem~\ref{main2} below).  For this some preliminaries are necessary.

\begin{lemma} \label{c1}
For any $m \in \ZZ^2_+$, $\Ind = \displaystyle \sum_{p \in \ofW_m} \Ind _ {\Omega(p)}$.
\end{lemma}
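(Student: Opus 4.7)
The plan is to show that for each $\omega \in \Omega$ there is exactly one $p \in \ofW_m$ with $\omega \in \Omega(p)$; the desired identity $\Ind = \sum_{p \in \ofW_m} \Ind_{\Omega(p)}$ then holds pointwise on $\Omega$, proving the lemma.

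For existence, fix $\omega \in \Omega$ and let $[O,\omega)$ be the unique sector in the class $\omega$ based at $O$. By standard building theory, $[O,\omega)$ is type-preservingly isomorphic to the model sector in $\fa$ based at the origin, which contains $\fp_m$ by our choice of coordinatization. Restricting this isomorphism to the subcomplex $\fp_m$ yields a type-rotating isometry $p : \fp_m \to \cB$ with $p(0,0)=O$ and $p(\fp_m) \subset [O,\omega)$, so $p \in \ofW_m$ and $\omega \in \Omega(p)$.

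For uniqueness, suppose $\omega \in \Omega(p_1) \cap \Omega(p_2)$ with $p_1,p_2 \in \ofW_m$. Both images $p_i(\fp_m)$ are sub-parallelograms of shape $m$ in the sector $[O,\omega)$ with corner vertex $O$. Because $p_i(0,0)=O$ has type $0$ and the only type rotation of $\wt A_2$ fixing $0$ is the identity, each $p_i$ is in fact type preserving. The two edges of $\fp_m$ emanating from $(0,0)$ along the coordinate axes have distinct types, so they must map to the two bounding sector panels of $[O,\omega)$ in the unique type-respecting way. Hence $p_1(\fp_m) = p_2(\fp_m)$, and since a simplicial isomorphism is determined by its image together with its effect on one vertex, $p_1=p_2$.

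The crux is the uniqueness argument: \emph{a priori} a type-rotating isometry of $\fp_m$ into the sector could appear to admit a second embedding by swapping the two coordinate directions, but this is ruled out because fixing the type-$0$ corner forces $p_i$ to be type preserving, after which the two coordinate axes of $\fp_m$ are distinguished from each other by the types of their edges.
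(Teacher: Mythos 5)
Your argument is correct, and it is more self-contained than what the paper offers: the paper disposes of this lemma with a one-line citation to \cite[Section 2]{cms}, where it is established that the sets $\Omega(v)$, for $v$ ranging over the vertices in a fixed position relative to $O$, partition $\Omega$. What you have written out is essentially the content of that citation, specialised to the present setup: each $\omega$ determines the unique sector $[O,\omega)$ based at $O$, which is type-preservingly isomorphic to the model sector and therefore contains exactly one embedded copy of $\fp_m$ with corner at $O$; this gives both existence and uniqueness of the $p\in\ofW_m$ with $\omega\in\Omega(p)$, so the $\Omega(p)$ partition $\Omega$ and the indicator identity holds pointwise. Your identification of the crux --- that sending the type-$0$ vertex $(0,0)$ to the type-$0$ vertex $O$ forces each $p_i$ to be type preserving, after which the two coordinate directions are distinguished by the distinct types of $(1,0)$ and $(0,1)$ --- is exactly right, and it is what rules out the apparent ambiguity of swapping the two panels. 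The one step you assert rather than justify is that the two edges of $\fp_m$ at $(0,0)$ must land on the two bounding panels of $[O,\omega)$ at all: this holds because the base vertex of a sector lies in exactly one chamber of that sector, hence has exactly two neighbours inside $[O,\omega)$, namely the initial vertices of the two panels (of types $1$ and $2$ in some order); once $p_1$ and $p_2$ are forced to agree on that initial chamber, they agree on all of $\fp_m$, since an isometry of the convex set $\fp_m$ into an apartment is determined by its restriction to a single chamber. With that small point made explicit, your proof is complete.
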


\begin{proof}
This follows from the discussion in \cite[Section 2]{cms}.
\end{proof}

\begin{lemma} \label{c2}
The linear span of $\{ \Ind _ {\Omega(p)} ; p \in \ofW \}$ is dense in $C(\Om)$.
\end{lemma}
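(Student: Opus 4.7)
The plan is to apply the Stone--Weierstrass theorem to the real-linear span $\mathcal{S}$ of $\{\Ind_{\Omega(p)} : p \in \ofW\}$, a collection of real-valued continuous functions on the compact Hausdorff space $\Omega$. To that end I must check (a) that $\mathcal{S}$ contains the constants, (b) that $\mathcal{S}$ is closed under multiplication, and (c) that $\mathcal{S}$ separates points of $\Omega$. Condition (a) is immediate from Lemma~\ref{c1} applied with $m=0$.

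For (b), the key observation is that, for $p\in\ofW_m$ and any $n\ge m$,
$$\Omega(p) \;=\; \bigsqcup_{\substack{q\in\ofW_n\\ q|_{\fp_m}=p}}\Omega(q),$$
obtained by sending each $\omega\in\Omega(p)$ to the unique $q\in\ofW_n$ whose image is the shape-$n$ parallelogram based at $O$ lying inside $[O,\omega)$. Local finiteness of $\cB$ makes this a finite disjoint union, the disjointness resting on the fact that a type-rotating isometry of a model parallelogram into an apartment, pinned at the type-$0$ vertex $(0,0)\mapsto O$, is uniquely determined by its image. Applied to $p_1\in\ofW_{m_1}$ and $p_2\in\ofW_{m_2}$ with $n=m_1\vee m_2$, this yields
$$\Ind_{\Omega(p_1)}\Ind_{\Omega(p_2)}\;=\;\Ind_{\Omega(p_1)\cap\Omega(p_2)}\;=\;\sum_{\substack{q\in\ofW_n\\ q|_{\fp_{m_1}}=p_1,\ q|_{\fp_{m_2}}=p_2}}\Ind_{\Omega(q)}\;\in\;\mathcal{S}.$$

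For (c), given distinct $\omega_1,\omega_2\in\Omega$, use the base $\{\Omega(v)\}$ of the topology (recalled in the paragraph introducing $\Omega$) to produce a vertex $v\ne O$ with $\omega_1\in\Omega(v)$ and $\omega_2\notin\Omega(v)$. Since $v\in[O,\omega_1)$, for some $m$ the parallelogram of shape $m$ based at $O$ inside the sector $[O,\omega_1)$ contains $v$; that parallelogram is the image of a unique $p\in\ofW_m$, so $\Omega(p)\subseteq\Omega(v)$, and hence $\omega_1\in\Omega(p)$ while $\omega_2\notin\Omega(p)$. Stone--Weierstrass then yields density of $\mathcal{S}$ in $C(\Omega)$. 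The main hurdle is step (b), and in particular the disjointness of the above decomposition; once that is granted, the rest is routine.
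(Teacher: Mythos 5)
Your proof is correct, but it is more self-contained than the paper's, which simply cites \cite[Section 2]{cms} for the fact that the sets $\Omega(p)$, $p\in\ofW$, form a basis for the topology of the totally disconnected compact Hausdorff space $\Om$; from that, density of the span of the corresponding indicators is the standard fact about clopen bases. You instead run Stone--Weierstrass directly, and the one genuinely nontrivial point you must supply is closure under multiplication, which you get from the refinement identity $\Omega(p)=\bigsqcup_q \Omega(q)$ over the $q\in\ofW_n$ extending $p$ ($n\ge m$). That identity is the topological counterpart of Lemma~\ref{c1} (and of Lemma~\ref{bc} on the algebra side), and its disjointness does rest on exactly the uniqueness you cite: a type rotating isometry of $\fp_n$ into $\cB$ sending $(0,0)$ to the type-$0$ vertex $O$ must preserve types (the only cyclic permutation of $\{0,1,2\}$ fixing $0$ is the identity) and is therefore determined by its image --- the same uniqueness the paper invokes in Corollary~\ref{9B}. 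Your separation-of-points step correctly reduces to the basis property of the $\Omega(v)$ together with Hausdorffness, both recorded in the paper. What your route buys is independence from the unstated details of \cite{cms}; what it costs is redoing, inside the proof, the combinatorial bookkeeping (unique shape-$n$ parallelogram based at $O$ inside a sector) that the paper has already set up elsewhere. Either way the lemma stands.
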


\begin{proof}
This follows because the sets $\Omega(p)$ for  $p \in \ofW$ form a basis for the topology
of $\Om$ \cite[Section 2]{cms}.
\end{proof}

\begin{lemma}\label{c3}
Let $p \in \fP_m$ where $m = (m_1,m_2) \in \ZZ^2_+$.
Let $x = p(0,0)$ and $y= p(m_1+1,m_2+1)$. Let $y'$ be another vertex of $\cB$
whose graph distance to $y$ in the $1$-skeleton of $\cB$ equals $n$. Suppose that
$n \le m_1, m_2$. Then $\conv(x,y')$ contains
$p(\fp_{(m_1-n,m_2-n)})$.
\end{lemma}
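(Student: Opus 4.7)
The plan is to retract the building onto an apartment $\Sigma$ containing $p(\fp_m)$, locate the image $y''$ of $y'$ in sector coordinates, and then use a second apartment containing both $y'$ and the sector housing $p(\fp_m)$ to transfer the desired containment.

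Fix an apartment $\Sigma$ containing $\conv(x,y) = p(\fp_m)$, and let $\omega_0 \in \Om$ be the class of the sector $[x,\omega_0) \subseteq \Sigma$ which contains $p(\fp_m)$. Introduce sector coordinates on $\Sigma$ with origin $x$ and axes $e_1, e_2$ along this sector, so that $y = x + (m_1+1)e_1 + (m_2+1)e_2$ and the far corner $z := p(m_1-n+1, m_2-n+1) = x + (m_1-n+1)e_1 + (m_2-n+1)e_2$ of the target sub-parallelogram satisfies $p(\fp_{(m_1-n,m_2-n)}) = \conv(x,z) \subseteq [x,\omega_0)$. By the building axioms, choose an apartment $\Sigma'$ containing both the sector $[x,\omega_0)$ and a chamber incident to $y'$. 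The retraction $\rho : \cB \to \Sigma$ centered at $\omega_0$ is the identity on $\Sigma$ and restricts to $\Sigma'$ as a type-preserving isomorphism onto $\Sigma$ which fixes $[x,\omega_0) \subseteq \Sigma \cap \Sigma'$ pointwise; set $y'' := \rho(y')$.

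Since $\rho$ is simplicial, it does not increase graph distance, so $d_\Sigma(y,y'') \le d_\cB(y,y') = n$. In $\Sigma$, the six graph-neighbors of any vertex sit at sector-coordinate offsets $\pm(1,0), \pm(0,1), \pm(1,-1)$, so each edge shifts each sector coordinate by at most one. Thus $y'' = x + y''_1 e_1 + y''_2 e_2$ with $y''_i \ge m_i + 1 - n \ge 1$ (using $n \le m_i$), placing $y''$ in the sector $[x,\omega_0)$. The parallelogram $\conv_\Sigma(x,y'')$ accordingly contains $z$ and hence contains $\conv_\Sigma(x,z) = p(\fp_{(m_1-n,m_2-n)})$. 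Transferring through the isomorphism $\rho|_{\Sigma'}$ --- which fixes $[x,\omega_0)$ (hence the whole sub-parallelogram $\conv(x,z)$) and sends $y'$ to $y''$ --- yields $p(\fp_{(m_1-n,m_2-n)}) \subseteq \conv_{\Sigma'}(x,y')$. Since $\Sigma'$ contains $x$ and $y'$, $\conv(x,y') = \conv_{\Sigma'}(x,y')$, giving the desired inclusion.

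The main technical obstacle is invoking the existence of the apartment $\Sigma'$ and the isomorphism-on-apartments property of the retraction from a sector class --- both standard for thick affine buildings. Given these tools, the proof reduces to the elementary coordinate bound $y''_i \ge m_i+1-n$, made possible precisely by the hypothesis $n \le m_1, m_2$.
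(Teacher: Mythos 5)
Your reduction to Euclidean geometry in a single apartment via the retraction $\rho_{\Sigma,\omega_0}$ is a reasonable instinct, and the coordinate bookkeeping (each edge of the $1$-skeleton shifts each sector coordinate by at most one, hence $y''_i\ge m_i+1-n\ge 1$) is correct and is essentially the same Euclidean observation that drives the paper's argument. But the step ``choose an apartment $\Sigma'$ containing both the sector $[x,\omega_0)$ and a chamber incident to $y'$'' is not a building axiom and is in general false; this is where the proof breaks. Already for $n=1$: the vertex $y=p(m_1+1,m_2+1)$ has both sector coordinates at least $2$, so its entire star in $\Sigma$ lies inside $[x,\omega_0)$; consequently any apartment containing the full sector contains exactly the six neighbours of $y$ that already lie in the sector. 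Since $\cB$ is thick, $y$ has further neighbours $y'$ outside the sector, and for such $y'$ \emph{no} apartment contains both $[x,\omega_0)$ and $y'$. What the axioms do give is an apartment $\Sigma'$ containing a chamber at $y'$ and a \emph{subsector} of $[x,\omega_0)$; but then $\rho|_{\Sigma'}$ fixes only $\Sigma\cap\Sigma'$, which need not contain $x$ or $\conv(x,z)$, so the transfer step fails and $\conv_{\Sigma'}(x,y')$ may not even be defined because $x$ need not lie in $\Sigma'$. Your argument as written establishes the desired containment only for the retracted vertex $y''=\rho(y')$, not for $y'$.

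The paper sidesteps this by inducting on $n$ and handling only $n=1$ directly: for a single edge $\{y,y'\}$ there \emph{is} an apartment containing the simplex $\{y,y'\}$ and the vertex $x$ (any two simplices of a building lie in a common apartment), and by convexity that apartment also contains $\conv(x,y)=p(\fp_m)$, so the whole configuration sits in one Euclidean apartment and one checks the six possible positions of $y'$ relative to $y$. To salvage your approach you would need either to supply that induction (after which the retraction is superfluous) or to prove a genuinely nontrivial comparison between $\conv(x,y')$ and $\conv(x,\rho(y'))$.
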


\begin{proof}
Induction reduces us to the case $n=1$. There is some apartment
containing~$x$ and the edge from~$y$ to~$y'$. In Figure~\ref{c3'} we
show one of the six possible positions for~$y'$.

\refstepcounter{picture}
\begin{figure}[htbp]
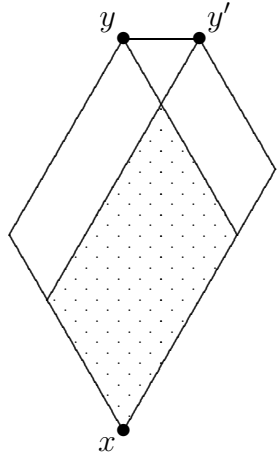
\label{c3'}
\hfil
\centerline{
\beginpicture
\setcoordinatesystem units <0.5cm,0.866cm> point at -4 1   % sets scale
\setplotarea x from -3 to 3, y from -2.5 to 4.5         % sets plot size up
\setsolid
\put {$\bullet$} at -1 4
\put {$\bullet$} at 1 4
\put {$\bullet$} at -1 -2
\put {$y$} [r,b] at -1.2 4.1
\put {$y'$} [l,b] at 1.2 4.1
\put {$x$} [r,t] at -1.2 -2.1
\putrule from -1 4 to 1 4
\plot -1  -2    -3  0   1  4     3 2   -1 -2   /
\plot -3 0   -4 1   -1 4   2 1  /
\setshadegrid span <4pt>
\vshade -3 0 0 <,z,,>  -1 -2 2  <z,z,,>  0 -1 3 <z,,,>  2 1 1  /
\endpicture
}
\hfil
\caption{Relative positions of $x$,$y$ and $y'$ in an apartment.}
\end{figure}

Now $\conv(x,y')$ is the image of some $p' \in \fP_{(m_1+1,m_2-1)}$.
It is evident in this case (and in the other five cases) that
$p'(\fp_{(m_1-1,m_2-1)})=p(\fp_{(m_1-1,m_2-1)})$.
\end{proof}

\begin{corollary}\label{9B}
Let $p \in \fP_m$ for $m=(m_1,m_2)$. Let $x=p(m_1+1,m_2+1)$ and $y=p(0,0)$.
Let $y'$ be a third vertex of $\cB$ at distance $n$ to $y$.
Suppose $n \le m_1,m_2$. Then $\conv(x,y')$ is the image of some $p' \in \fP$,
where $p'(0,0)=y'$ and $t(p')=t(p)$.
\end{corollary}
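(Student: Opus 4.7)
The plan is to mimic the proof of Lemma~\ref{c3} with the roles of the two acute corners of $\fp_m$ interchanged. First I reduce to $n=1$ by induction on $n$, so $y'$ is a neighbour of $y=p(0,0)$ in the $1$-skeleton and $m_1,m_2\ge 1$. By the building axiom on pairs of simplices there is an apartment $\Sigma^{*}$ of $\cB$ containing $x$ and the edge $\{y,y'\}$; since $\Sigma^{*}$ also contains $y$, and convex hulls in a building do not depend on the choice of apartment used to compute them, $p(\fp_m)=\conv(y,x)\subseteq\Sigma^{*}$.

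Next I set up coordinates in $\Sigma^{*}$ so that $y=(0,0)$, $x=(m_1+1,m_2+1)$ and $p(\fp_m)$ is the standard parallelogram of shape $m$. The neighbour $y'$ is then one of the six vertices $(\pm 1,0)$, $(0,\pm 1)$, $(1,-1)$, $(-1,1)$. A direct six-case inspection, strictly parallel to the case analysis behind Figure~\ref{c3'}, shows that $\conv(y',x)\subseteq\Sigma^{*}$ is again a non-degenerate parallelogram of some shape $m'\in\ZZ_{+}^{2}$, with $y'$ and $x$ at opposite acute corners; the possible values of $m'$ are $(m_1\pm 1,m_2)$, $(m_1,m_2\pm 1)$, $(m_1-1,m_2+1)$, $(m_1+1,m_2-1)$, and admissibility ($m'\ge 0$) in each case is exactly where the hypothesis $m_1,m_2\ge 1$ is used. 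Crucially, in every case the rhombus at the corner $x$ of $\conv(y',x)$ has vertex set $\{p(m_1,m_2),\,p(m_1+1,m_2),\,p(m_1,m_2+1),\,x\}$ and so coincides with $t(p)(\ft)$.

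Defining $p'$ to be the unique type-rotating isometry from $\fp_{m'}$ onto $\conv(y',x)$ with $p'(0,0)=y'$ then yields $p'\in\fP$ with $p'(0,0)=y'$ and $t(p')=t(p)$, completing the proof. The only real work is the six-case verification of the previous paragraph; the argument is routine and essentially pictorial, but it is the place where the geometric hypothesis $n\le m_1,m_2$ does its job, in each case guaranteeing that the sector germ at $x$ used to extract the terminal tile is unchanged when $y$ is replaced by $y'$.
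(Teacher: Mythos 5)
Your argument is correct and follows essentially the same route as the paper: the paper's proof simply invokes Lemma~\ref{c3} (with the roles of the two acute corners of the parallelogram interchanged) to conclude that $\conv(x,y')$ contains $t(p)(\ft)$, then observes that $\conv(x,y')$, containing a chamber, is the image of a unique $p'\in\fP$ based at $y'$, which forces $t(p')=t(p)$. Your reduction to $n=1$, passage to an apartment containing $x$ and the edge $\{y,y'\}$, and six-case computation of the shape of $\conv(y',x)$ are precisely the content of Lemma~\ref{c3}'s proof transported to the opposite corner, so you have re-derived that lemma rather than cited it; the conclusion and the role of the hypothesis $n\le m_1,m_2$ match the paper's.
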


\refstepcounter{picture}
\begin{figure}[htbp]\label{9B'}
\hfil
\centerline{
\beginpicture
\setcoordinatesystem units <0.5cm,0.866cm>    % sets scale
\setplotarea x from -6 to 6, y from -3 to 4.3         % sets plot size up
\put{$y$}  [t] at  1  -2.2
\put{$x$}  [l] at  -0.8  4.1
\put{$y'$}  [t] at  -2.5  -1.7
\put{$t(p)(\ft)$}  [r] at  -1.6 3.6
\setlinear
\plot 1  -2    3  0   -1  4     -3 2   1 -2   /
\plot -3 2    -4.5 0.5   -2.5 -1.5    1 2   /
\plot -1.5 3.5  -1 3  -0.5 3.5 /
\endpicture
}
\hfil
\caption{}
\end{figure}

\begin{proof}
See Figure~\ref{9B'}. According to Lemma~\ref{c3}, the convex hull
$\conv(x,y')$ contains $t(p)(\ft)$.  Because $\conv(x,y')$
contains a chamber, it must be the image of a unique $p' \in \fP$ with
$p'(0,0)=y'$. Since $t(p)(\ft)$ lies in the image of $p'$, namely
$\conv(x,y')$, and $t(p)(1,1)=x$, it follows that $t(p')~=~t(p)$.
\end{proof}

\begin{theorem}\label{main2}
The map $\phi$ is an isomorphism from $\cA$ onto $C(\Om)\rtimes \G$.
\end{theorem}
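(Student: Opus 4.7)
The proof plan has three parts: (i) note that $\phi$ is a well-defined $*$-homomorphism, (ii) show injectivity via simplicity, and (iii) show surjectivity by placing a generating set of $C(\Om)\rtimes\G$ inside $\phi(\cA)$.

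For (i), the preceding text already observes that the operators $\phi(s_{\ow_2,\ow_1})=\g\Ind_{\Om(p_1)}$ satisfy relations (\ref{rel1}), so Lemma~\ref{6} yields the $*$-homomorphism. For (ii), I would observe that $\phi$ is nonzero: for any $d\in D$, viewed as an element of $\ofW_0$, formula (\ref{isomorphism}) with $\g=e$ gives $\phi(s_{\oa(d),\oa(d)})=\Ind_{\Omega(d)}$, and $\Omega(d)$ is nonempty since any tile based at $O$ extends to a sector based at $O$. Simplicity of $\cA$ (Theorem~\ref{main1}) then forces $\phi$ to be an isometry, and in particular $\phi(\cA)$ is closed in $C(\Om)\rtimes\G$.

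For (iii), taking $\g=e$ and $p_1=p_2=p$ in (\ref{isomorphism}) shows $\Ind_{\Omega(p)}\in \phi(\cA)$ for every $p\in\ofW$, and by Lemma~\ref{c2} the closed linear span of these is $C(\Omega)$; hence $C(\Omega)\subseteq \phi(\cA)$. The main work is to place each unitary $u_\g$ ($\g\in\G$) inside $\phi(\cA)$. Fix $\g\in\G$ and choose $m=(m_1,m_2)\in\ZZ_+^2$ with $m_1,m_2\ge d(O,\g O)$. By Lemma~\ref{c1},
\[
u_\g \;=\; u_\g\cdot\Ind_\Omega \;=\; \sum_{q\in\ofW_m} u_\g\,\Ind_{\Omega(q)},
\]
a finite sum, since $\cB$ is locally finite. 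For each $q$ I would apply Corollary~\ref{9B} to the translated parallelogram $\g q$ with $y=\g O$, $x=\g q(m_1+1,m_2+1)$ and third vertex $y'=O$: the hypothesis $d(y,y')\le m_1,m_2$ holds by the choice of $m$, and the corollary produces $p_2\in\ofW$ with $t(p_2)=t(\g q)=\g t(q)$. Formula (\ref{isomorphism}) then identifies $u_\g\,\Ind_{\Omega(q)}=\phi(s_{\oa(p_2),\oa(q)})\in \phi(\cA)$, so $u_\g\in\phi(\cA)$.

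Having shown that the closed $*$-subalgebra $\phi(\cA)$ contains both $C(\Omega)$ and $\{u_\g:\g\in\G\}$, it contains the dense $*$-subalgebra these generate and therefore equals $C(\Om)\rtimes\G$. The main obstacle is precisely the step realising $u_\g$ in $\phi(\cA)$: formula (\ref{isomorphism}) directly yields only those $\g\Ind_{\Omega(p_1)}$ for which $\g t(p_1)$ is the terminus of some parallelogram based at $O$, which is not automatic for a given $p_1$. The combination of Lemma~\ref{c1} (to decompose at a scale dictated by $\g$) with the geometric extension lemma Corollary~\ref{9B} (to replace the base $\g O$ by $O$ without changing the terminal tile) is exactly the device needed to overcome this.
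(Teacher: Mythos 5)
Your proposal is correct and follows essentially the same route as the paper's proof: injectivity from simplicity, $C(\Omega)\subseteq\phi(\cA)$ via Lemma~\ref{c2}, and the decomposition $u_\g=\sum_{q\in\ofW_m}u_\g\Ind_{\Omega(q)}$ from Lemma~\ref{c1} combined with Corollary~\ref{9B} to realise each summand as $\phi(s_{\oa(p_2),\oa(q)})$. The only (immaterial) difference is that you apply Corollary~\ref{9B} to the translated parallelogram $\g q$ with third vertex $O$, whereas the paper applies it to $q$ with third vertex $\g^{-1}O$ and then translates the resulting $p''$ by $\g$.
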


\begin{proof} Since $\cA$ is simple, $\phi$ is injective.
If $p \in \oW$ then $\Ind _{\Omega(p)}=\phi(s_{\ow,\ow})$, where $\ow=\oa(p)$ and so
Lemma~\ref{c2} shows that the range of $\phi$ contains $C(\Om)$.
It remains to show that it contains $\G$.
Fix $\g \in \G$. Choose $m=(m_1,m_2)$ so that $d(O,\g^{-1}O) \le m_1, m_2$.  Now
$\g = \g \Ind = \displaystyle \sum_{p \in \ofW_m} \g \Ind _ {\Omega(p)}$.
For $p \in \ofW_m$ we claim that $\g t(p) = t(p')$ for some $p' \in \ofW$. Hence
$\g \Ind _ {\Omega(p)} = \phi \left(s_{\alpha(p'),\alpha(p)}\right)$.
This shows the range of $\phi$ contains $\G$ and hence is surjective.

To prove the claim above, apply Corollary~\ref{9B}
 to find $p'' \in \fP$ with $p''(0,0)= \g^{-1}(O)$
and $t(p'')=t(p)$. Let $p' = \g p''$. Then $p'(0,0)=O$, so $p' \in \ofW$
and $t(p')=\g t(p'')=\g t(p)$.
\end{proof}

\begin{remark}
It follows from Theorem~\ref{main2} and Remark~\ref{previous} that
$C(\Om)\rtimes \G$ is simple, nuclear and purely infinite. Simplicity
and nuclearity had previously been proved in \cite{rs} under the
additional assumption that $\G$ acts transitively and in a type
rotating manner on the vertices of $\cB$. From simplicity it follows
that $C(\Om)\rtimes\G$ is isomorphic to the reduced crossed product
$C(\Om)\rtimes_r \G$.  See also \cite {an,qs} for general conditions
under which the full and reduced crossed products coincide.  Their
results apply, for example when $\G$ is a lattice in a linear group.
\end{remark}

\subsection{Conditions {\rm (H0)-(H3)} for affine buildings of type
$\wt A_2$}\label{conditionsH0-H4}

Continue the notation and terminology used above, assuming throughout that
$\G$~acts on~$\cB$ via type rotating automorphisms.  We prove that
conditions (H0), (H1), and (H3) are satisfied so long as $\G$~acts
freely and with finitely many orbits on the vertices of~$\cB$.
Moreover, if $\cB$ is the building of $G=\PGL_3(\KK)$, where $\KK$ is
a nonarchimedean local field of characteristic zero and $\G$ is a
lattice in $\PGL_3(\KK)$ we prove that (H2) holds as well.  There are
several concrete examples in \cite{cmsz} where all these hypotheses
are satisfied.

\begin{proposition}\label{M_1M_2} Suppose that $\G$~acts freely and
with finitely many orbits on the vertices of~$\cB$.  Then the matrices
$M_1$, $M_2$ of the previous section satisfy conditions
{\rm(H0)},{\rm(H1)}, and {\rm(H3)}.
\end{proposition}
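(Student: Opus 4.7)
The plan is to verify (H0), (H1), and (H3) in turn. Condition (H0) is immediate: $M_i$ takes entries in $\{0,1\}$ by construction, and each $M_i$ is nonzero because any apartment of $\cB$ contains shape-$e_i$ parallelograms, giving at least one pair $a,b\in A$ with $M_i(b,a)=1$. For (H1) I would invoke Lemma~\ref{3}: since $r=2$, condition (H1c) is vacuous, so it suffices to check (H1a) and (H1b). Both reduce to a single geometric fact: $(M_1M_2)(c,a)$ and $(M_2M_1)(c,a)$ each count $\G$-classes of intermediate tiles $b$ completing a shape-$(1,1)$ parallelogram from $i_a$ to $i_c$ as an up-then-right (resp.\ right-then-up) L-shape. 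Since the $(1,1)$-parallelogram with specified corner tiles is unique, being the convex hull $\conv(i_a(0,0), i_c(1,1))$, and each of its two interior tiles is uniquely determined, both entries equal the indicator that some shape-$(1,1)$ parallelogram from $i_a$ to $i_c$ exists. Freeness of $\G$ on vertices is what prevents $\G$-quotient multiplicities.

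For (H3) I plan to argue by contradiction: assume that for some $p\ne 0$ every $w\in W$ is $p$-periodic. Lift to a parallelogram $P\in\ofW_m$ with $m$ comfortably larger than $|p|$; for each valid lattice position $l$, $p$-periodicity provides $\g_l\in\G$ sending the tile of $P$ at position $l$ to the tile at position $l+p$. Since adjacent tiles share vertices and $\G$ is free on vertices, $\g_l=\g_{l+e_j}$ whenever both sides are defined; induction collapses all $\g_l$ to a single $\g=\g(P)\in\G$ that translates every tile of $P$ by $p$. Any two parallelograms $P,P'$ through a common tile $i_0$ have their union contained in a finite subcomplex, hence in a common apartment and then in a larger containing parallelogram; restricting shows $\g(P)=\g(P')$, so there is a universal $\g\in\G$ associated to $i_0$ and $p$.

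The main obstacle I foresee is extracting the actual contradiction from this universal $\g$. The plan is to exploit thickness of $\cB$: choose two apartments $\fa,\fa'$ through $i_0(\ft)$ that agree on a neighborhood of $i_0$ but branch past some wall separating $i_0$ from the prospective target of $\g\cdot i_0$. Extending $i_0$ to shape-$m$ parallelograms $P\subset\fa$ and $P'\subset\fa'$ based at $i_0$, the universal $\g$ would have to send $i_0$ simultaneously to the tile of $P$ at position $l_0+p$ (a tile of $\fa$) and to the tile of $P'$ at position $l_0+p$ (a tile of $\fa'$); by the choice of apartments these are distinct tiles of $\cB$, the desired contradiction. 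The delicate point is the careful choice of $\fa,\fa'$ for a general $p\in\ZZ^2\setminus\{0\}$, which I would handle by first reducing via the $p\leftrightarrow -p$ symmetry of $p$-periodicity to a vector $p$ with a nonnegative component, and then exploiting thickness to branch in the appropriate direction.
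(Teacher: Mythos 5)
Your treatment of (H0) and (H1) is essentially the paper's: (H0) is immediate, and for (H1) the paper likewise reduces via Lemma~\ref{3} to (H1a) and (H1b) (with (H1c) vacuous for $r=2$), deriving these from the unique completion of an L-shaped minimal gallery of tiles to a $(1,1)$-parallelogram, with freeness governing the passage from actual tiles to $\G$-classes. The problem is your argument for (H3). The step ``any two parallelograms $P,P'$ through a common tile have their union contained in a finite subcomplex, hence in a common apartment'' is false in a thick building: apartments are thin, so a panel is a face of exactly two chambers of any apartment, whereas thickness puts it in at least three chambers of $\cB$; already the union of one tile with two distinct tiles attached to it along the same edge lies in no apartment. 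Indeed your own final paragraph constructs two parallelograms through $i_0$ that branch, i.e.\ that are \emph{not} contained in a common apartment, so the argument is internally inconsistent. Without that step there is no universal $\g$: you get one element $\g(P)$ per parallelogram, and the hypothesis that every word is $p$-periodic says only that each $\g(P)$ carries $i_0$ to \emph{some} tile of the orbit $\G i_0$. Two branched parallelograms then produce two distinct target tiles in that one orbit and two distinct elements $\g(P)\ne\g(P')$ --- which is no contradiction at all.

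The repair, and the paper's actual proof, is to abandon the contradiction scheme and construct a non-$p$-periodic word directly. Choose $m$ so that $\fp_m$ contains a minimal gallery of chambers $(C_0,\dots,C_l)$ with $C_l$ the $p$-translate of $C_0$, and build an isometry into $\cB$ chamber by chamber along the gallery. Thickness gives at least two choices at the final step, and freeness shows that distinct choices of the image of $C_l$ (all agreeing on the image of the panel $C_{l-1}\cap C_l$) lie in distinct $\G$-orbits as marked chambers; hence at most one choice makes the image of $C_l$ $\G$-equivalent, under the translation identification with $C_0$, to the image of $C_0$, and one simply picks another. Extending the isometry to all of $\fp_m$ yields a word that fails $p$-periodicity at a single pair of positions. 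This is precisely the observation your last paragraph needs but never makes: of your two branched tiles at position $l_0+p$, at most one can be $\G$-equivalent as a marked tile to $i_0$, so at least one of $P,P'$ already gives a non-$p$-periodic word, with no universal $\g$ required.
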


\begin{proof}
By definition $M_1$ and~$M_2$ are $\{0,1\}$-matrices.  To say that
they are nonzero is to say that $\fP_{(1,0)}$ and~$\fP_{(0,1)}$ are
nonempty, which the are.  This proves~(H0).

Fix any nonzero $j\in\ZZ^2$.  We will construct $w\in\fW$ which is not
$j$-periodic.  Choose $m\in\ZZ^2$ large enough so that inside $\fp_m$
one can find a minimal gallery of chambers, $(C_0,C_1,\dots,C_l)$ so
that $C_l$~is the $j$-translate of~$C_0$.  Write $\tau:C_0\to C_l$ for
the identification by translation of the two chambers.

Construct an isometry~$p$ from this minimal gallery to~$\cB$ by
defining successively $p|_{C_0}$, $p|_{C_1}$, etc.  Since the building
is thick, one has at least two choices at each step.  Once
$p|_{C_{l-1}}$~is fixed, no two of the choices for $p|_{C_l}$ can be in
the same $\G$-orbit, since $\G$ acts freely on the vertices of~$\cB$.
Therefore, one may choose~$p$ so that $p|_{C_0}$ and $p\circ\tau$ are
in different $\G$-orbits.  Now extend $p$ to an isometry
$p:\fp_m\to\cB$.  The element of~$W$ associated to~$p$, that is
$\alpha(\Gamma p)$, is not $j$-periodic.  This proves~(H3).

Condition~(H1c) is vacuous for~$r=2$.  Consider the configuration of
Figure~\ref{unique_extension}.  Given the tiles $a$, $b$, and~$c$,
there is exactly one tile~$d$ which completes the picture.  Since $a$,
$b$, and~$c$ make up a minimal gallery, this follows by the same
argument used in proving Lemma~\ref{frWm}.  Translating this fact to
matrix terms, we have that if $(M_2M_1)(c,a)>0$ then
$(M_1M_2)(c,a)=1$.  Likewise, if $(M_1M_2)(c,a)>0$, then
$(M_2M_1)(c,a)=1$.  Conditions~(H1a) and~(H1b) follow, and by
Lemma~\ref{3}, so does condition~(H1).
\end{proof}

\refstepcounter{picture}
\begin{figure}[htbp]
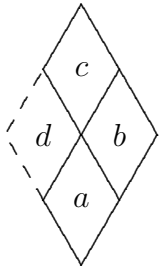
\label{unique_extension}
\hfil
\centerline{
\beginpicture
\setcoordinatesystem units <0.5cm,0.866cm>  % sets scale
\setplotarea x from -5 to 5, y from 0 to 4         % sets plot size up
\put{$a$}   at  0 1
\put{$b$}   at  1  2
\put{$c$}   at  0 3
\put{$d$}   at  -1  2
\setlinear \plot -1 1  0 0   2 2  0 4  -1 3  0 2  1 3 /
\plot -1 1  0 2  1 1 /
\setdashes \plot  -1 1  -2 2  -1 3 /
\endpicture
}
\hfil
\caption{Uniqueness of extension.}
\end{figure}

It is not much harder to prove~(H1) directly, bypassing conditions
(H1a)-(H1c).  It remains to prove condition~(H2).  The next result and its
corollary prove a strong version of condition~(H2).

\begin{theorem}\label{H(3)}
Let $\cB$ be the building of $G=\SL_3(\KK)$, where $\KK$ is a local
field of characteristic zero.  Let $\G$ be a lattice in $\SL_3(\KK)$
which acts freely on the vertices of $\cB$ with finitely many orbits.
Let the alphabet $A=\G\backslash \fT$ and the transition matrices
$M_1$,$M_2$ be defined as at the beginning of
Section~\ref{boundary-algebra}. Then for each $i=1,2$, the directed
graph with vertices $a \in A$ and directed edges $(a,b)$ whenever
$M_i(b,a) =1$ is irreducible.
\end{theorem}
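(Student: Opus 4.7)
The plan is to reduce irreducibility to the ergodic behaviour of a hyperbolic element of $G$ on $\G\backslash G$, then invoke the Howe--Moore mixing theorem. The group $G$ acts transitively on the tiles of a given rotation type; fixing a base tile $i_0$ with Iwahori stabiliser $I$, identify the corresponding $G$-orbit with $G/I$ and the matching portion of $A$ with $\G\backslash G/I$. Since the finite-orbit hypothesis forces $\G$ to be a cocompact lattice in $G$, the quotient $\G\backslash G$ carries a $G$-invariant probability measure.

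Next, choose a dominant translation $\tau_1\in G$ in the $e_1$-direction, for instance $\tau_1=\operatorname{diag}(\pi^{-1},1,\pi)$ where $\pi$ is a uniformizer of $\KK$. By Bruhat--Tits theory the set of $e_1$-neighbours of $gI\in G/I$ is exactly the image of $gI\tau_1 I$ in $G/I$, so that $M_1(b,a)=1$ iff $\G g_b I\cap \G g_a I\tau_1 I\ne\emptyset$ for representatives $g_a\in a$, $g_b\in b$. Because $\tau_1$ is a dominant cocharacter, length-additivity of translations produces the Iwahori--Hecke identity $(I\tau_1 I)^n=I\tau_1^n I$ in $G$ for all $n\ge 1$, and hence $M_1^n(b,a)>0$ iff $\G g_b I\cap \G g_a I\tau_1^n I\ne\emptyset$.

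Finally, since $\tau_1$ is unbounded in the simple group $G$, the Howe--Moore theorem asserts that right translation by $\tau_1$ is strongly mixing on $\G\backslash G$. In particular, for any two open sets $U,V\subset\G\backslash G$ of positive Haar measure, $\mu(U\cap V\tau_1^{-n})\to\mu(U)\mu(V)>0$ as $n\to\infty$, so $U\cap V\tau_1^{-n}\ne\emptyset$ for all sufficiently large $n$. Applied with $U$ the image of $\G g_b I$ and $V$ the image of $\G g_a I$, this furnishes an $n$ with $M_1^n(b,a)>0$, proving irreducibility. The argument for $M_2$ is identical with a dominant $e_2$-translation $\tau_2$, and minor rotation-type bookkeeping handles the case of a lattice that permutes types. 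I expect the bulk of the work to lie in the group-theoretic dictionary---matching the combinatorial notion of $e_1$-neighbours to Iwahori double cosets and verifying the identity $(I\tau_1 I)^n=I\tau_1^n I$---rather than in the mixing step, which is a direct invocation of a classical theorem.
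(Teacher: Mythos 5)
Your overall strategy---encode the $e_1$-step relation as a double coset and then apply the mixing form of Howe--Moore to right translation on $\G\backslash G$---is a legitimate alternative to the paper's argument, which instead works with the space $\fS_0$ of type-preserving embeddings of a half-infinite strip, realizes its space of germs-at-infinity as $G/H$ with $H$ closed and noncompact, deduces ergodicity of the $\G$-action from Howe--Moore, and finishes with an equidistribution/monotone-convergence argument on $\G\backslash\fS_0$. Your route, if it worked, would be somewhat cleaner, since strong mixing of a single translation element is a more off-the-shelf statement than the equidistribution of $\G s_k$ used in the paper.

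There is, however, a concrete gap at the key step. The translation of an apartment by $e_1$ carries the base vertex of a tile to a vertex of a different type, so it is a \emph{type-rotating} isometry and is not induced by any element of $G=\SL_3(\KK)$ (which acts type-preservingly); the element $\tau_1=\operatorname{diag}(\pi^{-1},1,\pi)$ that you propose is the translation by $e_1+e_2$, and its powers govern words of shape $(n,n)$, i.e.\ $(M_2M_1)^n$, not $M_1^n$. Relatedly, the set $\fT$ of tiles splits into three $G$-orbits according to the type of $i(0,0)$, the $M_1$-graph is tripartite with respect to this splitting, and consequently your conclusion ``$M_1^n(b,a)>0$ for all sufficiently large $n$'' cannot hold: positivity forces $n$ to lie in a fixed residue class mod~$3$ determined by the types of $a$ and $b$. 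The single-homogeneous-space picture $A\cong\G\backslash G/I$ with powers of one double coset therefore breaks down as stated. The argument is repairable---use the type-preserving translation by $3e_1$ (e.g.\ $\operatorname{diag}(\pi^{-2},\pi,\pi)$ up to sign conventions), apply mixing to get $M_1^{3n}(b,a)>0$ for $a,b$ of equal type, and prepend one or two extra $M_1$-steps (which exist since $M_1$ has no zero rows or columns) to reach the other residues---but this is more than ``minor rotation-type bookkeeping.'' Two smaller points: the stabilizer $I$ of a tile is the intersection of the two Iwahori subgroups fixing its chambers, not an Iwahori, so $(I\tau I)^n=I\tau^nI$ is not the standard length-additivity identity; it should instead be deduced from transitivity of $G$ on embedded strips of length $n$, which follows from the same convexity/strong-isometry argument as in Lemma~\ref{frWm}. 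And your reduction from ``finitely many vertex orbits'' to cocompactness is correct but unnecessary: finite covolume suffices for the mixing statement.
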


\begin{proof}
We use an idea due to \textsc{S. Mozes} \cite[Proposition 3]{moz'}.
Fix a model half-infinite strip $\fs$ of tiles in $\fa$ based at $(0,0)$ and let $\fs_k= \fp_{(k,0)}$
 be the initial segment consisting of $k+1$ tiles.
\refstepcounter{picture}
\begin{figure}[htbp]\label{H(1)}
\hfil
\centerline{
\beginpicture
\setcoordinatesystem units <0.5cm,0.866cm>  % sets scale
\setplotarea x from -6 to 6, y from 0 to 1         % sets plot size up
\putrule from  -5 1 to  6 1
\putrule from  -6 0  to  5 0
\setlinear \plot  -5 1   -6 0 /
\plot -3 1  -4 0 /
\plot -1 1  -2 0 /
\plot  1 1   0 0 /
\plot  3 1   2 0 /
\plot  5 1   4 0 /
\endpicture
}
\hfil
\caption{The strip $\fs$.}
\end{figure}

Let $\fS_0$ [respectively $\fS_k$, $k\ge1$] be the set of type-preserving isometric embeddings $s$
of $\fs$ [respectively $\fs_k$] into the building $\cB$.
Thus $\fS_k$ is the subset of type-preserving maps in $\fP_{(k,0)}$ and if $s\in\fS_k$, then  $o(s)$ and
$t(s)$ are defined as before.
Also, if $s\in\fS$ define $o(s)=s\vert_{\ft}$ and let $s_k=t(s\vert_{\fs_k})$ for $k \ge 0$.

It is desired to prove that given $a,b \in A$,
there exists  $k \in \ZZ_+$ and $s \in \fS_k$ such that $a=\G o(s)$ and $b=\G t(s)$.
The group $G$ acts transitively on the set of apartments of $\cB$ \cite[Section 5]{st}. Moreover
the stabilizer of an apartment acts transitively on the set of sectors of the apartment.
It follows that $G$ acts transitively on $\fS_0$.
Therefore $\fS_0=G/H_0$ for some subgroup $H_0$.  In fact
$$
H_0=\{
      \begin{pmatrix} a_1 &  b_2 & b_3 \\ 0 & a_2 & c \\ 0 & d & a_3
      \end{pmatrix} \in G ;
         \text{ $|a_j|=1$, $|b_j|,|c|\leq 1$, and~$|d|< 1$}
    \}.
$$
Say that two elements of~$\fS_0$
are equivalent if, beyond a certain point dependent on the two
elements, they agree on all tiles of $\fs$.  Let $\fS$~be
the space of equivalence classes.  Since $G$~acts transitively
on~$\fS_0$, a fortiori it acts transitively on~$\fS$.  Thus
$\fS=G/H$ for some~$H$. In fact
$$
H=\{
    \begin{pmatrix} a_1 & b_2 & b_3 \\ 0 & a_2 & c \\ 0 & d & a_3
    \end{pmatrix} \in G ;
       \text{~$|a_j|=1$,~$|c|\leq 1$, and~$|d|< 1$}
  \}.
$$
The only relevant facts about~$H$ are that it is closed and
noncompact.  On~$\fS_0$ and~$\fS$ we put the topologies and
measures obtained from the isomorphisms with~$G/H_0$ and~$G/H$.

The Howe--Moore Theorem \cite[Theorems 10.1.4 and 2.2.6]{z} shows that
$\G$~acts \emph{ ergodically} on~$\fS$.  Suppose that there exist $a,b
\in A$ which cannot occur as $a=\G o(s)$ and $b=\G t(s)$ with $s\in
\fS_k$, for any $k$.  Let $\fS_0' = \{s\in\fS_0; \G o(s)=a\}$ and let
$\fS'$~be the projection of~$\fS_0'$ to~$\fS$.  The sets $\fS_0'$
and~$\fS'$ are clearly $\G$-invariant.  The set $\fS'$ is open, since
$\fS_0'$ is open.  Therefore $\fS'$ is not of null measure.  By the
ergodicity of the $\G$-action $\fS'$ has full measure.  Also of full
measure will be the inverse image of $\fS'$ in~$\fS_0$, which consists
of all those $s'\in\fS_0$ which are equivalent to some $s\in\fS_0$
with $\G o(s)=a$.  A fortiori
\begin{equation*}
\left\{
   s\in\fS_0 ; \text{there exists $K\ge 0$ such that $b\ne \G s_k$ for all $k \ge K$}
\right\}
\end{equation*}
is of full measure in~$\fS_0$.
Now, in~$\Gamma \backslash \fS_0$ the set
\begin{equation*}
\left\{
   \G s\in\G \backslash \fS_0 ; \text{there exists $K\ge 0$ such that $b\ne \G s_k$ for all $k \ge K$}
\right\}
\end{equation*}
will also be of full measure.

On~$\G \backslash \fS_0$ use the measure obtained in the usual
way from the unique (up to positive constant) positive $G$-invariant
measure on~$\fS_0$.  The following condition defines the new
measure, $d\dot{s}$, in terms of the old measure, $ds$:
$$
\int_{\Gamma \backslash \fS_0}
   \sum_{\gamma\in\Gamma}
      F(\gamma(s))
         \, d\dot{s}
   =
\int_{\fS_0}
   F(s)
      \, ds
$$
for any~$F\in C_c(\fS_0)$.  Since $\Gamma \backslash G$~is of
finite total measure, it follows that $\Gamma \backslash \fS_0$~is
too. Assume that this total measure is one.  One can easily  verify that relative to~$d\dot{s}$ the
distribution of $\G s_k$ is independent
of~$k$.  In fact, $\left| \{\G s \in \G \backslash \fS_0 ; \G s_k= b \}\right|= \frac{1}{\#A}$ for all $k \ge 0$.

 Let $0 < \epsilon < \frac{1}{\#A}$.  The monotone convergence theorem
implies that there exists~$K$ such that
$$
\left|\left\{
  \G s\in\G \backslash \fS_0 ; \text{$b\ne \G s_k$ for all $k \ge K$}
\right\}\right|
   >1-\epsilon.
$$
But this means that
$$
\left|\{\G s\in\G \backslash \fS_0 ; \G s_K=b \}\right|
   \le \epsilon.
$$
This contradicts $\left| \{\G s \in \G \backslash \fS_0 ; \G s_K= b \}\right|= \frac{1}{\#A}$,
and so proves the result.
\end{proof}

\begin{corollary}\label{H(3)+}
Let $\cB$ be the building of $G=\PGL_3(\KK)$, where $\KK$ is a local field of characteristic zero.
Let $\G$ be a lattice in $\PGL_3(\KK)$ which acts freely on the vertices of $\cB$.
Then the conclusions of Theorem~\ref{H(3)} hold.
\end{corollary}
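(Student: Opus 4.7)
The plan is to reduce the corollary to Theorem~\ref{H(3)} by lifting $\G$ to a lattice in $\SL_3(\KK)$. Let $\pi : \SL_3(\KK) \to \PGL_3(\KK)$ be the natural surjection; its kernel $Z = \mu_3(\KK)$ is finite, and its image $H = \pi(\SL_3(\KK))$ has finite index in $\PGL_3(\KK)$ (the index equals $|\KK^{\times}/(\KK^{\times})^3|$, which is finite because $\KK$ is a nonarchimedean local field of characteristic zero). Setting $\G_0 = \G \cap H$ therefore produces a finite-index subgroup of $\G$ which is still a lattice in $\PGL_3(\KK)$ and still acts freely on the vertices of $\cB$ with finitely many orbits.

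Next, set $\tilde\G = \pi^{-1}(\G_0) \subset \SL_3(\KK)$. Because $\pi$ has finite kernel, $\tilde\G$ is a lattice in $\SL_3(\KK)$. The kernel $Z$ acts trivially on $\cB$, so the $\tilde\G$-action on the tile space $\fT$ factors through $\G_0$ and the orbits of $\tilde\G$ on $\fT$ coincide with those of $\G_0$. In particular $A_0 := \tilde\G \backslash \fT = \G_0 \backslash \fT$ is finite, and the transition matrices associated to $\tilde\G$ coincide with those associated to $\G_0$.

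Now apply Theorem~\ref{H(3)} to $\tilde\G$. Strictly speaking $\tilde\G$ fails to act freely on vertices, since $Z \subset \tilde\G$ acts trivially; however, an inspection of the proof of Theorem~\ref{H(3)} shows that the free-action hypothesis is used only to ensure that the alphabet is a well-defined finite set, which is already secured here by the identification $A_0 = \G_0 \backslash \fT$. The proof proper uses only that $\tilde\G$ is a lattice in $\SL_3(\KK)$, the Howe--Moore property of $\SL_3(\KK)$, and the transitivity of $\SL_3(\KK)$ on type-preserving strip embeddings---all of which hold in our situation. Consequently the directed graph on $A_0$ determined by either transition matrix is irreducible.

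Finally, transfer irreducibility from $A_0$ to $A = \G \backslash \fT$. The inclusion $\G_0 \subset \G$ yields a canonical surjection $A_0 \twoheadrightarrow A$, and any edge in the $A_0$-graph witnessed by some $p \in \fP_{e_i}$ descends to an edge in the $A$-graph witnessed by the same $p$. Directed paths therefore descend, and irreducibility of the $A_0$-graph forces irreducibility of the $A$-graph. The principal obstacle will be justifying the application of Theorem~\ref{H(3)} to $\tilde\G$ despite the failure of the free-action hypothesis; this amounts to isolating where that hypothesis enters the original argument and checking that it is used only in the set-up of the alphabet rather than in the ergodic and measure-theoretic heart of the proof.
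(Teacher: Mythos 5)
Your proposal is correct and follows essentially the same route as the paper: the authors likewise pull $\G$ back to a lattice $\G'$ in $\SL_3(\KK)$, observe that the proof (not merely the statement) of Theorem~\ref{H(3)} applies there, and transfer irreducibility back using the fact that each $\G$-orbit of tiles is a union of $\G'$-orbits. Your explicit discussion of why the failure of freeness for the lifted lattice is harmless is a welcome elaboration of a point the paper leaves implicit.
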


\begin{proof}
The image of~$SL_3(\KK)$ in~$PGL_3(\KK)$ has finite index.  Let $\Gamma'$~be
the pullback to~$SL_3(\KK)$ of $\Gamma$.  Then
$\Gamma' \backslash SL_3(\KK)$ also has finite volume and the proof of
Theorem~\ref{H(3)} applies.  Moreover, the $\Gamma$-orbits of tiles
of~$\cB$ are made up of unions of $\Gamma'$-orbits.  So if we wish
to construct $s \in \fS_k$
having first and last tiles in certain $\Gamma$-orbits, we just
pick $\Gamma'$-orbits contained in the two $\Gamma$-orbits and
thereafter work with~$\Gamma'$.
\end{proof}

\begin{remark} We needed to use an indirect argument in the previous Corollary
because the Howe--Moore theorem does not apply in its simplest form to~$PGL_3(\KK)$.
\end{remark}
\begin{remark}
In work which will appear elsewhere, it will be shown how to extend
the methods of the proof of the Howe--Moore theorem so as to prove the
necessary ergodicity in greater generality.  It is enough to suppose
that $\G$~acts freely and with finitely many orbits on the vertices of
a thick building of type~$\wt A_2$.  Since ergodicity implies~(H2),
and since (H0), (H1), and~(H3) always hold, Theorem~\ref{main2} is
likewise true in this generality.

Not only does this allow one to work with the buildings associated to
$\PGL_3(\KK)$ when~$\KK$ has positive characteristic, but it also
makes available those buildings of type~$\wt A_2$ which are associated
to no linear group.  Note finally that direct combinatorial proofs
of~(H2) can be constructed for the $\wt A_2$~groups listed
in~\cite{cmsz}.

%The authors believe that it is possible to prove extensions to the
%Howe-Moore Theorem, from which analogues of Corollary~\ref{H(3)+} for
%more general groups may be deduced. Thus the irreducibility condition
%{\rm (H3)} should be true in much more generality. One particular
%class of groups for which condition {\rm (H3)} is valid, but which is
%not covered by Corollary~\ref{H(3)+} is the class of $\wt A_2$
%groups of \cite{cmsz}. In this case condition {\rm (H3)} can be proved
%by a direct combinatorial argument.
\end{remark}

\subsection{Examples of type~$\wt A_1 \times \wt A_1$}\label{A1A1}

Analogous results hold for groups acting on buildings of type
$\wt A_1 \times \wt A_1$.  Consider by way of
illustration a specific example studied in \cite{moz} and generalized
in \cite{bm}.  In \cite[Section 3]{moz}, there is constructed a
certain lattice subgroup $\G$ of $G = PGL_2(\QQ_p) \times
PGL_2(\QQ_q)$, where $p,q \equiv 1$ (mod 4) are two distinct
primes. The building $\D$ of $G$ is a product of two homogeneous trees
$T_1$, $T_2$, so that the chambers of $\D$ are squares and the
apartments are copies of the euclidean plane tesselated by squares.
If $a \in \G$ then there are automorphisms $a_1$, $a_2$ of $T_1$,
$T_2$, respectively such that $a(u,v)=(a_1 u,a_2 v)$ for each vertex
$(u,v)$ of $\D$.  However, even though each~$a\in\G$ is a direct
product of automorphisms, the group~$\G$ is not a direct product of
groups $\G_1$ and $\G_2$.

The group $\G$ acts freely and transitively on the vertices of $\D$.
The preceding results all extend to this situation. The tiles are now
squares instead of parallelograms. The boundary of $\D$ is defined as
before, using $\frac{\pi}{2}$-angled sectors.  The condition~(H1) is a
consequence of \cite [Theorem 3.2]{moz} and the irreducibility
condition~(H2) follows from \cite [Proposition 3]{moz'}.

\end{document}